\newtheorem{thm}{Theorem}[section]
\newtheorem{prop}[thm]{Proposition}
\newtheorem{lem}[thm]{Lemma}
\newtheorem{cor}[thm]{Corollary}
\newtheorem{clm}[thm]{Claim}
\theoremstyle{definition}
\newtheorem{dfn}[thm]{Definition}
\theoremstyle{remark}
\newtheorem{rem}[thm]{Remark}
\newtheorem{qst}[thm]{Question}
\numberwithin{equation}{section}
\newcommand{\R}{\ensuremath{\mathbb{R}}}
\newcommand{\X}{\mathcal{X}}
\newcommand{\cN}{\mathcal{N}}
\newcommand{\cP}{\mathcal{P}}
\newcommand{\cA}{\mathcal{A}}
\newcommand{\tX}{\tilde{X}}
\newcommand{\tY}{\tilde{Y}}
\newcommand{\cX}{\mathcal{X}}
\DeclareMathOperator{\supp}{supp}
\DeclareMathOperator{\diam}{diam}
\newcommand{\midd}{\mathrel{} \middle| \mathrel{}}
\newcommand{\kf}{d_{\mathrm{KF}}}
\newcommand{\prok}{d_{\mathrm{P}}}
\newcommand{\gp}{d_{\mathrm{GP}}}
\newcommand{\Lip}{{\mathcal{L}}ip}
\newcommand{\conc}{d_{\mathrm{conc}}}
\newcommand{\haus}{d_{\mathrm{H}}}
\newcommand{\tv}{d_{\mathrm{TV}}}
\newcommand{\Leb}{{\mathcal{L}}}
\DeclareMathOperator{\Sep}{Sep}
\DeclareMathOperator{\Fix}{Fix}
\newcommand{\ep}{\varepsilon}
\newcommand{\Dl}{\Delta}
\renewcommand{\phi}{\varphi}
\title[Principal bundle structure of $\X$]{Principal bundle structure\\
of the space of metric measure spaces}
\thanks{This work was supported by JSPS KAKENHI Grant Numbers JP22K20338, JP22K13908, JP19K03459.}
\author[D.~Kazukawa]{Daisuke Kazukawa}
\address{Faculty of Mathematics, Kyushu University, Fukuoka 819-0395, JAPAN}
\email{kazukawa@math.kyushu-u.ac.jp}
\author[H.~Nakajima]{Hiroki Nakajima}
\address{Mathematical Sciences Course, Ehime University, Matsuyama 790-8577, JAPAN}
\email{nakajima.hiroki.nz@ehime-u.ac.jp}
\author[T.~Shioya]{Takashi Shioya}
\address{Mathematical Institute, Tohoku University, Sendai 980-8578, JAPAN}
\email{shioya@math.tohoku.ac.jp}
\date{April 13, 2023}
\subjclass[2020]{53C23, 57R22}
\keywords{metric measure space, box distance, concentration topology, pyramid, principal bundle}
\begin{document}

\begin{abstract}
We study the topological structure of the space
$\cX$ of isomorphism classes of metric measure spaces
equipped with the box or concentration topologies.
We consider the scale-change action of the multiplicative
group $\R_+$ of positive real numbers on $\cX$,
which has a one-point metric measure space, say $*$,
as only one fixed-point.
We prove that the $\R_+$-action on $\cX_* := \cX \setminus \{*\}$
admits the structure of nontrivial and locally trivial
principal $\R_+$-bundle over the quotient space.
Our bundle $\R_+ \to \cX_* \to \cX_*/\R_+$ is
a curious example of a nontrivial principal fiber bundle
with contractible fiber.
A similar statement is obtained for the pyramidal compactification
of $\cX$, where we completely determine the structure
of the fixed-point set of the $\R_+$-action on the compactification.
\end{abstract}

\maketitle

\section{Introduction}

It is a challenging problem to study
the structure of the space $\cX$ of isomorphism classes
of metric measure spaces,
where we assume a metric measure space to be
a complete separable metric space with a Borel probability measure.
Denote by $\R_+$ the multiplicative group of positive real numbers.
We have the natural group action of $\R_+$ on $\cX$
defined as
\[
\R_+ \times \cX \ni (t,X) \longmapsto tX \in \cX,
\]
where $tX$ is the space $X$ with the $t$-scaled metric of $X$.
Note that the isomorphism class of a one-point metric measure
space, denoted as $*$, is the only fixed-point of this action.
Let $\cX_* := \cX \setminus \{*\}$ and
let $\Sigma$ denote the quotient space $\cX_*/\R_+$
equipped with the quotient topology.

As for the structure of the space $\cX$,
Sturm \cite{St} obtained
the remarkable result
that the subspace $\cX_{pq}$ of $\cX$
with finite $L^{pq}$-size and equipped with the $L^{p,q}$-distortion
metric is a nonnegatively curved Alexandrov space
isometric to a Euclidean cone for $p = 2$ and $q \in [\,1,+\infty\,)$.
He also determined geodesics in $\cX_{pq}$ for $p,q \in [\,1,+\infty\,)$,
and proved that any orbit of the $\R_+$-action is a geodesic ray,
which implies that $\cX_{pq}$ is homeomorphic to the cone over
$\Sigma_{pq}$ for any $p,q \in [\,1,+\infty\,)$,
where $\Sigma_{pq}$ is the subspace of $\Sigma$ corresponding to
$\cX_{pq}$.

Also, Ivanov and Tuzhilin \cite{IT} pointed out
that the Gromov-Hausdorff space
is homeomorphic to the cone over the quotient space
by the $\R_+$-action.

In this paper, we study the topological structure of $\cX$
with the box and observable metrics and also of the pyramidal compactification of $\cX$.
Those metrics and the pyramidal compactification are
fundamental concepts in the study of metric measure spaces,
originally introduced by Gromov \cite{Grmv} (see also \cite{MMG}).
The box metric is closely related with the $L^{0,q}$-distortion
metric (see \cite{St})
and coincides with the Gromov-Prokhorov metric (see \cite{GPW, Lohr}).
The observable metric is defined by the idea of
the concentration of measure phenomenon established by
L\'evy and Milman \cite{Levy, VMil} (see also \cite{Led})
and is useful to study convergence of
spaces with dimension divergent to infinity.
The topologies induced from the box and observable metrics
are called the box and concentration topologies, respectively.
In our previous paper \cite{KNS},
we have proved that the box metric is geodesic
and that $\cX$ is locally path connected and contractible
with respect to the box and concentration topologies.
However, the box and observable metrics are much different from
the $L^{p,q}$-distortion metric.
One of the essential differences is
that any orbit of the $\R_+$-action is never a geodesic ray
for the box and observable metrics.
Also, there are intricately branching geodesics
with respect to the box metric
and the Alexandrov curvature is not bounded neither from below
nor from above (see \cite{KNS}*{Remark 6.7}).
The question that arises here is:

\begin{itemize}
\item Is $\cX$ with the box and/or concentration topologies
homeomorphic to the cone over $\Sigma$?
\end{itemize}

Surprisingly the answer is negative.

\begin{thm} \label{thm:not-homeo}
For neither of the box nor concentration topologies,
$\cX_* = \cX \setminus \{*\}$
is not homeomorphic to $\R_+ \times \Sigma$,
and $\cX$ not homeomorphic to the cone over $\Sigma$.
\end{thm}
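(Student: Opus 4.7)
The plan is to prove both assertions by contradiction, deriving in each case a conflict with the nontriviality of the principal $\R_+$-bundle $\pi:\cX_*\to\Sigma$ stated in the abstract and to be established separately in this paper.

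I would focus first on the product statement. Suppose $\Phi:\cX_*\to\R_+\times\Sigma$ is a homeomorphism and set $h:=\pr_1\circ\Phi$. For each $X\in\cX_*$ the orbit map $t\mapsto h(tX)$ is continuous on $\R_+$. Since $tX\to *$ in $\cX$ as $t\to 0^+$ and $*\notin\cX_*$, the image $\Phi(tX)$ must exit every compact subset of $\R_+\times\Sigma$; a comparison of the box-metric neighborhood basis $\{Y:\kf(Y,*)<\ep\}$ of $*$ with the image of the product neighborhood basis under $\Phi$ identifies the exit direction as $h\to 0$. A dual argument using the lower bound $\kf(tX,*)\gtrsim\min(t\diam(X),1)$ gives $h(tX)\to\infty$ as $t\to\infty$. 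By the intermediate value theorem, each $\R_+$-orbit meets the slice $S:=h^{-1}(1)\cong\Sigma$.

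The crux is to show that each orbit meets $S$ in \emph{exactly} one point: then $\pi|_S:S\to\Sigma$ is a continuous bijection whose inverse is a continuous section of $\pi$, contradicting nontriviality. This unique-crossing property is the main obstacle, since $t\mapsto h(tX)$ may a priori oscillate and cross the level $\{h=1\}$ several times. I expect to handle it by exploiting the monotonicity of the intrinsic scale $t\mapsto\kf(tX,*)$ (respectively $\conc(tX,*)$ in the concentration case) along each orbit together with a comparison estimate linking it to $h$ near the slice; as a fallback, the section can be defined orbit-wise via $t_{\min}(X):=\inf\{t>0:h(tX)=1\}$, with continuity verified through a closed-graph / invariance-of-domain argument using the freeness and continuity of the $\R_+$-action.

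Finally, the cone statement is deduced from the product statement: if $\phi:\cX\to C\Sigma$ were a homeomorphism with $C\Sigma=(\Sigma\times[0,\infty))/(\Sigma\times\{0\})$, then $\phi(*)$ must be the cone vertex $v$, since only $v$ admits arbitrarily small neighborhoods homeomorphic to a cone on $\Sigma$ — a property that $*$, as the common limit of every scaled orbit in $\cX$, must inherit. Restricting $\phi$ then yields $\cX_*\cong \R_+\times\Sigma$, already ruled out.
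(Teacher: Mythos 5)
Your proposal has a fundamental circularity problem: you plan to contradict the nontriviality of the principal $\R_+$-bundle $\pi\colon\cX_*\to\Sigma$, which you describe as ``to be established separately in this paper.'' But the paper establishes nontriviality of that bundle precisely \emph{as a consequence of} Theorem~\ref{thm:not-homeo}: the proof of Theorem~\ref{thm:bundle} reads ``Theorem~\ref{thm:not-homeo} proves that the fiber bundle $\R_+\to\cX_*\to\Sigma$ is globally nontrivial.'' There is no independent argument for nontriviality anywhere in the paper, so your plan reverses the actual logical order and proves nothing.

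Even setting circularity aside, the mechanism you sketch has real gaps. First, an arbitrary homeomorphism $\Phi\colon\cX_*\to\R_+\times\Sigma$ has no reason to interact with the $\R_+$-action: bundle nontriviality rules out an $\R_+$-\emph{equivariant} product splitting (equivalently a global section), whereas Theorem~\ref{thm:not-homeo} asserts the nonexistence of \emph{any} homeomorphism, which is strictly stronger. To bridge the gap you would have to manufacture a section from a bare homeomorphism, and your route to this (showing $h(tX)\to 0$ as $t\to 0^+$ and $h(tX)\to\infty$ as $t\to\infty$, then IVT, then uniqueness of the crossing) does not hold up: $\Phi(tX)$ escaping every compact set does not force the first coordinate to go to $0$ or $\infty$ --- it could escape in the $\Sigma$-direction, and $\Sigma$ is far from compact or locally compact --- and you yourself flag that the unique-crossing step is the ``main obstacle'' without a real argument. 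The cone-vertex identification at the end has a similar issue: ``only the vertex admits arbitrarily small cone-like neighborhoods'' is not a general fact about cones over non-manifold spaces.

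The paper's own proof is both simpler and avoids all of this. It shows (Lemma~\ref{lem:Ury}, via Lemma~\ref{lem:nbd}) that $\Sigma$ is not a Urysohn space --- any two closed neighborhoods of distinct points must intersect, witnessed by the two-point spaces $Y_\varepsilon$ --- hence $\Sigma$ is not metrizable. But if $\cX_*\cong\Sigma\times\R_+$, then $\Sigma\cong\Sigma\times\{1\}$ embeds in the metric space $\cX_*$ and would be metrizable, a contradiction; the same reasoning handles the cone. This is a point-set topology obstruction, needing nothing about bundles, sections, or the $\R_+$-action beyond the definition of $\Sigma$. You should switch to this approach, or at minimum find a genuinely independent proof of bundle nontriviality before leaning on it.
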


One of our main theorems is stated as follows.

\begin{thm} \label{thm:bundle}
For the box and concentration topologies,
the action of $\R_+$ on $\cX_*$ admits
the structure of a nontrivial and locally trivial
principal $\R_+$-bundle over $\Sigma$.
\end{thm}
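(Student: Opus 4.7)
The plan is to establish Theorem \ref{thm:bundle} by combining three ingredients: freeness of the $\R_+$-action on $\cX_*$, the existence of continuous local sections of the orbit map $\pi \colon \cX_* \to \Sigma$, and Theorem \ref{thm:not-homeo} for nontriviality. Freeness follows from the assertion that $*$ is the unique fixed point of the action: a self-similarity $tX = X$ with $t \neq 1$ iterates to arbitrarily small scaling ratios and forces the probability measure to be a point mass, so $X = *$.

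For local trivializations, the key tool is a continuous $\R_+$-equivariant functional. Concretely, I would aim to construct, on a saturated open neighborhood $V \subset \cX_*$ of each orbit, a continuous map $\sigma \colon V \to \R_+$ satisfying $\sigma(tX) = t\, \sigma(X)$ and $\sigma > 0$ on $V$. The natural candidates scaling correctly are the partial diameter and Gromov's observable diameter $\operatorname{ObsDiam}(X; -\kappa)$; both scale linearly under the $\R_+$-action and both are well-studied under box and concentration convergence. Neither gives a global equivariant positive functional (they vanish on spaces with a dominant atom), but this is exactly what one expects given Theorem \ref{thm:not-homeo}.

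Given $X_0 \in \cX_*$, I would choose $\kappa_0 \in (0,1)$ so that $\kappa \mapsto \operatorname{ObsDiam}(X_0; -\kappa)$ is continuous at $\kappa_0$ and positive there; this is possible since the map is monotone (hence has at most countably many jumps) and $X_0 \neq *$. Standard semicontinuity properties of $\operatorname{ObsDiam}(\,\cdot\,; -\kappa_0)$ under the concentration topology (and hence the box topology, which is finer), together with continuity in the parameter at $\kappa_0$, then yield an open neighborhood $U_0$ of $X_0$ on which $\sigma := \operatorname{ObsDiam}(\,\cdot\,; -\kappa_0)$ is continuous and strictly positive. Saturating $U_0$ under $\R_+$ produces the desired $V$, and the map
\[
V \longrightarrow \R_+ \times \bigl(V \cap \sigma^{-1}(1)\bigr),
\qquad X \longmapsto \bigl(\sigma(X),\, \sigma(X)^{-1}X\bigr),
\]
with inverse $(t,Y) \mapsto tY$, is a continuous $\R_+$-equivariant bijection; joint continuity of the scale-change action (which I would verify in passing) makes both directions continuous, so this descends to a local trivialization over the open set $\pi(V) \subset \Sigma$. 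Nontriviality is then automatic: a global trivialization would give a homeomorphism $\cX_* \cong \R_+ \times \Sigma$, contradicting Theorem \ref{thm:not-homeo}.

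The hard part is Step 2: establishing continuity (not merely semicontinuity) of $\operatorname{ObsDiam}(\,\cdot\,;-\kappa_0)$ at $X_0$ for the box and concentration topologies, while keeping $\sigma > 0$ on a full neighborhood. Because a monotone function has only countably many discontinuities one has the freedom to tune $\kappa_0$, but exploiting this freedom uniformly on a neighborhood requires a refined comparison of observable and partial diameters under small box/concentration perturbations. This is where the real technical work is concentrated, and where the essential difference from the $L^{p,q}$-distortion setting of \cite{St} makes itself felt.
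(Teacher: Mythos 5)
Your overall strategy --- a continuous $1$-homogeneous positive functional $\sigma$ on a saturated open set yields a local trivialization via $X \mapsto (\sigma(X),\, \sigma(X)^{-1}X)$, with nontriviality from Theorem~\ref{thm:not-homeo} --- is exactly the paper's, but the step you yourself flag as ``the hard part'' contains a genuine gap, and the paper's resolution of it is the one thing your sketch is missing. You choose $\kappa_0$ to be a continuity point of $\kappa \mapsto \operatorname{ObsDiam}(X_0;-\kappa)$ and then assert that the semicontinuity bounds, together with continuity in $\kappa$ at $\kappa_0$, ``yield an open neighborhood $U_0$ of $X_0$ on which $\sigma := \operatorname{ObsDiam}(\,\cdot\,;-\kappa_0)$ is continuous.'' This does not follow: the Lemma~\ref{lem:partial}-type sandwich shows that $\sigma$ is continuous \emph{at} $X_0$, but nearby spaces $X$ may well have a jump in their profile exactly at $\kappa_0$, and then $\sigma$ fails to be continuous at those $X$. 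Continuity at a single point gives you neither a continuous local section nor a homeomorphism onto $\R_+\times(V\cap\sigma^{-1}(1))$; you need $\sigma$ continuous on a full neighborhood.

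The paper's fix is to integrate over the parameter rather than evaluating at one value of it. Fix $\Delta\in(0,1)$ and let $\cX_\Delta$ be the ($\R_+$-invariant, open) set of mm-spaces all of whose atoms have mass $<\Delta$; then set
\[
r_\Delta(X) := \int_0^{(\Delta+1)/2} \diam(X;s)\,ds.
\]
Lemma~\ref{lem:partial} says that if $X_n\to X$ in box topology then $\diam(X_n;s)\to\diam(X;s)$ for every $s$ at which the monotone profile $s\mapsto\diam(X;s)$ is continuous, i.e.\ for all but countably many $s$; combined with a uniform bound and dominated convergence this makes $r_\Delta$ continuous on \emph{all} of $\cX_\Delta$, not just at a single base point, and $r_\Delta>0$ there because $\diam(X;\Delta)>0$ on $\cX_\Delta$. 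Integration is precisely what converts ``continuous for all but countably many parameter values'' into ``continuous on the whole open set,'' and this is the idea your proposal needs but does not produce. The same device is used for the concentration case, though there the paper does not argue directly on $\cX$ as you propose: it proves the bundle structure for $\Pi_*\to\Pi_*/\R_+$ using $r_\kappa(\cP):=\int_0^1\Sep(\cP;\kappa_0+s,\dots,\kappa_N+s)\,ds$ (Theorem~\ref{thm:Fix} is needed to show the $\Pi_\kappa$ cover $\Pi_*$), and then obtains Theorem~\ref{thm:bundle} for the concentration topology by restricting that bundle to $\cX_*$.

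Two smaller remarks. First, the paper's explicit covering $\{\cX_\Delta\}_{0<\Delta<1}$ is cleaner than ad hoc saturated neighborhoods of individual orbits, and it is what makes the continuity argument go through uniformly. Second, for the principal-bundle structure itself (not just local triviality) you need more than freeness and joint continuity of the action: you need the translation-function property, i.e.\ that $X_n\to X$ and $t_nX_n\to tX$ force $t_n\to t$. The paper proves this (Propositions~\ref{prop:princ} and~\ref{prop:princ_py}) via the limit formula for the separation distance (Theorem~\ref{thm:lim_form}); your proposal gestures at ``joint continuity of the scale-change action'' but does not supply this step.
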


In general, a locally trivial principal bundle with contractible fiber
over a paracompact Hausdorff base space is trivial
(see \cite{D}*{Corollary 2.8} and \cite{Huse}*{8.1 Theorem of Chapter 4} for example),
which is not necessarily true if the base space is not paracompact.
It is remarkable that our principal bundle $\R_+ \to \cX_* \to \Sigma$
presents such a counterexample.

The action of $\R_+$ on $\cX$ naturally extends to
the pyramidal compactification of $\cX$, say $\Pi$
(see Section \ref{ssec:pyramid} for the definition of $\Pi$).
Denote by $\Fix(\Pi)$ the set of fixed-points of
the action of $\R_+$ on $\Pi$,
and put $\Pi_* := \Pi \setminus \Fix(\Pi)$.
We also have the following.

\begin{thm} \label{thm:bundle-Pi}
The action of $\R_+$ on $\Pi_*$ admits
the structure of a nontrivial and locally trivial
principal $\R_+$-bundle over the quotient space $\Pi_*/\R_+$.
\end{thm}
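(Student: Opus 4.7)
The plan is to adapt the construction from the proof of Theorem \ref{thm:bundle} to the pyramidal compactification. Freeness of the $\R_+$-action on $\Pi_*$ holds by the very definition $\Pi_* := \Pi \setminus \Fix(\Pi)$, so what remains is local triviality and nontriviality of the principal bundle $\R_+ \to \Pi_* \to \Pi_*/\R_+$.

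For local triviality, I would, for each $P_0 \in \Pi_*$, produce an $\R_+$-equivariant continuous \emph{scale functional} $\sigma$ of weight one (that is, $\sigma(tP) = t\sigma(P)$) defined on an $\R_+$-invariant open neighborhood $V$ of the orbit $\R_+ \cdot P_0$, with $\sigma(P) > 0$ for all $P \in V$. A natural candidate is an observable-diameter- or partial-diameter-type functional on pyramids, extended from $\cX$ to $\Pi$ using approximation by mm-spaces in the pyramidal topology. Its level set $\{P \in V : \sigma(P) = 1\}$ supplies a continuous local section $s$ of the orbit map $\pi : \Pi_* \to \Pi_*/\R_+$ over the open set $U := V/\R_+$, and the standard formula $(t, q) \mapsto t \cdot s(q)$ produces an $\R_+$-equivariant homeomorphism $U \times \R_+ \to \pi^{-1}(U)$. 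Non-vanishing of $\sigma$ near $P_0$ is precisely where the explicit description of $\Fix(\Pi)$ announced in the introduction enters: being outside $\Fix(\Pi)$ must guarantee a nonzero value of some scale-weight-one invariant.

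For nontriviality, I would use a pullback argument. Since $\cX_* \subset \Pi_*$ is $\R_+$-invariant, the quotient map descends to a natural map $\Sigma = \cX_*/\R_+ \to \Pi_*/\R_+$, which is injective (two mm-spaces lying in different $\R_+$-orbits in $\cX_*$ correspond to distinct pyramids in $\Pi$, hence to distinct $\R_+$-orbits in $\Pi_*$). The pullback of the principal $\R_+$-bundle $\Pi_* \to \Pi_*/\R_+$ along this map is, as a topological principal bundle, isomorphic to $\cX_* \to \Sigma$ itself, because the pullback total space $\{([X], P) \in \Sigma \times \Pi_* : P \in \R_+ \cdot X\}$ coincides set-theoretically and topologically with $\cX_*$. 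If the $\Pi_*$-bundle were trivial, its pullback would also be trivial, contradicting Theorem \ref{thm:bundle}.

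The main obstacle I expect is the continuous extension of the scale functional $\sigma$ to pyramids in $\Pi \setminus \cX$, together with the identification of its vanishing set with $\Fix(\Pi)$. The pyramidal compactification contains limit objects that need not correspond to honest mm-spaces of finite extent, so one must carefully analyse how observable- or partial-diameter-type functionals behave under pyramidal limits, relying on the classification of $\Fix(\Pi)$ developed earlier in the paper. Once that analysis is in place, the rest of the argument is a formal repetition of the bundle construction carried out for Theorem \ref{thm:bundle}.
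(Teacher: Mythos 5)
Your overall scheme matches the paper's: construct an $\R_+$-equivariant, $1$-homogeneous, continuous, positive functional on an $\R_+$-invariant open set and read off a local section. But the choice of invariant matters and you never commit to one that demonstrably works on pyramids. The paper does not extend partial or observable diameter to $\Pi$; it uses the separation distance of a pyramid (Definition \ref{dfn:sep}), letting $\Pi_\kappa$ consist of those $\cP$ with $\Sep(\cP;\kappa_0,\ldots,\kappa_N) < +\infty$ and $\Sep(\cP;\kappa_0+\delta,\ldots,\kappa_N+\delta) > 0$ for some $\delta > 0$, and taking $r_\kappa(\cP) := \int_0^1 \Sep(\cP;\kappa_0+s,\ldots,\kappa_N+s)\,ds$. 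Continuity of $r_\kappa$ and openness of $\Pi_\kappa$ then follow from the limit formula of Theorem \ref{thm:lim_form}. The technical core, which you flag but leave entirely open, is the covering $\Pi_* = \bigcup_\kappa \Pi_\kappa$: this is precisely the equivalence $(1)\Leftrightarrow(3)$ in Theorem \ref{thm:Fix}, namely that $\cP \notin \Fix(\Pi)$ forces $0 < \Sep(\cP;\kappa_0,\ldots,\kappa_N) < +\infty$ for some tuple, and its proof passes through the highly nontrivial implication $(3)\Rightarrow(4)$. A ``natural candidate'' left unverified at exactly this point is not a proof.

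Two further gaps. Freeness of the $\R_+$-action on $\Pi_*$ is not \emph{by definition}: deleting $\Fix(\Pi)$ removes only the points fixed by all of $\R_+$, and a priori a pyramid could satisfy $t\cP = \cP$ for some single $t\neq 1$ without lying in $\Fix(\Pi)$. Ruling this out is the content of $(2)\Rightarrow(1)$ in Theorem \ref{thm:Fix}, and it must be invoked, not assumed. Your nontriviality argument by pulling back along $\Sigma \hookrightarrow \Pi_*/\R_+$ is a legitimate alternative to the paper's direct route, which deduces $\Pi_* \not\cong (\Pi_*/\R_+)\times\R_+$ from the non-Urysohn property of $\Pi_*/\R_+$ in Corollary \ref{cor:Ury}; but you must cite Theorem \ref{thm:not-homeo} for the concentration topology rather than Theorem \ref{thm:bundle}. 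In the paper the concentration-topology case of Theorem \ref{thm:bundle} is itself derived from Theorem \ref{thm:bundle-Pi}, so invoking it here is circular; and since $\cX \hookrightarrow \Pi$ is a topological embedding only for the concentration topology, the box-topology case of Theorem \ref{thm:bundle} cannot be fed into your pullback.
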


We investigate the structure of $\Fix(\Pi)$
and have the following theorem.
Denote by $\cA$ the set of all monotone non-increasing sequences
of nonnegative real numbers with total sum not greater than one.

\begin{thm} \label{thm:Fix-A}
The fixed-point set $\Fix(\Pi)$ is homeomorphic to
$\cA$ with the $l^2$-weak topology.
\end{thm}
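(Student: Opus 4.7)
The plan is to construct an explicit continuous bijection $\Phi : \cA \to \Fix(\Pi)$ and then invoke compactness of $\cA$ in the $l^2$-weak topology together with Hausdorffness of $\Pi$ to promote it to a homeomorphism. Compactness of $\cA$ is immediate: the constraint $\sum_n a_n \leq 1$ combined with $a_n \leq 1$ yields $\|a\|_{l^2}^2 \leq \|a\|_{l^1} \leq 1$, and weak closedness in $l^2$ follows from coordinatewise convergence together with monotonicity and Fatou's lemma.

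The candidate construction of $\Phi(a) = \Pi_a$ is the \emph{infinitely-separated atomic limit} parameterized by $a$: for $n \geq 1$ and $R > 0$, let $X_{n,R}^a$ denote the finite mm-space of $n+1$ points with probability weights $a_1, a_2, \ldots, a_n, 1 - \sum_{i=1}^n a_i$ and all pairwise distances equal to $R$, and set
\[
\Pi_a := \lim_{n \to \infty}\ \overline{\bigcup_{R > 0} \cP(X_{n,R}^a)},
\]
where $\cP(Y)$ is the principal pyramid generated by $Y$, the inner union/closure is taken in $\Pi$, and the outer limit is a pyramidal limit as $n \to \infty$. Since scaling $X_{n,R}^a$ by $t$ produces $X_{n,tR}^a$, the inner union is manifestly $\R_+$-invariant; hence so is $\Pi_a$, i.e., $\Pi_a \in \Fix(\Pi)$.

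Continuity of $\Phi$ from $(\cA,l^2\text{-weak})$ to $(\Pi,\text{pyramidal})$ would follow from the fact that $l^2$-weak convergence $a^{(k)} \to a$ implies coordinatewise convergence and, under the uniform summability constraint, uniform control of tails, which is exactly what is needed to pass box-metric convergence $X_{n,R}^{a^{(k)}} \to X_{n,R}^a$ through the double limit. Injectivity follows by reading off the mass sequence $a$ from the ``atomic spectrum'' of $\Pi_a$: the heaviest atoms extractable from members of $\Pi_a$ are $a_1, a_2, \ldots$, while $\sum_n a_n$ is recovered from the asymptotic ``fraction not lost to infinity''.

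The main obstacle is surjectivity: given an arbitrary $P \in \Fix(\Pi)$, producing $a \in \cA$ with $\Pi_a = P$. The strategy is to choose an exhausting sequence $X_k \in P$, use the $\R_+$-invariance of $P$ to normalize by large scalings $t_k \to \infty$, and observe that any limit point of $t_k X_k$ must be a fully ``atomized'' space in which no two atoms can remain at finite distance. A careful analysis of the mass structure of such atomized limits should identify the limit with $\Pi_{a(P)}$ for some $a(P) \in \cA$, and then the identification $P = \Pi_{a(P)}$ is the deepest step, relying on the characterization of pyramids by their partial-diameter and observable-diameter profiles. Once $\Phi$ is shown to be a continuous bijection, compactness of $\cA$ and Hausdorffness of $\Pi$ complete the proof.
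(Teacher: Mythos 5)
Your high-level scheme (continuous bijection from compact $\cA$ into the Hausdorff space $\Pi$, promoted to a homeomorphism) is the same as the paper's, and your compactness argument for $\cA$ is fine. But the substance of the bijection and, above all, the argument for surjectivity diverge from the paper, and it is precisely at the hardest step that the proposal has a real gap.

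The paper does not build a ``double limit'' of equidistant finite spaces; it works directly with
\[
\cP_A := \left\{X\in\X \mid \exists\,\{x_i\}_{i=1}^\infty\subset X \text{ with } \textstyle\sum_{i=1}^\infty a_i\delta_{x_i}\leq \mu_X\right\},
\]
which is manifestly a pyramid and makes continuity and injectivity checkable by explicit estimates (an explicit separating test space for injectivity, a quantitative $\tv$-estimate for continuity). Your construction of $\Pi_a$ via $X_{n,R}^a$ is plausible but needs verification that it is even a pyramid (the $\square$-closure of a union of principal pyramids is not automatically closed under the Lipschitz order) and that it lands in $\Fix(\Pi)$; the $\R_+$-invariance of the inner union is clear, but you should check the closure step and the $n\to\infty$ step carry this along. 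Moreover, your continuity argument is only an assertion that weak convergence ``is exactly what is needed''; the paper's continuity proof is a delicate mass-rearrangement argument and is not immediate.

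The serious gap is surjectivity, which the paper explicitly flags as the nontrivial content (the implication $(3)\Rightarrow(4)$ of Theorem \ref{thm:Fix}). Your strategy --- take an exhausting sequence $X_k\in P$, rescale by $t_k\to\infty$, and hope that any limit is ``fully atomized'' with a readable mass sequence --- is not a proof and, as stated, is hard to make work. Scaling an arbitrary $X_k\in P$ by $t_k\to\infty$ does not in general produce a convergent (or atomizing) sequence in $\cX$; the associated pyramids $\cP_{t_k X_k}$ subconverge weakly, but their limit is an arbitrary subpyramid of $P$ and does not identify $P$. And even granting that some ``atomic spectrum'' emerges, you have no mechanism for proving $P\subset \Pi_{a(P)}$ and $P\supset\Pi_{a(P)}$. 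The paper handles this by passing to the inverse limit $Y_\infty=\varprojlim Y_n$ of an approximation of $P$, reading the sequence $A$ off the atoms of $\mu_{Y_\infty}$, and then proving both inclusions $\cP\subset\cP_A$ and $\cP\supset\cP_A$ --- the latter using a rather delicate combinatorial lemma (Lemma \ref{lem:dissip}) together with the dichotomy in condition $(3)$ that $\Sep(\cP;\cdots)\in\{0,+\infty\}$, which your sketch never uses. Without an argument at this level of precision, the surjectivity step is missing, and the proposal does not yet prove the theorem.
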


Let us mention the ideas of our proofs.

A key point of the proof of Theorem \ref{thm:not-homeo}
is to prove that $\Sigma$ is not a Urysohn space
(Lemma \ref{lem:Ury}).  If $\Sigma \times \R_+$ were to be
homeomorphic to $\cX_*$, then $\Sigma$ would be metrizable,
which is contrary to the non-Urysohn property of $\Sigma$.
It is quite delicate that $\Sigma$ is a Hausdorff space
(Proposition \ref{prop:Haus}).

For Theorems \ref{thm:bundle} and \ref{thm:bundle-Pi},
the local triviality of the bundles is a core of the proof.

For Theorem \ref{thm:bundle} with the box topology,
we construct an $\R_+$-invariant open covering
$\{\cX_\Delta\}_{\Delta \in (\,0,1\,)}$ of $\cX$
and continuous $1$-homogeneous functions
$r_\Delta \colon \cX_\Delta \to \R_+$ for $\Delta \in (\,0,1\,)$.
We define $\cX_\Delta$ to be the set of mm-spaces
such that any atom has measure less than $\Delta$,
and define $r_\Delta(X)$, $X \in \cX_\Delta$,
as the integral of the partial diameter $\diam(X;s)$
with respect to the parameter $s \in [\,0,(\Delta+1)/2\,]$.
The reason why we take the integral is for the sake of
the continuity of $r_\Delta$.
Using $r_\Delta$ we obtain a local trivialization
$\cX_\Delta \simeq \cX_\Delta/\R_+ \times \R_+$.

Theorem \ref{thm:bundle} for the concentration topology
is derived from Theorem \ref{thm:bundle-Pi}
just by restricting the base space $\Pi_*/\R_+$ to $\cX_*/\R_+$.

For the proof of Theorem \ref{thm:bundle-Pi},
we construct an $\R_+$-invariant open covering of $\Pi_*$
and continuous $1$-homogeneous functions on each open set
in the covering.
This time, for an $(N+1)$-tuple $\kappa = (\kappa_0,\dots,\kappa_N)$
of positive real numbers with $\sum_{i=0}^N \kappa_i < 1$,
we define $\Pi_\kappa$ to be the set of all $\cP \in \Pi$
such that  $\Sep(\cP;\kappa_0,\dots,\kappa_N) < +\infty$
and $\Sep(\cP;\kappa_0+\delta,\dots,\kappa_N+\delta) > 0$
for some $\delta > 0$, and
define $r_\kappa(\cP)$, $\cP \in \Pi_\kappa$, to be the integral of
$\Sep(\cP;\kappa_0+s,\dots,\kappa_N+s)$
with respect to $s \in [\,0,1\,]$
(see Definition \ref{dfn:sep} for the definition of $\Sep(\cdots)$).
These induce a local trivialization
$\Pi_\kappa \simeq \Pi_\kappa/\R_+ \times \R_+$.
However, it is not easy to prove that the union of all $\Pi_\kappa$
coincides with $\Pi_*$.
For the proof, we need to investigate the structure of pyramids
in $\Fix(\Pi)$ as follows.
For $A = \{a_i\}_{i=1}^\infty \in \cA$ we define
\[
\cP_A := \left\{X \in \X \midd \text{There exists a sequence $\{x_i\}_{i=1}^\infty \subset X$ such that } \sum_{i=1}^\infty a_i\delta_{x_i} \le \mu_X \right\}.
\]

\begin{thm} \label{thm:Fix}
For a given $\cP \in \Pi$,
the following {\rm (1)--(4)} are equivalent to each other.
\begin{enumerate}
\item $\cP \in \Fix(\Pi)$.
\item $t\cP = \cP$ for some $t \in \R_+$ with $t \neq 1$.
\item For any $\kappa_0,\dots,\kappa_N > 0$ with
$\sum_{i=0}^N \kappa_i < 1$,
the separation distance $\Sep(\cP;\kappa_0,\dots,\kappa_N)$
is either $0$ or $+\infty$.
\item There exists $A \in \cA$ such that
$\cP = \cP_A$.
\end{enumerate}
\end{thm}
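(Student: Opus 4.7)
The plan is to establish the cyclic chain $(1) \Rightarrow (2) \Rightarrow (3) \Rightarrow (4) \Rightarrow (1)$. Three of these implications are essentially formal. For $(1) \Rightarrow (2)$, one takes any $t \neq 1$. For $(2) \Rightarrow (3)$, one exploits the $1$-homogeneity of the separation distance: since $\Sep(t\cP;\vec{\kappa}) = t\,\Sep(\cP;\vec{\kappa})$, the hypothesis $t\cP = \cP$ with $t \neq 1$ forces each value $\Sep(\cP;\vec{\kappa}) = s$ to satisfy $s = ts$, hence $s \in \{0,+\infty\}$. For $(4) \Rightarrow (1)$, the defining condition of $\cP_A$ involves only the Borel measure of the ambient mm-space and not its metric, so $t\cP_A = \cP_A$ for every $t > 0$; the ancillary fact that $\cP_A$ indeed lies in $\Pi$ (is a directed, downward-closed family under the Lipschitz order) has to be verified separately, the downward closure coming from the identity $\varphi_{*}(\sum a_i \delta_{x_i}) = \sum a_i \delta_{\varphi(x_i)}$ for any $1$-Lipschitz measure-preserving $\varphi$, and directedness from constructing common upper bounds in $\cP_A$.

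The central content is $(3) \Rightarrow (4)$. The plan is to extract from $\cP$ a canonical sequence $A = (a_i)_{i \geq 1} \in \cA$ encoding its atomic skeleton, and then show $\cP = \cP_A$. The extraction must be handled delicately: a naive choice such as
\[
a_i := \sup\bigl\{\kappa > 0 \mid \Sep(\cP; \underbrace{\kappa,\dots,\kappa}_{i}) = +\infty\bigr\}
\]
is inaccurate in degenerate cases --- for $\cP = \cX$, where every separation is $+\infty$, it would yield $a_i = 1/i$, which is not summable, whereas the correct answer is $A \equiv 0$ (since $\cX = \cP_0$). A workable definition must distinguish genuine ``persistent'' atomic mass from mere capacity; one natural candidate is to take $a_i$ to be the supremum of $\kappa > 0$ such that $\Sep(\cP;\kappa,\dots,\kappa,\lambda,\dots,\lambda)$, with $i$ copies of $\kappa$ and an arbitrary number of small entries $\lambda$, remains $+\infty$ in the limit $\lambda \downarrow 0$. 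With such a definition, the bound $\sum a_i \leq 1$ is established by a combination argument: for every $N$ and $\varepsilon > 0$, directedness of $\cP$ furnishes a single $X^{*} \in \cP$ above witness spaces for each $a_i - \varepsilon$, $i \leq N$, and pulling back separated sets along the dominating $1$-Lipschitz measure-preserving maps produces $N$ pairwise disjoint subsets of $X^{*}$ of measures $\geq a_1 - \varepsilon, \dots, a_N - \varepsilon$, whence $\sum_{i=1}^{N}(a_i - \varepsilon) \leq 1$.

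The equality $\cP = \cP_A$ is then proved by two inclusions. For $\cP_A \subseteq \cP$: any $Y \in \cP_A$ is approximated by explicit configurations built from the witness spaces of $\Sep(\cP;\cdots) = +\infty$, and closedness of $\cP$ in the pyramidal compactification passes to the limit. For the harder inclusion $\cP \subseteq \cP_A$: given $X \in \cP$, combine $X$ with witness spaces via directedness to produce $Z \in \cP$ above $X$ carrying scattered atomic lumps of masses $\geq a_i - \varepsilon$; the rigid dichotomy (3) forces, in a suitable limit, these lumps to concentrate as genuine atoms, and downward closure transports the resulting atomic configuration back to $X$ itself as a weight distribution $\sum a_i \delta_{x_i} \leq \mu_X$, yielding $X \in \cP_A$.

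The principal obstacle is twofold: first, pinning down a definition of $a_i$ that is explicit enough to verify $A \in \cA$ yet tight enough to recover $\cP$ exactly as $\cP_A$, with the correct behavior on degenerate pyramids such as $\cX$; second, executing the transfer $\cP \subseteq \cP_A$, which requires turning global separation data on the pyramid $\cP$ into an atomic substructure inside any single $X \in \cP$, using directedness, closedness, and downward closure as the only available input.
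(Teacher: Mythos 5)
Your chain $(1)\Rightarrow(2)\Rightarrow(3)\Rightarrow(4)\Rightarrow(1)$ matches the paper's (which does $(4)\Rightarrow(1)\Rightarrow(2)\Rightarrow(3)\Rightarrow(4)$), and the three easy arrows are handled the same way. The divergence, and the gap, is entirely in $(3)\Rightarrow(4)$.

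You propose to read $A$ off from the separation distances of $\cP$ directly. You correctly flag that the naive definition fails for $\cP=\cX$, but your refined ``natural candidate'' still fails: take $\cP=\cP_A$ with $A=(\tfrac12,0,0,\dots)$, so that $\cP$ consists of all mm-spaces carrying an atom of mass $\ge\tfrac12$. For any $\kappa<1$, any $M$, and any $\lambda$ with $M\lambda\le 1-\kappa$, the finite space $X=\{p_0,\dots,p_M\}$ with $\mu_X(p_0)=\kappa$, $\mu_X(p_j)=(1-\kappa)/M$, and all mutual distances equal to $R$ lies in $\cP$ (since $\kappa\ge\tfrac12$) and gives
\[
\Sep(X;\kappa,\lambda,\dots,\lambda)\ge R,
\]
so $\Sep(\cP;\kappa,\lambda,\dots,\lambda)=+\infty$ for every $\kappa<1$ once $\lambda$ is small enough, with $M$ arbitrary. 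Your candidate then returns $a_1=1$, not the correct $\tfrac12$. The difficulty is real: the free (non-atomic) mass of the spaces in $\cP_A$ can always be piled onto one of the test sets, so the separation values alone do not cleanly ``deconvolve'' into the sequence $A$. Your own last paragraph names this as the principal obstacle, but the proposal does not overcome it.

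The paper sidesteps the deconvolution problem entirely. It takes an approximation $Y_1\prec Y_2\prec\cdots$ of $\cP$, forms the inverse limit $Y_\infty=\varprojlim Y_n$ with the projective-limit measure $\mu_{Y_\infty}$ (Choksi's theorem), and \emph{defines} $A$ to be the sorted sequence of masses of the atoms of $\mu_{Y_\infty}$; $\sum a_i\le 1$ is then automatic. Condition $(3)$ is not used to define $A$ at all. The two inclusions are also distributed differently than you expect: $\cP\subset\cP_A$ is the more routine one — push the atoms $y_i\in Y_\infty$ through $\pi_n$ and then through the approximating maps $f_n\colon Y_n\to X$, extract convergent subsequences, and pass to the limit with Fatou — and does not invoke $(3)$. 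It is $\cP_A\subset\cP$ that uses $(3)$: a dissipation lemma (Lemma~\ref{lem:dissip}) shows $\Sep(Y_n;a_1,\dots,a_k,\kappa_0,\dots,\kappa_N)>0$ for suitable small $\kappa_i$'s, whence $\Sep(\cP;\dots)>0$, and $(3)$ upgrades this to $+\infty$; the limit formula (Theorem~\ref{thm:lim_form}) then yields widely separated configurations inside some $Y_n$, from which an arbitrary finite $X\in\cP_A$ is dominated. To fix your argument you would either need to adopt this inverse-limit construction, or find a genuinely working separation-theoretic formula for $a_i$ — which, because of the free-mass issue above, would have to be considerably subtler than a single limit in $\lambda$.
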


In Theorem \ref{thm:Fix},
the implication `$(3) \Rightarrow (4)$' is highly nontrivial
and we need a delicate discussion to prove it.
Theorem \ref{thm:Fix} with a little discussion implies
that the union of $\Pi_\kappa$ coincides with $\Pi_*$.

Theorem \ref{thm:Fix-A} is derived from
Theorem \ref{thm:Fix} and the following.

\begin{thm} \label{thm:A}
The map $\cA \ni A \longmapsto \cP_A \in \Pi$
is an into homeomorphism.
\end{thm}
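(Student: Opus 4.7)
The plan is to exhibit $\Phi \colon A \longmapsto \cP_A$ as a continuous injection from $\cA$ into $\Pi$, observe that $\cA$ is compact in the $l^2$-weak topology while $\Pi$ is Hausdorff, and conclude that $\Phi$ is automatically a closed embedding. Thus the work reduces to two tasks: compactness of $\cA$ together with continuity of $\Phi$, and injectivity of $\Phi$.

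First I would verify that $(\cA,l^2\text{-weak})$ is compact. Every $A = \{a_i\} \in \cA$ satisfies $a_i \le 1/i$, so $\|A\|_{l^2}^2 \le a_1 \sum_i a_i \le 1$, and $\cA$ is a bounded subset of $l^2$. The closed unit ball of $l^2$ is weakly compact and weakly metrizable (Banach--Alaoglu plus separability of $l^2$), so it suffices to check that $\cA$ is weakly closed. But weak convergence $A_n \to A$ implies coordinatewise convergence, which preserves nonnegativity and monotonicity; and the sum condition $\sum_i a_i \le 1$ passes to the limit via partial sums (for each fixed $N$, $\sum_{i \le N} a_i = \lim_n \sum_{i \le N} a_{n,i} \le 1$, and now let $N \to \infty$). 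On the bounded set $\cA$, the $l^2$-weak topology coincides with the coordinatewise topology.

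Next, for injectivity I would exploit Theorem~\ref{thm:Fix}, part~$(3)$: the fixed-point pyramid $\cP_A$ is entirely determined by the zero/infinity pattern of $\Sep(\cP_A;\kappa_0,\dots,\kappa_N)$ as $(\kappa_0,\dots,\kappa_N)$ varies. A direct computation — by placing the prescribed atoms into $N+1$ well-separated clusters and using the continuous residual of mass $1 - \sum_i a_i$ to fill any deficits — yields the criterion that $\Sep(\cP_A;\kappa_0,\dots,\kappa_N) = +\infty$ if and only if there is an assignment $\sigma \colon \N \to \{0,\dots,N,\infty\}$ with $\sum_{j=0}^N (\kappa_j - A_j)^+ \le 1 - \sum_i a_i$, where $A_j := \sum_{\sigma(i)=j} a_i$. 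From this combinatorial description one extracts each coordinate $a_k$ by specializing to families like $\kappa_0 = \dots = \kappa_{k} = t$ with $t$ varying, so that $\cP_A = \cP_{A'}$ forces $a_k = a_k'$ for every $k$.

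For continuity of $\Phi$ at $A$, let $A_n \to A$ coordinatewise. I would prove Kuratowski convergence $\cP_{A_n} \to \cP_A$ in the pyramid topology in both directions. For $\cP_A \subseteq \liminf \cP_{A_n}$: given $X \in \cP_A$ with witnessing atoms at $x_i$ of mass $\ge a_i$, construct $X_n \in \cP_{A_n}$ by locally perturbing the atom weights at $x_i$ using $\max(a_{n,i}-a_i,0)$ of nearby mass (or trimming, when $a_{n,i} < a_i$); since $\sum_i |a_{n,i}-a_i| \to 0$ on uniformly bounded monotone sequences with controlled tail, this produces a box-convergent sequence $X_n \to X$. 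For $\limsup \cP_{A_n} \subseteq \cP_A$: if $X_{n_k} \in \cP_{A_{n_k}}$ converges in box distance to $X$, a diagonal extraction from witnessing sequences in each $X_{n_k}$, together with the lower semicontinuity of atom weights under box convergence and $a_{n,i} \to a_i$, produces witnessing points for $A$ in $X$.

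The main obstacle is the injectivity step: the combinatorial problem of characterising which tuples $(\kappa_0,\dots,\kappa_N)$ realise infinite separation for $\cP_A$, and in particular selecting a family of test tuples rich enough to read off each entry $a_k$ despite the freedom to redistribute the residual mass $1 - \sum_i a_i$ across the $N+1$ clusters. The $l^2$-weak topology on $\cA$ (rather than, say, the $l^1$ topology) is pinned down precisely because injectivity plus compactness forces $\Phi$ to be an embedding only when the source carries exactly this weak topology.
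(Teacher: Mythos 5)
Your overall framework --- a continuous injection from the compact space $\cA$ into the Hausdorff space $\Pi$ is automatically an embedding --- is the same as the paper's, and your compactness observation is fine. However, both of your substantive steps have gaps. For injectivity you try to recover $A$ from the zero/infinity pattern of $\Sep(\cP_A;\kappa_0,\dots,\kappa_N)$: this requires first proving your proposed combinatorial characterization of when the separation is infinite, and then proving that this pattern actually pins down each $a_k$ despite the freedom to redistribute atoms among clusters and to pad with the continuous residual $1-\sum_i a_i$. Neither is carried out, and the second is genuinely delicate. The paper's route is far shorter: it simply exhibits, for $A\neq A'$ with $a_k<a_k'$ at the first disagreement, the mm-space $([0,1],\lvert\cdot\rvert,\sum_i a_i\delta_{2^{-i}}+(1-\sum_i a_i)\Leb^1)$ and checks by a one-line monotonicity argument that it lies in $\cP_A\setminus\cP_{A'}$.

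For continuity, your forward (liminf) direction invokes the claim that $\sum_i\lvert a_{n,i}-a_i\rvert\to 0$ when $A_n\to A$ in the $l^2$-weak topology on $\cA$. That is false: with $A_n=(\underbrace{1/n,\dots,1/n}_{n},0,\dots)$ one has $\lVert A_n\rVert_{l^2}\to 0$ (so $A_n\to 0$ weakly), yet $\sum_i a_{n,i}=1$ for all $n$. Weak (even norm) $l^2$ convergence on $\cA$ only controls coordinatewise limits; mass can escape into ever more, ever smaller atoms, and there is no uniform $l^1$ control. The paper's proof of the same step sidesteps this by fixing $\ep>0$, a cutoff $k$ with $a_{k+1}<\ep$, matching only the first $k$ atomic weights, and distributing the remaining $a_{n,i}$'s together with the continuous residual by an explicit bin-packing over the finitely many points of the approximating space; the resulting total-variation error is then $O(N\ep)$, $N$ being the number of points. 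Some replacement of this kind is needed where you appeal to $l^1$ convergence. Your backward (limsup) direction and the diagonal extraction are in line with the paper's.
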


Theorem \ref{thm:A} is also proved by Esaki-Kazukawa-Mitsuishi
\cite{EKM} independently.  Our proof is simpler than \cite{EKM}.
It is proved in \cite{EKM} that the weak topology on $\cA$
coincides with the $l^\infty$-topology.

The organization of this paper is as follows.
After the preliminaries section,
we study in Section \ref{sec:scale-change}
the scale-change action of $\R_+$ on $\cX_*$.
We prove that $\Sigma$ is not Urysohn,
which leads to Theorem \ref{thm:not-homeo}.
We also prove Theorems \ref{thm:bundle} and \ref{thm:bundle-Pi}
with the help of Theorem \ref{thm:Fix}.
In Section \ref{sec:inv-pyramid},
we determine the structure of pyramids in $\Fix(\Pi)$
and prove Theorems \ref{thm:Fix} and \ref{thm:A}
to obtain Theorem \ref{thm:Fix-A}.
In the final Section \ref{sec:Questions},
we present several questions.

\section{Preliminaries}
In this section, we describe the definitions and some properties of metric measure space, the box distance, the observable distance, pyramid, and the weak topology. We use most of these notions along \cite{MMG}. As for more details, we refer to \cite{MMG} and \cite{Grmv}*{Chapter 3$\frac{1}{2}_+$}.

\subsection{Metric measure spaces}
Let $(X, d_X)$ be a complete separable metric space and $\mu_X$ a Borel probability measure on $X$. We call the triple $(X, d_X, \mu_X)$ a {\it metric measure space}, or an {\it mm-space} for short. We sometimes say that $X$ is an mm-space, in which case the metric and the measure of $X$ are respectively indicated by $d_X$ and $\mu_X$.

\begin{dfn}[mm-Isomorphism]
Two mm-spaces $X$ and $Y$ are said to be {\it mm-isomorphic} to each other if there exists an isometry $f \colon \supp{\mu_X} \to \supp{\mu_Y}$ such that $f_* \mu_X = \mu_Y$, where $f_* \mu_X$ is the push-forward measure of $\mu_X$ by $f$. Such an isometry $f$ is called an {\it mm-isomorphism}. Denote by $\mathcal{X}$ the set of mm-isomorphism classes of mm-spaces.
\end{dfn}

Note that an mm-space $X$ is mm-isomorphic to $(\supp{\mu_X}, d_X , \mu_X)$. We assume that an mm-space $X$ satisfies
\begin{equation*}
X = \supp{\mu_X}
\end{equation*}
unless otherwise stated.

\begin{dfn}[Lipschitz order]
Let $X$ and $Y$ be two mm-spaces. We say that $X$ ({\it Lipschitz}) {\it dominates} $Y$ and write $Y \prec X$ if there exists a $1$-Lipschitz map $f \colon X \to Y$ satisfying $f_* \mu_X = \mu_Y$. We call the relation $\prec$ on $\X$ the {\it Lipschitz order}.
\end{dfn}

The Lipschitz order $\prec$ is a partial order relation on $\X$.

\subsection{Box distance and observable distance}
For a subset $A$ of a metric space $(X, d_X)$ and for a real number $r > 0$, we set
\[
U_r(A) :=  \{x \in X \mid d_X(x, A) < r\},
\]
where $d_X(x, A) := \inf_{a \in A} d_X(x, a)$.

\begin{dfn}[Prokhorov distance]
The {\it Prokhorov distance} $\prok(\mu, \nu)$ between two Borel probability measures $\mu$ and $\nu$ on a metric space $X$ is defined to be the infimum of $\varepsilon > 0$ satisfying
\[
\mu(U_\varepsilon(A)) \geq \nu(A) - \varepsilon
\]
for any Borel subset $A \subset X$.
\end{dfn}

The Prokhorov metric $\prok$ is a metrization of the weak convergence of Borel probability measures on $X$ provided that $X$ is a separable metric space.

\begin{dfn}[Ky Fan metric]
Let $(X, \mu)$ be a measure space and $(Y, d_Y)$ a metric space. For two $\mu$-measurable maps $f,g \colon X \to Y$, we define $\kf^\mu (f, g)$ to be the infimum of $\varepsilon \geq 0$ satisfying
\begin{equation*}
\mu(\{x \in X \mid d_Y(f(x),g(x)) > \varepsilon \}) \leq \varepsilon.
\end{equation*}
The function $\kf^\mu$ is a metric on the set of $\mu$-measurable maps from $X$ to $Y$ by identifying two maps if they are equal to each other $\mu$-almost everywhere. We call $\kf^\mu$ the {\it Ky Fan metric}.
\end{dfn}

\begin{dfn}[Parameter]
Let $I := [0,1)$ and let $X$ be an mm-space. A map $\varphi \colon I \to X$ is called a {\it parameter} of $X$ if $\varphi$ is a Borel measurable map such that
\begin{equation*}
\varphi_\ast \mathcal{L}^1 = \mu_X,
\end{equation*}
where $\mathcal{L}^1$ is the one-dimensional Lebesgue measure on $I$.
\end{dfn}

Note that any mm-space has a parameter (see \cite{MMG}*{Lemma 4.2}).

\begin{dfn}[Box distance]
We define the {\it box distance} $\square(X, Y)$ between two mm-spaces $X$ and $Y$ to be the infimum of $\varepsilon \geq 0$ satisfying that there exist parameters $\varphi \colon I \to X$, $\psi \colon I \to Y$, and a Borel subset $I_0 \subset I$ with $\mathcal{L}^1(I_0) \geq 1 - \varepsilon$ such that
\begin{equation*}
|d_X(\varphi(s), \varphi(t)) - d_Y(\psi(s), \psi(t))| \leq \varepsilon
\end{equation*}
for any $s,t \in I_0$.
\end{dfn}

\begin{thm}[\cite{MMG}*{Theorem 4.10}]
The box distance function $\square$ is a complete separable metric on $\mathcal{X}$.
\end{thm}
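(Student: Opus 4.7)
The statement bundles several distinct claims: that $\square$ is a pseudometric (symmetry and the triangle inequality), that $\square(X,Y)=0$ forces $X$ to be mm-isomorphic to $Y$, that $(\X,\square)$ is complete, and that it is separable. Symmetry is immediate from the definition by swapping the roles of $\varphi$ and $\psi$. For the triangle inequality, suppose $\square(X,Y)<\alpha$ and $\square(Y,Z)<\beta$ are realized by parameter pairs $(\varphi_X,\psi_Y)$ and $(\varphi_Y,\psi_Z)$; the two parameters $\psi_Y,\varphi_Y\colon I\to Y$ of the same space need to be reconciled. I would therefore first establish a re-parametrization lemma: for any two Borel parameters of an mm-space there is a measure-preserving Borel map $\sigma\colon I\to I$ that identifies them $\mathcal{L}^1$-almost everywhere. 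Precomposing $(\varphi_Y,\psi_Z)$ with $\sigma$ aligns both pairs on the same parameterization of $Y$, after which the two $\varepsilon$-estimates combine to give $\square(X,Z)\le\alpha+\beta$ up to an exceptional set of total $\mathcal{L}^1$-measure at most $\alpha+\beta$.

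For the identification $\square(X,Y)=0\Rightarrow X\simeq Y$, I would choose parameter pairs $(\varphi_n,\psi_n)$ with errors $\varepsilon_n\to 0$ and form the couplings $\pi_n:=(\varphi_n,\psi_n)_\ast\mathcal{L}^1$ on $X\times Y$. Their marginals are $\mu_X,\mu_Y$, which are tight as Borel probability measures on Polish spaces, so $\{\pi_n\}$ is tight and a subsequence converges weakly to a coupling $\pi$ by Prokhorov's theorem. The uniform metric-matching condition forces $\supp\pi$ to be the graph of an isometry $\supp\mu_X\to\supp\mu_Y$ pushing $\mu_X$ to $\mu_Y$, which is the required mm-isomorphism.

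For completeness, I would take a $\square$-Cauchy sequence $(X_n)$, pass to a subsequence with $\square(X_n,X_{n+1})<2^{-n}$, and apply the re-parametrization lemma iteratively to produce aligned parameters $\varphi_n\colon I\to X_n$. The pseudo-metrics $d_n(s,t):=d_{X_n}(\varphi_n(s),\varphi_n(t))$ then form a Cauchy sequence of measurable functions on a subset of full $\mathcal{L}^1\times\mathcal{L}^1$-measure in $I\times I$; the almost-everywhere limit $d_\infty$ defines a limit mm-space $X_\infty$ as the completion of $I/\{d_\infty=0\}$ equipped with the push-forward of $\mathcal{L}^1$, and a direct estimate yields $\square(X_n,X_\infty)\to 0$. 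Separability is handled separately, by approximating an arbitrary $X$ in $\square$-distance by a finite mm-space obtained from a measurable partition of $X$ into pieces of small diameter with rational masses and rational pairwise distances; the countable collection of such rational data is $\square$-dense in $\X$.

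The crux of the argument, invoked in the triangle inequality and again in completeness, is the re-parametrization lemma. For atomless mm-spaces it is a routine consequence of Rokhlin's isomorphism theorem for standard probability spaces. In the presence of atoms, however, one must match discrete masses carefully while keeping $\sigma$ Borel measurable, and this is where most of the technical difficulty lies; a clean formulation handling both cases uniformly is what makes the whole proof run.
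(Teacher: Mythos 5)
Your decomposition into sub-claims (pseudometric properties, identity of indiscernibles via couplings, completeness, separability) and the overall route are sound, and the coupling/Prokhorov argument for $\square(X,Y)=0\Rightarrow X\simeq Y$ as well as the rational-finite-space approximation for separability are fine. But the re-parametrization lemma on which you hang both the triangle inequality and completeness is false in the exact form you state, and the failure has nothing to do with atoms. Take $Y=(I,\mathcal{L}^1)$, $\psi=\id$, and $\varphi(t)=2t\bmod 1$; both are parameters of the atomless space $Y$, yet no measure-preserving Borel $\Phi\colon I\to I$ can satisfy $\varphi\circ\Phi=\psi$ $\mathcal{L}^1$-a.e. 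Indeed, such a $\Phi$ would satisfy $\Phi(t)\in\{t/2,\ t/2+1/2\}$ a.e.; writing $A=\{\Phi(t)=t/2\}$, the pushforward condition forces $\mathcal{L}^1(A\cap C)=\tfrac12\mathcal{L}^1(C)$ for every Borel $C\subset I$, which is impossible (take $C=A$). The obstruction is that two parameters can have incompatible fibre structures (here: injective versus two-to-one), and Rokhlin's theorem gives isomorphisms of the ambient Lebesgue spaces but not maps intertwining two given factor maps to $Y$.

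What is true, and what the standard proof actually uses, is a quantitative version: for any $\ep>0$ there is a measure-preserving Borel $\Phi\colon I\to I$ with $\kf^{\mathcal{L}^1}(\varphi\circ\Phi,\psi)\le\ep$ (equivalently, $\varphi\circ\Phi$ and $\psi$ land in the same piece of a prescribed $\ep$-fine Borel partition of $Y$ off a set of measure $<\ep$). Its proof partitions $Y$ into countably many sets of diameter $<\ep$, pulls back to two countable partitions of $I$ with matching masses, and then applies Rokhlin piecewise; this is where Rokhlin legitimately enters, and it is also where atoms (and, more to the point, the impossibility of exact fibre matching) are absorbed into the $\ep$-slack. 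With this $\ep$-lemma your triangle-inequality estimate $\square(X,Z)\le\alpha+\beta$ and the iterative alignment in the completeness argument both go through, the latter because choosing $\square(X_n,X_{n+1})<2^{-n}$ makes the accumulated Ky Fan errors summable. So the gap is localized and repairable, but as written the key lemma is wrong even in the atomless case you describe as routine.
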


Various distances equivalent to the box distance are defined and studied, for example, the Gromov-Prokhorov distance introduced by Greven-Pfaffelhuber-Winter \cite{GPW}.

\begin{thm}[\cite{Lohr}*{Theorem 3.1}, \cite{MMG}*{Remark 4.16}]
For any two mm-spaces $X$ and $Y$, we have
\[
\square(X, Y) = \gp((X, 2d_X, \mu_X), (Y, 2d_Y, \mu_Y)),
\]
where $\gp(X,Y)$ is the Gromov-Prokhorov metric defined to be the infimum of $\prok(\mu_X, \mu_Y)$ for all metrics on the disjoint union of $X$ and $Y$ that are extensions of $d_X$ and $d_Y$. In particular,
\[
\gp(X, Y) \leq \square(X, Y) \leq 2\gp(X, Y).
\]
\end{thm}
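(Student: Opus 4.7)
My plan is to prove the equality $\square(X,Y) = \gp((X,2d_X,\mu_X),(Y,2d_Y,\mu_Y))$ as two matching inequalities, and then derive the ``in particular'' comparison from elementary scaling properties of the Prokhorov distance. Write $2X := (X, 2d_X, \mu_X)$ and $2Y := (Y, 2d_Y, \mu_Y)$ for brevity. A central reusable tool is Strassen's characterization: $\prok_d(\mu_X, \mu_Y) \leq \varepsilon$ is equivalent to the existence of a Borel coupling $\pi$ of $\mu_X$ and $\mu_Y$ with $\pi(\{(x,y): d(x,y) > \varepsilon\}) \leq \varepsilon$.

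First I would prove $\square(X,Y) \leq \gp(2X, 2Y)$. Given $\varepsilon > \gp(2X, 2Y)$, pick a metric $d$ on $X \sqcup Y$ extending $2d_X, 2d_Y$ with $\prok_d(\mu_X, \mu_Y) \leq \varepsilon$; by Strassen there is a coupling $\pi$ on $X \times Y$ with $\pi(\{d > \varepsilon\}) \leq \varepsilon$. Since $\pi$ is a Borel probability measure on the Polish space $X \times Y$, it admits a parametrization $(\varphi, \psi): I \to X \times Y$ with $(\varphi, \psi)_* \mathcal{L}^1 = \pi$, and the marginals $\varphi, \psi$ are parameters of $X, Y$. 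Set $I_0 := \{s \in I : d(\varphi(s), \psi(s)) \leq \varepsilon\}$, a Borel set of measure $\geq 1 - \varepsilon$. For $s, t \in I_0$, the triangle inequality in $d$ gives
\[
2 d_X(\varphi(s), \varphi(t)) \leq d(\varphi(s), \psi(s)) + 2 d_Y(\psi(s), \psi(t)) + d(\psi(t), \varphi(t)) \leq 2 d_Y(\psi(s), \psi(t)) + 2\varepsilon,
\]
and symmetrically, so $|d_X(\varphi(s), \varphi(t)) - d_Y(\psi(s), \psi(t))| \leq \varepsilon$, and $\square(X,Y) \leq \varepsilon$.

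For the reverse inequality $\gp(2X, 2Y) \leq \square(X, Y)$, fix $\varepsilon > \square(X, Y)$ with witnessing parameters $\varphi, \psi$ and Borel $I_0 \subset I$ of measure $\geq 1 - \varepsilon$. I would build the required extension by the standard ``correspondence'' recipe: let $R := \overline{\{(\varphi(s), \psi(s)) : s \in I_0\}} \subset X \times Y$ and define
\[
d(x, y) := \inf_{(x', y') \in R} \bigl( 2 d_X(x, x') + \varepsilon + 2 d_Y(y', y) \bigr) \qquad (x \in X,\ y \in Y),
\]
with $d = 2 d_X$ on $X \times X$, $d = 2 d_Y$ on $Y \times Y$, and symmetrically on $Y \times X$. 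Symmetry and the cross-species triangle inequalities are immediate from the infimum definition; the only subtle point is that the distortion bound $|d_X(x_1, x_2) - d_Y(y_1, y_2)| \leq \varepsilon$ on $R$ is precisely what makes the ``two-hops through the other species'' inequality work, e.g.\ $d(x, y) + d(y, x') \geq 2 d_X(x, x')$ reduces, after using $d_Y(y_1, y) + d_Y(y, y_2) \geq d_Y(y_1, y_2) \geq d_X(x_1, x_2) - \varepsilon$, to a chain in $2d_X$. In particular the restrictions of $d$ to $X$ and $Y$ remain $2d_X$ and $2d_Y$. Since $(\varphi(s), \psi(s)) \in R$ gives $d(\varphi(s), \psi(s)) \leq \varepsilon$ for $s \in I_0$, the coupling $\pi := (\varphi, \psi)_* \mathcal{L}^1$ satisfies $\pi(\{d > \varepsilon\}) \leq \varepsilon$, and Strassen yields $\prok_d(\mu_X, \mu_Y) \leq \varepsilon$, hence $\gp(2X, 2Y) \leq \varepsilon$.

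The ``in particular'' part follows from the sandwich $\gp(X, Y) \leq \gp(2X, 2Y) \leq 2\gp(X, Y)$. If $d$ witnesses $\gp(X, Y) \leq \varepsilon$, then $2d$ extends $2d_X, 2d_Y$ and $U^{2d}_{2\varepsilon} = U^d_\varepsilon$ yields $\prok_{2d}(\mu_X, \mu_Y) \leq 2\varepsilon$, giving $\gp(2X, 2Y) \leq 2\gp(X, Y)$. Conversely, if $d$ extends $2d_X, 2d_Y$ with $\prok_d(\mu_X, \mu_Y) \leq \varepsilon$, then $d/2$ extends $d_X, d_Y$ and $U^{d/2}_\varepsilon = U^d_{2\varepsilon} \supset U^d_\varepsilon$ forces $\prok_{d/2}(\mu_X, \mu_Y) \leq \varepsilon$, giving $\gp(X, Y) \leq \gp(2X, 2Y)$. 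I expect the main obstacle to be the bookkeeping in the reverse-direction triangle inequalities and in ensuring that the extended $d$ does not collapse the restricted metrics on $X$ and $Y$; these are standard but fiddly, and taking the closure in $R$ is what keeps the infimum defining $d(x,y)$ well-behaved.
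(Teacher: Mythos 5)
The paper does not prove this theorem; it cites it as a known result from L\"ohr and from Shioya's book, so there is no internal proof to compare against. Your blind proof is correct and gives a reasonable self-contained account of the result, by essentially the standard route: one direction via Strassen's characterization of the Prokhorov distance applied to a coupling on $X \times Y$, and the reverse direction via a ``correspondence'' metric extension of $2d_X \sqcup 2d_Y$ across $R = \overline{\{(\varphi(s),\psi(s)) : s \in I_0\}}$. The subtle points are handled correctly: taking the closure in $R$ preserves the distortion bound $|d_X(x',x'') - d_Y(y',y'')| \le \varepsilon$ by continuity, and this bound is exactly what makes the cross-species triangle inequality $d(x_1,y) + d(y,x_2) \ge 2d_X(x_1,x_2)$ go through, so the extension does not collapse the restricted metrics. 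The parametrization $(\varphi,\psi)_* \mathcal{L}^1 = \pi$ of a Borel coupling on a Polish space, the passage from $\pi(\{d>\varepsilon\}) \le \varepsilon$ to $\prok_d \le \varepsilon$, and the scaling computations $U^{2d}_{2\varepsilon} = U^d_\varepsilon$ and $U^{d/2}_\varepsilon = U^d_{2\varepsilon}$ for the ``in particular'' sandwich are all sound. One very minor remark: you may as well assume $\varepsilon < 1$ throughout (both $\square$ and $\gp$ are bounded by $1$), which guarantees $I_0$, and hence $R$, is nonempty in the reverse direction; you implicitly use this when defining the infimum over $R$.
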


The topology induced from the box distance has historically various names, for example, the weak-Gromov topology.
However we call it the {\it box topology} in this paper.

The total variation distance is useful for estimating the box distance.

\begin{dfn}[Total variation distance]
The {\it total variation distance} $\tv(\mu, \nu)$ of two Borel probability measures $\mu$ and $\nu$ on a topological space $X$ is defined by
\[
\tv(\mu, \nu) := \sup_A{|\mu(A) - \nu(A)|},
\]
where $A$ runs over all Borel subsets of $X$.
\end{dfn}

If $\mu$ and $\nu$ are both absolutely continuous with respect to a Borel measure $\omega$ on $X$, then
\[
\tv(\mu, \nu)=\frac{1}{2}\int_X \left|\frac{d\mu}{d\omega} - \frac{d\nu}{d\omega}\right| \, d\omega,
\]
where $\frac{d\mu}{d\omega}$ is the Radon-Nikodym derivative of $\mu$ with respect to $\omega$.

\begin{prop}[\cite{MMG}*{Proposition 4.12}]
For any two Borel probability measures $\mu$ and $\nu$ on a complete separable metric space $X$, we have
\[
\square((X,\mu), (X,\nu)) \leq 2\prok(\mu,\nu) \leq 2\tv(\mu,\nu).
\]
\end{prop}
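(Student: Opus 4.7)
The plan is to prove the two inequalities separately.

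For the right-hand inequality $\prok(\mu,\nu) \le \tv(\mu,\nu)$, I would argue directly from the definitions. Given $\varepsilon > \tv(\mu,\nu)$ and any Borel set $A \subset X$, the inclusion $A \subset U_\varepsilon(A)$ together with $|\mu(A) - \nu(A)| \le \tv(\mu,\nu) < \varepsilon$ yields $\mu(U_\varepsilon(A)) \ge \mu(A) \ge \nu(A) - \varepsilon$, whence $\prok(\mu,\nu) \le \varepsilon$; letting $\varepsilon \searrow \tv(\mu,\nu)$ finishes this half.

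For the main inequality $\square((X,\mu),(X,\nu)) \le 2\prok(\mu,\nu)$, I would fix $\varepsilon > \prok(\mu,\nu)$ and invoke Strassen's theorem, which applies since $X$ is Polish: it produces a Borel probability measure (coupling) $\pi$ on $X \times X$ with marginals $\mu$ and $\nu$ satisfying
\[
\pi\bigl(\{(x,y) \in X \times X : d_X(x,y) > \varepsilon\}\bigr) \le \varepsilon.
\]
Because $(X \times X, \pi)$ is a Borel probability space on a Polish space, the existence of parameters applied to it furnishes a Borel map $\Phi \colon I \to X \times X$ with $\Phi_\ast \mathcal{L}^1 = \pi$. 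Setting $\varphi := \pr_1 \circ \Phi$ and $\psi := \pr_2 \circ \Phi$ then gives parameters of $(X,\mu)$ and $(X,\nu)$, respectively.

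Next, define $I_0 := \{t \in I : d_X(\varphi(t),\psi(t)) \le \varepsilon\}$, which is Borel with $\mathcal{L}^1(I_0) = \pi(\{(x,y) : d_X(x,y) \le \varepsilon\}) \ge 1 - \varepsilon \ge 1 - 2\varepsilon$. For any $s, t \in I_0$, the reverse triangle inequality yields
\[
|d_X(\varphi(s),\varphi(t)) - d_X(\psi(s),\psi(t))| \le d_X(\varphi(s),\psi(s)) + d_X(\varphi(t),\psi(t)) \le 2\varepsilon,
\]
so $\square((X,\mu),(X,\nu)) \le 2\varepsilon$, and letting $\varepsilon \searrow \prok(\mu,\nu)$ completes the proof.

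The only nontrivial ingredient is Strassen's coupling theorem, which turns the one-sided measure-comparison condition defining $\prok$ into a genuine transport plan concentrated near the diagonal. Once the coupling is in hand, the rest of the argument is pure bookkeeping: a Borel parametrization of $\pi$ by the unit interval converts the coupling into a pair of parameters, and the triangle inequality transfers closeness of points into small distortion of pairwise distances.
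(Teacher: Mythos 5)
Your proof is correct and follows essentially the same route as the cited reference \cite{MMG}: the right-hand inequality is the immediate definition chase $A\subset U_\varepsilon(A)$, and the left-hand inequality is obtained by converting $\prok(\mu,\nu)<\varepsilon$ into a near-diagonal coupling $\pi$ via Strassen's theorem, parametrizing $(X\times X,\pi)$ by the unit interval, and projecting to get parameters $\varphi,\psi$ with $\kf^{\mathcal{L}^1}(\varphi,\psi)\le\varepsilon$, from which the reverse triangle inequality gives distortion $\le 2\varepsilon$ on a set of measure $\ge 1-\varepsilon\ge 1-2\varepsilon$. All steps check out, including the use of Lemma 4.2 of \cite{MMG} to parametrize the coupling.
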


Given an mm-space $X$ and a parameter $\varphi \colon I \to X$ of $X$, we set
\[
\varphi^* \Lip_1(X) := \{ f \circ \varphi \mid f \colon X \to \R \text{ is $1$-Lipschitz} \},
\]
which consists of Borel measurable functions on $I$.

\begin{dfn}[Observable distance]
We define the {\it observable distance} $\conc(X, Y)$ between two mm-spaces $X$ and $Y$ by
\begin{equation*}
\conc(X, Y) := \inf_{\varphi, \psi} \haus(\varphi^* \Lip_1(X), \psi^* \Lip_1(Y)),
\end{equation*}
where $\varphi \colon I \to X$ and $\psi \colon I \to Y$ run over all parameters of $X$ and $Y$ respectively, and $\haus$ is the Hausdorff distance with respect to the metric $\kf^{\mathcal{L}^1}$.
\end{dfn}

\begin{thm}[\cite{MMG}*{Proposition 5.5 and Theorem 5.13}]
The observable distance function $\conc$ is a metric on $\mathcal{X}$. Moreover, for any two mm-spaces $X$ and $Y$,
\[
\conc(X, Y) \leq \square(X, Y).
\]
\end{thm}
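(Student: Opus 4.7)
The statement has two parts, and I would attack them in the order that lets the second feed back into the first.

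For the inequality $\conc(X,Y)\le\square(X,Y)$, my plan is as follows. Fix $\varepsilon>\square(X,Y)$ and pick parameters $\varphi\colon I\to X$, $\psi\colon I\to Y$ and a Borel set $I_0\subset I$ with $\Leb(I_0)\ge 1-\varepsilon$ on which $|d_X(\varphi(s),\varphi(t))-d_Y(\psi(s),\psi(t))|\le\varepsilon$. Given a 1-Lipschitz $f\colon X\to\R$, the natural thing is to define $h\colon \psi(I_0)\to\R$ by $h(\psi(s)):=f(\varphi(s))$. Whenever $\psi(s)=\psi(s')$ with $s,s'\in I_0$, the box condition forces $d_X(\varphi(s),\varphi(s'))\le\varepsilon$, so $h$ is well defined up to an additive error $\varepsilon$; a similar computation gives $|h(\psi(s))-h(\psi(s'))|\le d_Y(\psi(s),\psi(s'))+\varepsilon$ on $\psi(I_0)$. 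A McShane-type extension (after a bounded shift) then yields a 1-Lipschitz $g\colon Y\to\R$ with $|g\circ\psi-f\circ\varphi|\le C\varepsilon$ on $I_0$, hence $\kf^{\Leb}(g\circ\psi,f\circ\varphi)\le C\varepsilon$. The same construction with the roles of $X$ and $Y$ swapped gives $\haus(\varphi^*\Lip_1(X),\psi^*\Lip_1(Y))\le C\varepsilon$, and a careful tracking of the constants (or a small modification of the extension) reduces $C$ to $1$.

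For the fact that $\conc$ is a metric, non-negativity, symmetry, and $\conc(X,X)=0$ are immediate from taking the same parameter on both sides. The essential step is the triangle inequality. The key observation I would exploit is that, for any measure-preserving Borel map $\tau\colon I\to I$, precomposition with $\tau$ is an isometry of $(L^0(I,Y),\kf^{\Leb})$, so $\haus(\varphi^*\Lip_1(X),\psi^*\Lip_1(Y))$ is unchanged when $(\varphi,\psi)$ is replaced by $(\varphi\circ\tau,\psi\circ\tau)$. Given near-optimal pairs $(\varphi,\psi_1)$ for $\conc(X,Y)$ and $(\psi_2,\chi)$ for $\conc(Y,Z)$, I would use a standard Borel isomorphism argument (two parameters of the same mm-space are related by a measure-preserving map of $I$, up to a.e.\ modifications) to align $\psi_1$ and $\psi_2$ to a common parameter $\psi$ of $Y$, modifying $\varphi$ and $\chi$ accordingly. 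The triangle inequality for the Hausdorff distance on the common ambient metric space then yields $\conc(X,Z)\le\conc(X,Y)+\conc(Y,Z)$ after taking infima. The main obstacle here is the alignment step; this is where separability of $Y$ and the Rokhlin-type structure of standard probability spaces enter, and care is needed when $\mu_Y$ has atoms.

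Finally, for the separation axiom, I would assume $\conc(X,Y)=0$ and extract parameters $\varphi_n,\psi_n$ such that every element of $\varphi_n^*\Lip_1(X)$ is $\kf^{\Leb}$-approximated by an element of $\psi_n^*\Lip_1(Y)$ and vice versa. Choosing a countable family of 1-Lipschitz functions on $X$ that separates points (say distances to a countable dense subset of $X$), a diagonal extraction yields a correspondence between the two pull-back sets that is essentially an isometry in measure. Combined with the inequality $\square\le C\conc$ (which one would prove separately, or alternatively use the Gromov reconstruction theorem), this upgrades to an mm-isomorphism $X\cong Y$. I expect the technically hardest point of the whole argument to be the alignment of parameters in the triangle inequality, as it is the one place where the definition of $\conc$, which is symmetric between $X$ and $Y$, has to be combined across three spaces.
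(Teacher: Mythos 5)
The paper does not prove this theorem; it cites \cite{MMG}*{Proposition 5.5 and Theorem 5.13}. So the comparison is with what a correct proof would have to look like, and there is one genuine error in your proposal.

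Your argument for $\conc(X,Y)\le\square(X,Y)$ is the right idea. However, the step ``define $h$ on $\psi(I_0)$, note it is well-defined up to an additive error $\varepsilon$, then apply a McShane-type extension'' is not quite a proof: you cannot extend a function that is only well-defined up to error. The clean way to get exactly $C=1$ is to skip $h$ and set $g(y):=\sup_{s\in I_0}\bigl(f(\varphi(s))-d_Y(y,\psi(s))\bigr)$, which is automatically $1$-Lipschitz on $Y$; the box condition then gives $0\le g\circ\psi-f\circ\varphi\le\varepsilon$ on $I_0$, and since $\Leb(I\setminus I_0)\le\varepsilon$ this yields $\kf^{\Leb}(g\circ\psi,f\circ\varphi)\le\varepsilon$ directly, with no constant to chase down. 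The triangle-inequality sketch via realignment of parameters by measure-preserving self-maps of $I$ is also the standard route, and you are right that the alignment lemma (two parameters of the same mm-space can be matched in $\kf^{\Leb}$ after composing with a measure-preserving transformation) is the crux and needs care.

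The serious error is in your proposal for the separation axiom. You suggest closing the argument with ``the inequality $\square\le C\conc$ (which one would prove separately).'' No such inequality holds, even locally near a given space: the box topology is strictly finer than the concentration topology, and the gap is the whole point of L\'evy--Milman concentration. For the unit round spheres $S^n$ with normalized volume one has $\conc(S^n,*)\to 0$ as $n\to\infty$, while $\square(S^n,*)$ stays bounded away from $0$ because no set of measure $\ge 1/2$ in $S^n$ has small diameter. Hence $\square\le C\conc$ is false, and the approach built on it collapses. The separation axiom $\conc(X,Y)=0\Rightarrow X\cong Y$ genuinely needs Gromov's mm-reconstruction theorem (that the class $\Lip_1(X)$, viewed through a parameter, determines $X$ up to mm-isomorphism); you mention this only parenthetically as an alternative, but it is in fact the only viable route, and it is the hard part of \cite{MMG}*{Theorem 5.13}.
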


We call the topology on $\X$ induced from $\conc$ the {\it concentration topology}.
We say that a sequence $\{X_n\}_{n=1}^\infty$ of mm-spaces  {\it concentrates} to an mm-space $X$ if $X_n$ $\conc$-converges to $X$ as $n \to \infty$.

\subsection{Pyramid} \label{ssec:pyramid}
\begin{dfn}[Pyramid] \label{dfn:pyramid}
A subset $\cP \subset \X$ is called a {\it pyramid} if it satisfies the following {\rm(1) -- (3)}.
\begin{enumerate}
\item If $X \in \cP$ and if $Y \prec X$, then $Y \in \cP$.
\item For any $Y, Y' \in \cP$, there exists $X \in \cP$ such that $Y \prec X$ and $Y' \prec X$.
\item $\cP$ is nonempty and $\square$-closed.
\end{enumerate}
We denote the set of all pyramids by $\Pi$. Note that Gromov's definition of a pyramid is only by (1) and (2). The condition (3) is added in \cite{MMG}.

For an mm-space $X$, we define
\begin{equation*}
\cP_X := \left\{Y \in \X \midd Y \prec X \right\},
\end{equation*}
which is a pyramid. We call $\cP_X$ the {\it pyramid associated with $X$}.
\end{dfn}

We observe that $Y \prec X$ if and only if $\cP_Y \subset \cP_X$. Note that $\X$ itself is a pyramid.

We define the weak convergence of pyramids as follows. This is exactly the Kuratowski-Painlev\'e convergence as closed subsets of $(\X, \square)$ (see \cite{KNS}*{Section 8}).

\begin{dfn}[Weak convergence]
Let $\cP$ and $\cP_n$, $n = 1, 2, \ldots$, be pyramids. We say that $\cP_n$ {\it converges weakly to} $\cP$ as $n \to \infty$ if the following (1) and (2) are both satisfied.
\begin{enumerate}
\item For any mm-space $X \in \cP$, we have
\begin{equation*}
\lim_{n \to \infty} \square(X, \cP_n) = 0.
\end{equation*}
\item For any mm-space $X \in \X \setminus \cP$, we have
\begin{equation*}
\liminf_{n \to \infty} \square(X, \cP_n) > 0.
\end{equation*}
\end{enumerate}
\end{dfn}

\begin{thm}[\cite{MMG}*{Section 6}]\label{thm:weak}
There exists a metric $\rho$ on $\Pi$ such that the following {\rm (1) -- (4)} hold.
\begin{enumerate}
\item $\rho$ is compatible with weak convergence.
\item $\Pi$ is $\rho$-compact.
\item The map
$\X \ni X \mapsto \cP_X \in \Pi$
is a $1$-Lipschitz topological embedding map with respect to $\conc$ and $\rho$.
\item The image of $\X$ is $\rho$-dense in $\Pi$.
\end{enumerate}
\end{thm}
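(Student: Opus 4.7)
My plan is to follow Gromov's construction, metrizing $\Pi$ by viewing pyramids as $\square$-closed subsets of $\X$ and embedding them into a sequence space through their distance profiles. Fix a countable $\square$-dense set $\{X_n\}_{n=1}^\infty \subset \X$, and with $\square(X, \cP) := \inf_{Y \in \cP} \square(X, Y)$ set
\[
\rho(\cP, \cP') := \sum_{n=1}^\infty 2^{-n}\,\bigl|\min\{1,\square(X_n,\cP)\} - \min\{1,\square(X_n,\cP')\}\bigr|.
\]
This is a pseudometric; it separates points because $X \mapsto \square(X, \cP)$ is $1$-Lipschitz and $\cP = \{X \in \X \mid \square(X,\cP) = 0\}$, so a $\square$-closed set is determined by its distance function on any $\square$-dense set.

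For (1), I would show that $\rho(\cP_n, \cP) \to 0$ is equivalent to weak convergence. The forward direction: $\rho$-convergence gives pointwise convergence $\square(X_k, \cP_n) \to \square(X_k, \cP)$ for all $k$, and by the Lipschitz estimate $|\square(X, \cP_n) - \square(X', \cP_n)| \leq \square(X, X')$ together with density of $\{X_k\}$ this extends to every $X \in \X$, which unpacks to both clauses of the weak convergence definition. The converse follows from dominated convergence on the series. For (2), given a sequence $\{\cP_k\}\subset\Pi$, a diagonal extraction yields a subsequence along which every $\min\{1, \square(X_n, \cP_k)\}$ converges; define $\cP$ as the $\square$-closure of $\{X \in \X \mid \lim_k \square(X, \cP_k) = 0\}$. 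Downward-closure of $\cP$ follows from monotonicity of $\square(\,\cdot\,, \cP_k)$ in the Lipschitz order, and directedness of $\cP$ must be inherited from that of each $\cP_k$; these checks identify $\cP$ as a pyramid and the $\rho$-limit of the subsequence.

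For (3), the $1$-Lipschitz estimate reduces via the defining series to $|\square(X_n, \cP_X) - \square(X_n, \cP_Y)| \leq \conc(X, Y)$, which in turn rests on the fact that any $Z \prec X$ admits a partner $Z' \prec Y$ with $\square(Z, Z')$ controlled by $\conc(X, Y)$; the topological embedding property rests on the fundamental theorem in Gromov's theory that $\cP_{X_n} \to \cP_X$ weakly implies $\conc(X_n, X) \to 0$. For (4), the directedness of any $\cP \in \Pi$ produces a countable chain $Y_1 \prec Y_2 \prec \cdots$ in $\cP$ with $\cP = \overline{\bigcup_n \cP_{Y_n}}^{\square}$, so $\cP_{Y_n} \to \cP$ weakly. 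The main obstacle is (2), specifically the preservation of directedness in the limit: given $Y, Y' \in \cP$, one must select dominators $Z_k \in \cP_k$ of suitable approximations of $Y, Y'$ whose sequence is $\square$-precompact, so that a limit point $Z \in \cP$ exists and dominates both. Arranging this precompactness by first reducing to dominators coming from the countable dense family, rather than arbitrary $Y, Y' \in \cP$, is the delicate point, and the $\square$-closure step in the definition of $\cP$ exists precisely to absorb the resulting approximation error.
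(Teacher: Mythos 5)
The theorem you are proving is stated in the paper only as a citation to \cite{MMG}*{Section 6}; the paper gives no proof of its own, so the comparison has to be with Shioya's construction. Your route is a Wijsman-type metric (distance profiles against a countable dense set), which is genuinely different from the construction in \cite{MMG} (which truncates each pyramid to compact slices of $\X$ and compares these by Hausdorff distance, precisely so that compactness is available where it is needed). Your approach is a natural idea, but as written it has two real gaps.

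The first is in the converse half of (1). Weak convergence of pyramids is Kuratowski--Painlev\'e convergence; your metric $\rho$ encodes Wijsman-type convergence, i.e.\ pointwise convergence of $\square(X_k,\cP_n)\to\square(X_k,\cP)$. These two modes coincide in proper metric spaces, but $(\X,\square)$ is not proper, so the implication ``weak $\Rightarrow$ pointwise'' is not automatic. The easy half ($\limsup_n\square(X,\cP_n)\le\square(X,\cP)$) follows from clause (1) of weak convergence and density; but the other inequality $\liminf_n\square(X,\cP_n)\ge\square(X,\cP)$ requires showing that if $Y_n\in\cP_n$ with $\square(X,Y_n)$ bounded below $\square(X,\cP)$, one can replace $Y_n$ by elements $W_n\in\cP_n$ lying in a fixed $\square$-compact set with $\square(X,W_n)$ comparably small, so that a subsequential limit produces a contradiction via the Kuratowski upper-limit clause. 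That step rests on a nontrivial ``approximate common quotient'' lemma (roughly: $\square(X,Y)<\ep$ yields some $W\prec Y$ with $\square(X,W)$ controlled and $W$ confined to a compact slice), which you invoke implicitly under the words ``dominated convergence.'' The second gap is the one you flag yourself in (2): directedness of the limit set. Your suggested mitigation --- picking dominators from the countable dense family --- does not by itself give precompactness of the sequence $\{Z_k\}$ of dominators of $Y_k,Y'_k$, since directedness of $\cP_k$ only hands you \emph{some} $Z_k\in\cP_k$ with no a priori size control. Again the missing ingredient is a precompactness statement for suitably truncated pieces of pyramids, and this is precisely what the truncated-Hausdorff construction in \cite{MMG} is designed to supply. (A minor point: the $\min\{1,\cdot\}$ in your definition of $\rho$ is superfluous since $\square\le 1$ on $\X\times\X$.)
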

In particular, $(\Pi,\rho)$ is a compactification of $(\X,\conc)$.
We call $(\Pi,\rho)$ the {\it pyramidal compactification} of $(\X,\conc)$.
We often identify $X$ with $\cP_X$, and we say that a sequence of mm-spaces {\it converges weakly} to a pyramid if the associated pyramid converges weakly.

\begin{dfn}[Approximation of a pyramid]
A sequence $\{Y_m\}_{m = 1}^\infty$ of mm-spaces is called an {\it approximation} of a pyramid $\cP$ provided that it satisfies
\[
Y_1 \prec Y_2 \prec \cdots \prec Y_m \prec \cdots \quad \text{ and } \quad \overline{\bigcup_{m = 1}^\infty \cP_{Y_m}}^{\, \square} = \cP.
\]
In particular, $\{Y_m\}_{m=1}^\infty$ converges weakly to $\cP$ as $m \to \infty$ and $Y_m \in \cP$ for all $m$.
\end{dfn}

It is known that any pyramid $\cP$ admits an approximation (see \cite{MMG}*{Lemma 7.14}).

\subsection{Separation distance}

The separation distance is one of the most fundamental invariants of an mm-space and a pyramid.

\begin{dfn}[Separation distance] \label{dfn:sep}
Let $X$ be an mm-space. For any real numbers $\kappa_0,\kappa_1,\ldots,\kappa_N > 0$ with $N \geq 1$, we define the {\it separation distance}
\[
\Sep(X;\kappa_0,\kappa_1,\ldots,\kappa_N)
\]
of $X$ as the supremum of $\min_{i\neq j} d_X(A_i,A_j)$
over all sequences of $N+1$ Borel subsets $A_0,A_1,\ldots,A_N \subset X$ satisfying $\mu_X(A_i) \geq \kappa_i$ for all $i=0,1,\ldots,N$. If $\kappa_i > 1$ for some $i$, then we define $\Sep(X;\kappa_0,\kappa_1,\ldots,\kappa_N) := 0$.
Moreover, we define the {\it separation distance of a pyramid} $\cP$ by
\[
\Sep(\cP;\kappa_0,\kappa_1,\ldots,\kappa_N):= \lim_{\delta \to 0+}\sup_{X \in \cP} \Sep(X;\kappa_0-\delta,\kappa_1-\delta,\ldots,\kappa_N-\delta) \ (\leq +\infty).
\]
\end{dfn}

The separation distance for mm-spaces is an invariant under mm-isomorphism. Note that
\[
\Sep(\cP_X;\kappa_0,\kappa_1,\ldots,\kappa_N) = \Sep(X;\kappa_0,\kappa_1,\ldots,\kappa_N)
\]
for any $\kappa_0,\kappa_1,\ldots,\kappa_N > 0$ and that $\Sep(\cP;\kappa_0,\kappa_1,\ldots,\kappa_N)$ is monotone non-increasing and left-continuous in $\kappa_i$ for each $i = 0,1,\ldots,N$, and that
\[
\Sep(\cP;\kappa_0,\kappa_1,\ldots,\kappa_N) \leq \Sep(\cP';\kappa_0,\kappa_1,\ldots,\kappa_N) \quad \text{if } \cP \subset \cP'.
\]

\begin{thm}[\cite{OS}*{Theorem 1.1}, Limit formula for separation distance]\label{thm:lim_form}
Let $\cP$ and $\cP_n$, $n = 1, 2, \ldots$, be pyramids. If $\cP_n$ converges weakly to $\cP$ as $n \to \infty$, then
\begin{align*}
\Sep(\cP;\kappa_0,\kappa_1,\ldots,\kappa_N) &= \lim_{\ep \to 0+}\liminf_{n \to \infty} \Sep(\cP_n;\kappa_0-\ep,\kappa_1-\ep,\ldots,\kappa_N-\ep) \\
& = \lim_{\ep \to 0+}\limsup_{n \to \infty} \Sep(\cP_n;\kappa_0-\ep,\kappa_1-\ep,\ldots,\kappa_N-\ep)
\end{align*}
for any $\kappa_0,\kappa_1,\ldots,\kappa_N > 0$.
\end{thm}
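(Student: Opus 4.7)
The plan is to prove the two inequalities
\begin{align*}
\Sep(\cP;\kappa) &\le \lim_{\ep \to 0+}\liminf_{n \to \infty} \Sep(\cP_n;\kappa-\ep), \qquad\text{(A)}\\
\lim_{\ep \to 0+}\limsup_{n \to \infty} \Sep(\cP_n;\kappa-\ep) &\le \Sep(\cP;\kappa), \qquad\text{(B)}
\end{align*}
where I abbreviate $\kappa - \ep := (\kappa_0 - \ep,\dots,\kappa_N - \ep)$; combined with $\liminf \le \limsup$, (A) and (B) give the two equalities in the statement. The common tool is a semi-continuity lemma for mm-spaces, which I would prove first: if $\square(X_n, X) < \eta_n \to 0$, then the parameter couplings realizing the box distance let any $(N+1)$-tuple of Borel sets witnessing separation in either space be transferred to the other with measures and pairwise distances reduced by at most $\eta_n$. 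This produces both $\Sep(X_n;\kappa-\eta_n) \ge \Sep(X;\kappa) - \eta_n$ and $\Sep(X;\kappa-\eta_n) \ge \Sep(X_n;\kappa) - \eta_n$, from which the monotonicity and left-continuity of $\Sep$ in each coordinate yield the mm-space version of the theorem,
\[
\Sep(X;\kappa) = \lim_{\ep \to 0+}\liminf_n \Sep(X_n;\kappa-\ep) = \lim_{\ep \to 0+}\limsup_n \Sep(X_n;\kappa-\ep).
\]

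For (A), I fix $\eta > 0$ and $\delta > 0$ and use the definition of $\Sep(\cP;\cdot)$ to choose $X^\delta \in \cP$ with $\Sep(X^\delta;\kappa-\delta) > \Sep(\cP;\kappa) - \eta$. Condition (1) of weak convergence of pyramids produces $X^\delta_n \in \cP_n$ with $\square(X^\delta_n, X^\delta) \to 0$, and the semi-continuity lemma gives $\liminf_n \Sep(X^\delta_n;\kappa-\delta-\ep) \ge \Sep(X^\delta;\kappa-\delta)$ for every $\ep > 0$. Since $X^\delta_n \in \cP_n$ implies $\Sep(\cP_n;\cdot) \ge \Sep(X^\delta_n;\cdot)$ (by left-continuity of $\Sep(X^\delta_n;\cdot)$ in each coordinate, applied inside the definition of pyramid separation), setting $\ep = \delta$ yields $\liminf_n \Sep(\cP_n;\kappa-2\delta) > \Sep(\cP;\kappa) - \eta$ for every $\delta > 0$. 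Letting $\delta \to 0+$ and then $\eta \to 0+$ gives (A).

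For (B), I argue by contradiction. If $\ell_+ := \lim_{\ep \to 0+}\limsup_n \Sep(\cP_n;\kappa-\ep) > \Sep(\cP;\kappa) + 2\eta$ for some $\eta > 0$, then since $\ell_+$ equals the infimum over $\ep > 0$ of the $\limsup$, we have $\limsup_n \Sep(\cP_n;\kappa-\ep) > \Sep(\cP;\kappa) + 2\eta$ for every $\ep > 0$. A diagonal extraction produces, after reindexing, $X_n \in \cP_n$ and Borel sets $A_0^n,\dots,A_N^n \subset X_n$ satisfying $\mu_{X_n}(A_i^n) \ge \kappa_i - 2\ep_n$ for some $\ep_n \to 0$ and $d_{X_n}(A_i^n, A_j^n) > C := \Sep(\cP;\kappa) + 2\eta$ for $i \ne j$. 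To bypass the lack of box precompactness of $\{X_n\}$, I push each $X_n$ forward by the truncated $1$-Lipschitz map
\[
\tilde f_n \colon X_n \to ([0,C]^{N+1}, d_{\ell^\infty}), \qquad \tilde f_n(x) := \bigl(\min(d_{X_n}(x, A_i^n), C)\bigr)_{i=0}^N.
\]
The pushforward $\hat Y_n := (\tilde f_n)_\ast X_n$ is Lipschitz dominated by $X_n$, so $\hat Y_n \in \cP_n$, and the hyperplane slices $B_i^n := \{y \in \supp \hat Y_n \mid y_i = 0\}$ inherit $\mu(B_i^n) \ge \kappa_i - 2\ep_n$ and $d_{\ell^\infty}(B_i^n, B_j^n) \ge C$ (each $z \in B_j^n$ arises from $x \in \overline{A_j^n}$ with $z_i = \min(d(x,A_i^n),C) \ge \min(d(A_j^n,A_i^n),C) = C$), so $\Sep(\hat Y_n;\kappa-2\ep_n) \ge C$. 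Since all $\hat Y_n$ are supported in the compact cube $[0,C]^{N+1}$, Prokhorov's theorem extracts a subsequence $\hat Y_{n_k}$ whose pushforward measures converge weakly, equivalently $\square(\hat Y_{n_k}, \hat Y_\infty) \to 0$ for some mm-space $\hat Y_\infty$ in the cube. Condition (2) of weak convergence of pyramids forces $\hat Y_\infty \in \cP$, and the mm-space version of the theorem applied to $\hat Y_{n_k} \to \hat Y_\infty$ together with the bound $\liminf_k \Sep(\hat Y_{n_k};\kappa-\delta) \ge C$ for every $\delta > 0$ yields $\Sep(\hat Y_\infty;\kappa) \ge C$. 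Since $\hat Y_\infty \in \cP$ implies $\Sep(\cP;\kappa) \ge \Sep(\hat Y_\infty;\kappa)$, this gives the contradiction $\Sep(\cP;\kappa) \ge \Sep(\cP;\kappa) + 2\eta$.

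The main obstacle lies entirely within (B): an arbitrary sequence of mm-spaces $X_n \in \cP_n$ realizing the high separation has no a priori control on diameter, concentration, or any other global invariant, so no subsequence of $\{X_n\}$ itself need converge in box distance. The truncated Lipschitz quotient $\tilde f_n$ is the key device; by projecting to $(\R^{N+1}, d_{\ell^\infty})$ and cutting off at the fixed scale $C$, it replaces each $X_n$ by a Lipschitz-dominated mm-space that retains the prescribed separation while living in a common compact cube, so that Prokhorov compactness of the measures supplies the required limit in $\cP$. The remaining bookkeeping around the small parameters and the left-continuity of $\Sep$ in each coordinate is delicate and is precisely what forces the conclusion into the ``$\lim_{\ep \to 0+}$'' form rather than a direct limit equality.
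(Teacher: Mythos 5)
This theorem is quoted directly from Ozawa--Shioya~\cite{OS} (Theorem~1.1); the paper does not prove it, so there is no ``paper proof'' to compare against. Assessed on its own terms, your argument appears to be a correct reconstruction of the limit formula.

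The semi-continuity lemma you state first is the standard box-distance transfer estimate for $\Sep$, and together with monotonicity and left-continuity of $\Sep(X;\cdot)$ in each coordinate it does yield the mm-space version. Part~(A) then follows cleanly from condition~(1) of weak convergence together with $\Sep(\cP_n;\cdot)\ge\Sep(X;\cdot)$ for any $X\in\cP_n$. For part~(B), your key device --- pushing $X_n$ forward by the truncated $1$-Lipschitz map $\tilde f_n(x)=\bigl(\min(d_{X_n}(x,A_i^n),C)\bigr)_i$ into the fixed compact cube $([0,C]^{N+1},d_{\ell^\infty})$ --- is precisely the right move: it produces $\hat Y_n\prec X_n$, hence $\hat Y_n\in\cP_n$ by the pyramid property, while preserving the separation configuration (the coordinate-hyperplane slices $B_i^n$ still carry measure $\ge\kappa_i-2\ep_n$ and are $\ell^\infty$-separated by $C$, since any accumulation point of $\tilde f_n$ over points approaching $A_j^n$ has $i$-th coordinate equal to $C$). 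Prokhorov compactness on the fixed cube, combined with condition~(2) of weak convergence, then supplies a limit $\hat Y_\infty\in\cP$ with $\Sep(\hat Y_\infty;\kappa)\ge C$, contradicting $C>\Sep(\cP;\kappa)$. This quotient-to-a-compact-model construction is essentially the mechanism used in~\cite{OS} to circumvent the lack of box precompactness of $\{X_n\}$, and your bookkeeping with $\ep$, $\delta$, and left-continuity is consistent and correctly yields the ``$\lim_{\ep\to0+}$'' form rather than a naive limit. I found no genuine gap.

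Two very minor points worth making explicit if you write this up: (i) the case $\Sep(\cP;\kappa)=+\infty$ makes~(B) vacuous, so one may assume finiteness before defining $C$; (ii) when transferring the separating sets via the box-distance parameter coupling in the semi-continuity lemma, the image sets $\psi(\tilde A_i)$ need not be Borel, so one should instead work with the $\Leb^1$-measurable preimages $\psi^{-1}(\psi(\tilde A_i))$ on $I$, or replace $\psi(\tilde A_i)$ by a Borel envelope, as is standard.
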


\section{Scale-change action} \label{sec:scale-change}

In this section, we prove Theorems \ref{thm:not-homeo}--\ref{thm:bundle-Pi}.

Let $\R_+ := (0,+\infty)$ be the multiplicative group of positive real numbers. We consider the scale-change action on $\X$;
\[
\R_+ \times \X \ni (t, X) \mapsto tX := (X, td_X, \mu_X) \in \X.
\]
The one-point space $*$ is the only fixed-point of this action and the set $\X_* := \X \setminus \{*\}$ is invariant.
The $\R_+$-action on $\X_*$ is free.
Let $\Sigma := \X_*/\R_+$ be the quotient space of $\X_*$ and $\pi \colon \X_* \to \Sigma$ the quotient map.
We denote the orbit $\pi(X)$ by $[X]$.

Simultaneously, we consider the scale-change action on $\Pi$;
\[
\R_+ \times \Pi \ni (t, \cP) \mapsto t\cP := \left\{tX\midd X\in\cP\right\} \in \Pi,
\]
which is a natural extension of the action on $\X$. Denote by $\Fix(\Pi)$ the set of fixed-points of this action, and put $\Pi_* := \Pi \setminus \Fix(\Pi)$. Then $\R_+$ acts on $\Pi_*$ freely.

For the proof of Theorem \ref{thm:not-homeo},
we need a lemma.

\begin{lem} \label{lem:nbd}
Let $Y_\varepsilon$ be the mm-space defined by
$Y_\varepsilon := (\{0,1\},|\cdot|,(1-\varepsilon)\delta_0+\varepsilon\delta_1)$ for $0 < \varepsilon < 1$.
Then, for any closed subset $V$ of $\Sigma$ with nonempty interior with respect to the box topology, there exists a $\delta(V) > 0$ such that
$[Y_\varepsilon]$ belongs to $V$ for any $\varepsilon$
with $0 < \varepsilon \le \delta(V)$.
\end{lem}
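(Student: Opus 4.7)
The plan is to prove $[Y_\varepsilon] \in V$ by producing, for each small $\varepsilon$, a sequence of mm-spaces lying in $\pi^{-1}(V)$ that $\square$-converges to $Y_\varepsilon$; closedness of $\pi^{-1}(V)$ then forces $Y_\varepsilon \in \pi^{-1}(V)$, hence $[Y_\varepsilon] \in V$.

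Write $U := \pi^{-1}(\mathrm{int}\,V)$, a nonempty open $\R_+$-invariant subset of $\pi^{-1}(V)$. Since finite-diameter mm-spaces are $\square$-dense in $\X$ (approximate a given $X$ by restricting its measure to a ball $B(x_0,R)$ and collapsing the residual mass onto $x_0$, which costs at most $1-\mu_X(B(x_0,R))$ in the $\square$-distance), we may choose $X_0 \in U$ with $\diam X_0 < \infty$ and then fix $r > 0$ with $B_\square(X_0,r) \subset U$. Set $\delta(V) := r/2$.

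Given $\varepsilon \in (0,\delta(V)]$ and a parameter $D > \diam X_0$, construct an mm-space $Z^D$ on the disjoint union $X_0 \sqcup \{c\}$ with measure $(1-\varepsilon)\mu_{X_0} + \varepsilon\delta_c$ and metric extending $d_{X_0}$ by $d(c,x) := D$ for every $x \in X_0$ (the triangle inequality is automatic because $D > \diam X_0$). Build a parameter $\phi \colon I \to Z^D$ sending $[0,1-\varepsilon)$ into $X_0$ (via a rescaling of a chosen parameter of $X_0$) and $[1-\varepsilon,1)$ to $c$, and let $\psi$ be any parameter of $X_0$ agreeing with $\phi$ on $[0,1-\varepsilon)$. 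With $I_0 := [0,1-\varepsilon)$ one has $d_{Z^D}(\phi(u),\phi(v)) = d_{X_0}(\psi(u),\psi(v))$ for all $u,v \in I_0$, yielding $\square(Z^D,X_0) \le \varepsilon < r$, so $Z^D \in U$ and, by $\R_+$-invariance, $D^{-1}Z^D \in U$.

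Comparing $D^{-1}Z^D$ with $Y_\varepsilon$ through the same $\phi$ (read in the rescaled metric) together with the parameter $\psi_Y$ of $Y_\varepsilon$ that sends $[0,1-\varepsilon)$ to $0$ and $[1-\varepsilon,1)$ to $1$, a routine case-check with $I_0 = I$ shows that the distance discrepancies vanish on all pairs meeting $[1-\varepsilon,1)$ (because the rescaled distance from $c$ to any $X_0$-point equals $1 = d_{Y_\varepsilon}(0,1)$) and are bounded by $\diam X_0 / D$ on $[0,1-\varepsilon)^2$ (both sides land in the ``$0$-cluster''). Hence $\square(D^{-1}Z^D, Y_\varepsilon) \le \diam X_0 / D \to 0$ as $D \to \infty$, and the closedness of $\pi^{-1}(V)$ forces $Y_\varepsilon \in \pi^{-1}(V)$. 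The main technical point is that the error $\diam X_0/D$ must actually vanish, which requires $\diam X_0 < \infty$ and justifies the preliminary reduction to a finite-diameter representative inside $U$; the rest is the ``glue a far atom of mass $\varepsilon$'' trick, in which a perturbation of $\square$-cost $\varepsilon$ (keeping us in $U$ whenever $\varepsilon < r$) turns, after rescaling by $D^{-1}$, into a collapsed $X_0$-cluster plus a unit-distance atom of mass $\varepsilon$, reproducing $Y_\varepsilon$ in the limit.
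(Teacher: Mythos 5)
Your proof is correct and uses essentially the same construction as the paper: glue a far atom of mass $\varepsilon$ to a finite-diameter representative $X_0$ in the interior of $\pi^{-1}(V)$, observe that the resulting space is $\square$-close to $X_0$ (cost $\le\varepsilon$) while its rescaling by the inverse of the atom's distance $\square$-converges to $Y_\varepsilon$, and exploit $\R_+$-invariance plus closedness of $\pi^{-1}(V)$. The only difference is organizational: the paper runs this as a contradiction with a single scale $r_\varepsilon$ chosen per $\varepsilon$, while you decouple the two limits (fix $\varepsilon\le r/2$, send $D\to\infty$) and argue directly via the closedness of $\pi^{-1}(V)$, which makes the dependence $\delta(V)=r/2$ explicit.
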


\begin{proof}
Let $V$ be a closed subset $V \subset \Sigma$ with nonempty interior.
Since any mm-space can be approximated by an mm-space with finite diameter, there is an mm-space $X$ with finite diameter such that
$[X]$ is an interior point of $V$.
Suppose that $[Y_\varepsilon]$ does not belong to $V$.
Then, since $Y_\varepsilon$ is an element in the open set $\pi^{-1}(\Sigma \setminus V)$, there is a large number $r_\varepsilon > 0$ such that the mm-space $Z_\varepsilon$ defined by
\[
Z_\varepsilon := X \sqcup \{z\}, \quad d_{Z_\varepsilon}|_{X\times X} := d_X, \quad d_{Z_\varepsilon}(z, X) := r_\varepsilon, \quad \mu_{Z_\varepsilon} := (1-\varepsilon)\mu_{X}+\varepsilon\delta_{z}
\]
satisfies $r_\varepsilon^{-1} Z_\varepsilon \in \pi^{-1}(\Sigma \setminus V)$.
Indeed, $\square(Y_\varepsilon, r_\varepsilon^{-1} Z_\varepsilon)$ is sufficiently small since $X \subset Z_\varepsilon$ is close to a one-point by scaling down $Z_\varepsilon$ with $r_\varepsilon^{-1}d_{Z_\varepsilon}(z, X) = 1$.
On the other hand, $Z_\varepsilon$ $\square$-converges to $X$ as $\varepsilon \to 0+$, which implies $[Z_\varepsilon] \in V$ for $\varepsilon > 0$ small enough.
This is a contradiction.
Thus, $[Y_\varepsilon]$ belongs to $V$ for every sufficiently small $\varepsilon > 0$.
This completes the proof.
\end{proof}

Lemma \ref{lem:nbd} implies the following.

\begin{lem}\label{lem:Ury}
For the quotient of the box topology on $\Sigma$,
any two distinct points in $\Sigma$ cannot be separated by
any closed neighborhoods.
In particular, $\Sigma$ is not a Urysohn space.
\end{lem}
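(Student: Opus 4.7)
The plan is to read the conclusion off as a direct consequence of Lemma \ref{lem:nbd}. Recall that by a \emph{closed neighborhood} of a point $p\in\Sigma$ one means a closed set $V\subset\Sigma$ containing $p$ in its interior; in particular every closed neighborhood is a closed subset of $\Sigma$ with nonempty interior, which is precisely the hypothesis of Lemma \ref{lem:nbd}. Thus the whole task reduces to checking that any two such closed neighborhoods must meet.

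Concretely, I would fix two arbitrary distinct points $[X],[Y]\in\Sigma$ and let $V_1,V_2$ be any closed neighborhoods of $[X]$ and $[Y]$ respectively. Applying Lemma \ref{lem:nbd} separately to $V_1$ and to $V_2$ yields positive constants $\delta(V_1),\delta(V_2)>0$ such that $[Y_\varepsilon]\in V_i$ whenever $0<\varepsilon\le\delta(V_i)$ ($i=1,2$). For any $\varepsilon$ with $0<\varepsilon\le\min\{\delta(V_1),\delta(V_2)\}$ the single orbit $[Y_\varepsilon]$ therefore lies in the intersection $V_1\cap V_2$, which is consequently nonempty. (Note that $[Y_\varepsilon]\in\Sigma$, since $Y_\varepsilon$ has two points of positive measure for $0<\varepsilon<1$, so the orbit is not the fixed-point $*$.) This precisely says that $[X]$ and $[Y]$ cannot be separated by closed neighborhoods.

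For the ``in particular'' clause, recall that a Urysohn (equivalently $T_{2\frac{1}{2}}$) space is one in which any two distinct points admit open neighborhoods $U_1,U_2$ with $\overline{U_1}\cap\overline{U_2}=\emptyset$, or equivalently, admit disjoint closed neighborhoods. The previous paragraph explicitly rules this out, so $\Sigma$ is not Urysohn. There is no genuine obstacle to overcome in this step; all the substantive work — the spike construction $Z_\varepsilon$ and its rescaled $\square$-convergence to $Y_\varepsilon$, which forces every nonempty open set in $\Sigma$ to eventually absorb the family $\{[Y_\varepsilon]\}_{\varepsilon\to 0+}$ — was already carried out in Lemma \ref{lem:nbd}. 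The only formal subtlety is the convention that closed neighborhoods have nonempty interior, after which the argument is a one-line application of that lemma.
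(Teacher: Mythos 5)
Your proof is correct and follows exactly the paper's argument: both reduce the statement to Lemma \ref{lem:nbd} by observing that closed neighborhoods of any two distinct points must both contain $[Y_\varepsilon]$ for small $\varepsilon$, hence cannot be disjoint. The only differences are minor elaborations (unpacking the Urysohn definition, noting $[Y_\varepsilon]\neq *$) that do not change the substance.
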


\begin{proof}
We take two distinct points $[X], [X'] \in \Sigma$ and take any closed neighborhoods $V$, $V'$ of $[X]$, $[X']$, respectively.
Lemma \ref{lem:nbd} proves that
$[Y_\varepsilon]$ belongs to both $V$ and $V'$
for $0 < \varepsilon \le \min\{\delta(V),\delta(V')\}$.
This completes the proof.
\end{proof}

\begin{rem}
As is proved in Proposition \ref{prop:Haus} below, $\Sigma$ is Hausdorff.
In Lemmas \ref{lem:nbd} and \ref{lem:Ury}, to consider closed neighborhoods is essential.
\end{rem}

\begin{cor} \label{cor:Ury}
For the quotient of the concentration topology,
$\Sigma$ is not Urysohn.
Moreover, $\Pi_*/\R_+$ is not Urysohn.
\end{cor}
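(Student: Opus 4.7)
The plan is to deduce both statements from Lemma \ref{lem:Ury} by comparing topologies, without repeating any geometric construction.

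For the first assertion, I would exploit that $\conc \le \square$ to conclude that the concentration-quotient topology on $\Sigma$ is coarser than the box-quotient topology: any subset of $\Sigma$ whose preimage in $\X_*$ is $\conc$-open has $\square$-open preimage a fortiori. Consequently every closed neighborhood for the concentration-quotient topology is also a closed neighborhood for the box-quotient topology. Now Lemma \ref{lem:Ury} says that no two distinct points of $\Sigma$ admit disjoint closed neighborhoods in the box-quotient topology, and this conclusion is thus inherited by the coarser concentration-quotient topology. Hence $\Sigma$ with the quotient of the concentration topology is not Urysohn.

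For the second assertion, I would exhibit the non-Urysohn space $\Sigma$ just treated as a topological subspace of $\Pi_*/\R_+$, and invoke the standard fact that every subspace of a Urysohn space is Urysohn (so a non-Urysohn subspace forces the ambient space to be non-Urysohn). By Theorem \ref{thm:weak}\,(3), the map $j \colon X \mapsto \cP_X$ is a $\conc$-to-$\rho$ topological embedding of $\X_*$ into $\Pi_*$, and it is $\R_+$-equivariant, so it descends to a continuous injection $\bar{\imath} \colon \Sigma \to \Pi_*/\R_+$. To check that $\bar{\imath}$ is a topological embedding, I would take an open $U \subseteq \Sigma$ with $\R_+$-invariant $\conc$-open preimage $\tilde U \subseteq \X_*$, write $j(\tilde U) = j(\X_*) \cap W'$ for some $\rho$-open $W'$, and replace $W'$ by its $\R_+$-saturation $W := \bigcup_{t>0} tW'$, which is still $\rho$-open and still cuts out $j(\tilde U)$ from $j(\X_*)$ by equivariance. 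Since the quotient $\Pi_* \to \Pi_*/\R_+$ is open (as every orbit-quotient by a continuous group action is), the image of $W$ is open in $\Pi_*/\R_+$ and meets $\bar{\imath}(\Sigma)$ in exactly $\bar{\imath}(U)$, giving the openness onto the image.

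The principal delicate point is verifying that $\bar{\imath}$ is indeed a topological embedding, since in general topological embeddings need not commute with quotient constructions. The $\R_+$-saturation trick, combined with the openness of orbit-quotient maps, resolves this; after that, the non-Urysohn property of $\Sigma$ transfers to $\Pi_*/\R_+$ in the standard way.
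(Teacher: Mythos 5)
Your proof is correct and takes essentially the same approach as the paper: deduce the first assertion from Lemma \ref{lem:Ury} via the topology-comparison $\conc \le \square$, and deduce the second by regarding $\Sigma$ as a topological subspace of $\Pi_*/\R_+$. The paper asserts the subspace claim in one line without verification; your $\R_+$-saturation argument, combined with the openness of the orbit-quotient map, supplies the details the paper leaves implicit and is a genuine improvement in rigor, though not a different route.
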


\begin{proof}
Since the quotient of the concentration topology is coarser than
that of the box topology on $\Sigma$,
Lemma \ref{lem:Ury} implies the first statement of the corollary.
Since $\Sigma$ is contained in $\Pi_*/\R_+$ as a subspace,
we obtain the second.
This completes the proof.
\end{proof}

\begin{proof}[Proof of Theorem \ref{thm:not-homeo}]
Suppose that $\X_*$ is homeomorphic to $\Sigma \times \R_+$.  Since $\X_*$ is a metric space, $\Sigma \simeq \Sigma \times \{1\}$ is metrizable, which contradicts Lemma \ref{lem:Ury}
for the box topology and Corollary \ref{cor:Ury}
for the concentration topology.
In the same way, $\X$ is not homeomorphic to the cone over $\Sigma$.
This completes the proof.
\end{proof}

In the same way as above, we see the following.

\begin{thm} \label{thm:not-homeo-Pi}
$\Pi_*$ is not homeomorphic to $(\Pi_*/\R_+) \times \R_+$.
\end{thm}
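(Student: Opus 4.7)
The plan is to mimic the proof of Theorem \ref{thm:not-homeo} almost verbatim, replacing $\cX_*$ with $\Pi_*$ and $\Sigma$ with $\Pi_*/\R_+$. The key ingredients are already in place: by Theorem \ref{thm:weak}, the pyramidal compactification $\Pi$ carries a metric $\rho$, and Corollary \ref{cor:Ury} states that $\Pi_*/\R_+$ is not Urysohn.

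First I would note that $\Pi_*$, being an open subset of the metrizable space $(\Pi,\rho)$, is itself metrizable (hence, in particular, Urysohn). Now suppose for contradiction that $\Pi_*$ is homeomorphic to $(\Pi_*/\R_+) \times \R_+$. Then $(\Pi_*/\R_+) \times \R_+$ is metrizable, and the subspace $\Pi_*/\R_+ \simeq (\Pi_*/\R_+)\times\{1\}$ inherits metrizability. Every metrizable space is Urysohn (indeed, completely regular and Hausdorff), which contradicts the second assertion of Corollary \ref{cor:Ury}.

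There is no serious obstacle: all of the hard work has already been done in establishing the non-Urysohn property of $\Sigma \subset \Pi_*/\R_+$ in Lemma \ref{lem:Ury} and in extending it to $\Pi_*/\R_+$ via Corollary \ref{cor:Ury}. The only thing one needs to verify is that $\Pi_*$ is open in $\Pi$, which is equivalent to saying that $\Fix(\Pi)$ is $\rho$-closed in $\Pi$; this follows readily from the definition of $\Fix(\Pi)$ as the intersection over $t \neq 1$ of the closed sets $\{\cP \in \Pi \mid t\cP = \cP\}$, once one observes that the scale-change map $\cP \mapsto t\cP$ is continuous on $\Pi$ (which can be checked directly from the weak-convergence description of $\rho$). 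With these ingredients assembled, the argument is a one-line deduction, and the proof concludes.
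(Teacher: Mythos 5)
Your proof is correct and matches the paper's (implicit) argument: the paper merely writes ``in the same way as above,'' meaning exactly the combination of metrizability of $\Pi_*$ with the non-Urysohn property of $\Pi_*/\R_+$ from Corollary~\ref{cor:Ury}. One small remark: the openness of $\Pi_*$ in $\Pi$ is not needed, since every subspace of a metrizable space is metrizable, so the digression verifying that $\Fix(\Pi)$ is $\rho$-closed can be dropped entirely.
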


The rest of this section is devoted to prove
Theorems \ref{thm:bundle} and \ref{thm:bundle-Pi}.

We first assume that $\X$ is equipped with the box topology and prove Theorem \ref{thm:bundle} for the box topology.

\begin{prop}\label{prop:princ}
$\pi \colon \X_* \to \Sigma$ is a principal $\R_+$-bundle.
\end{prop}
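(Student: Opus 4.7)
The plan is to realize the strategy sketched in the introduction: produce an $\R_+$-invariant open cover $\{\cX_\Delta\}_{\Delta \in (0,1)}$ of $\cX_*$ together with continuous $1$-homogeneous \emph{radius} functions $r_\Delta \colon \cX_\Delta \to \R_+$, and then assemble them into local trivializations. First I would set
\[
\cX_\Delta := \{\, X \in \cX \mid \mu_X(\{x\}) < \Delta \text{ for every } x \in X\,\}
\]
and verify three basic properties: $\R_+$-invariance (immediate, as scaling does not alter the measure); the covering identity $\bigcup_{\Delta < 1} \cX_\Delta = \cX_*$ (because $X = *$ exactly when $\mu_X$ has an atom of mass $1$); and $\square$-openness. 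The last amounts to upper semicontinuity of the maximum atomic mass $X \mapsto \sup_{x \in X} \mu_X(\{x\})$ under box convergence, which I would check using the $\gp$-characterization of $\square$ together with tightness of the relevant measures.

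Next I would introduce the partial diameter $\diam(X;s) := \inf\{\,\diam A \mid A \subset X \text{ Borel}, \ \mu_X(A) \geq s\,\}$ for $s \in [0,1]$, and set $r_\Delta(X) := \int_0^{(\Delta+1)/2} \diam(X;s)\, ds$. The function $s \mapsto \diam(X;s)$ is monotone nondecreasing and satisfies $\diam(X;s) = 0$ iff $\mu_X$ has an atom of mass $\geq s$; moreover $\diam(X;s) < +\infty$ whenever $s < 1$. Hence $r_\Delta(X)$ is always finite, and for $X \in \cX_\Delta$ it is strictly positive since $\diam(X;s) > 0$ holds on the subinterval $[\Delta,(\Delta+1)/2]$ of positive length. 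The identity $\diam(tX;s) = t\diam(X;s)$ yields the $1$-homogeneity $r_\Delta(tX) = t\, r_\Delta(X)$.

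The main technical step, and the reason for integrating rather than picking a single value of $s$, is continuity of $r_\Delta$. The plan is to show that $X_n \xrightarrow{\square} X$ implies $\diam(X_n;s) \to \diam(X;s)$ at every continuity point $s$ of $\diam(X;\cdot)$; since this monotone function on $[0,(\Delta+1)/2]$ has at most countably many jumps, the convergence holds Lebesgue-a.e. Combined with a uniform bound $\diam(X_n;s) \leq C$ on $[0,(\Delta+1)/2]$ for large $n$ (extracted from tightness of $\{\mu_{X_n}\}$ and using $(\Delta+1)/2 < 1$), dominated convergence yields $r_\Delta(X_n) \to r_\Delta(X)$.

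Finally, the set $U_\Delta := \pi(\cX_\Delta)$ is open because $\pi$ is an open map (the scale-change action is by homeomorphisms), and the map
\[
\phi_\Delta \colon \cX_\Delta \longrightarrow U_\Delta \times \R_+, \qquad \phi_\Delta(X) := (\pi(X),\, r_\Delta(X))
\]
is continuous, $\R_+$-equivariant for the action $t \cdot (u,s) = (u,ts)$, and bijective by $1$-homogeneity. Its inverse sends $(u,t)$ to $(t/r_\Delta(X))\, X$ for any representative $X \in \pi^{-1}(u)$, which is well-defined by homogeneity; its continuity follows because the $\R_+$-invariant continuous self-map $\cX_\Delta \to \cX_\Delta$, $X \mapsto r_\Delta(X)^{-1}X$, descends through the open quotient $\pi$ to a continuous section of $\pi|_{\cX_\Delta}$. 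Assembling these $\phi_\Delta$ over $\Delta \in (0,1)$ will give the desired principal $\R_+$-bundle structure. I expect the main obstacle to be the simultaneous handling of openness of $\cX_\Delta$ and continuity of $r_\Delta$, both of which rest on a careful analysis of how atoms and partial diameters behave under box convergence.
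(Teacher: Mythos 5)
Your plan is mathematically sound, but it answers a stronger (and later) statement than what Proposition~\ref{prop:princ} actually asserts. In the paper this proposition is understood in the weaker, Cartan sense of ``principal bundle'': the action is free and the translation function sending $(X,\,tX)$ to $t$ is continuous. The paper's proof is short: for $X \in \X_*$ one picks $\kappa_0,\dots,\kappa_N > 0$ with $\sum_i \kappa_i < 1$ and $0 < \Sep(X;\kappa_0,\dots,\kappa_N) < +\infty$, and then the scaling identity $\Sep(tX;\cdot) = t\,\Sep(X;\cdot)$ combined with the limit formula for the separation distance (Theorem~\ref{thm:lim_form}) forces $t_n \to t$ whenever $X_n \to X$ and $t_nX_n \to tX$ in $\square$. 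What you sketch---the $\R_+$-invariant open cover $\{\X_\Dl\}_{\Dl \in (0,1)}$ and the continuous $1$-homogeneous functions $r_\Dl$ built from the partial diameter---is instead the paper's proof of \emph{local triviality}, which is the core of the later Theorem~\ref{thm:bundle}. Your route would of course establish Proposition~\ref{prop:princ} a fortiori, since a locally trivial free $\R_+$-action is in particular a principal $\R_+$-space, but it is a substantially heavier detour than the paper's one-step $\Sep$-argument, and it front-loads work (continuity of $r_\Dl$, openness of $\X_\Dl$) that the paper handles afterwards as separate lemmas.

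One specific point worth fixing in your outline: you propose proving openness of $\X_\Dl$ by showing upper semicontinuity of $X \mapsto \sup_{x \in X}\mu_X(\{x\})$ via the $\gp$-characterization and tightness, which is more delicate than it sounds. The cleaner route (and the one the paper implicitly uses) is the equivalence $X \in \X_\Dl \iff \diam(X;\Dl) > 0$, together with the lower semicontinuity of the partial diameter under $\square$-convergence (Lemma~\ref{lem:partial}): if $X_n \notin \X_\Dl$, i.e.\ $\diam(X_n;\Dl) = 0$, and $X_n \to X$, then $\diam(X;\Dl) \le \liminf_n \diam(X_n;\Dl) = 0$, so $X \notin \X_\Dl$. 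The same lemma also supplies both the a.e.\ pointwise convergence of $\diam(X_n;\cdot)$ and the uniform bound you need for dominated convergence, so there is no need to reinvent that machinery.
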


\begin{proof}
We verify that if sequences $\{X_n\}_{n=1}^\infty \subset \X_*$ and $\{t_n\}_{n=1}^\infty \subset \R_+$ satisfy that $X_n$ and $t_nX_n$ $\square$-converge to $X \in \X_*$ and $tX$, respectively, as $n \to \infty$, then $t_n$ converges to $t$.
Since $\X \in \X_*$, there exist real numbers $\kappa_0, \ldots, \kappa_N > 0$ with $\sum_{i=0}^N \kappa_i < 1$ such that
\[
0 < \Sep(X; \kappa_0,\ldots,\kappa_N) < +\infty.
\]
Suppose that $t_n \not\to t$. There exist a real number $\delta>0$ and a subsequence $\{t_{n_k}\}_{k=1}^\infty$ such that either $t_{n_k} > t+\delta$ for any $k$ or $t_{n_k} < t-\delta$ for any $k$.
By applying Theorem \ref{thm:lim_form}, if $t_{n_k} > t+\delta$, then we have
\begin{align*}
&t\,\Sep(X; \kappa_0,\ldots,\kappa_N) = \Sep(tX; \kappa_0,\ldots,\kappa_N) \\
&=\lim_{\ep\to0+} \limsup_{k\to\infty} \Sep(t_{n_k}X_{n_k}; \kappa_0-\ep,\ldots,\kappa_N-\ep) \\
&\geq (t+\delta)\lim_{\ep\to0+} \limsup_{k\to\infty} \Sep(X_{n_k}; \kappa_0-\ep,\ldots,\kappa_N-\ep) \\
&= (t+\delta)\Sep(X; \kappa_0,\ldots,\kappa_N),
\end{align*}
which implies the contradiction $t \geq t+\delta$. Similarly, if $t_{n_k} < t-\delta$, then the contradiction $t \leq t-\delta$ holds. Thus we obtain $t_n \to t$. The proof is completed.
\end{proof}

Let us prove the local triviality of the principal fiber bundle
$\pi \colon \X_* \to \Sigma$.
Let $\Dl$ be a real number with $0<\Dl<1$ and put
\[
\X_\Dl := \left\{X \in \X \midd \mu_X(\{x\}) < \Dl \text{ for all } x \in X \right\}.
\]
We remark that $\sup_{x \in X} \mu_X(\{x\}) < \Dl$ if and only if $\diam(X; \Dl) > 0$,
where $\diam(X; \alpha)$ is the {\it partial diameter}, which is a fundamental invariant of an mm-space, given by
\[
\diam(X;\alpha) := \inf\left\{\diam{A} \midd A \text{ is a Borel subset with } \mu_X(A) \geq \alpha\right\}.
\]
We see that $\X_\Dl$ is open.
We have $\X_\Dl \subset \X_{\Dl'}$ for $\Dl \leq \Dl'$, and
\[
\X_* = \bigcup_{0<\Dl<1} \X_\Dl.
\]
Since $X \in \X_\Dl$ implies $tX \in \X_\Dl$ for any $t > 0$, the set $\X_\Dl$ is invariant with respect to the $\R_+$-action. Put $\Sigma_\Dl := \X_\Dl/\R_+$. Then, for the local triviality, it is sufficient to prove that $\X_\Dl$ is homeomorphic to $\Sigma_\Dl \times \R_+$ for every $\Dl \in (0,1)$.

We define a map $r_\Dl \colon \X_\Dl \to \R_+$ by
\[
r_\Dl(X) := \int_0^{\frac{\Dl+1}{2}} \diam(X;s) \, ds, \quad X \in \X_\Dl.
\]
Note that, for any $X \in \X_\Dl$,
\[
r_\Dl(X) \geq \frac{1-\Dl}{2}\diam(X; \Dl) > 0
\]
and that $r_\Dl(tX) = t \, r_\Dl(X)$ for any $t > 0$ since $\diam(tX; s) = t \diam(X; s)$ for any $s >0$.

\begin{lem}[\cite{MMG}*{Lemma 5.43}]\label{lem:partial}
If a sequence $\{X_n\}_{n=1}^\infty$ of mm-spaces $\square$-converges to an mm-space $X$, then we have
\[
\diam(X;s) \leq \liminf_{n\to\infty} \diam(X_n;s) \leq \limsup_{n\to\infty}\diam(X_n;s) \leq \lim_{\delta\to 0+} \diam(X;s+\delta)
\]
for any $s >0$.
\end{lem}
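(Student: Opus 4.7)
My plan is to transfer near-optimal Borel sets between $X_n$ and $X$ by coupling them in a common metric space. Using the equivalence $\gp(\cdot,\cdot) \le \square(\cdot,\cdot) \le 2\gp(\cdot,\cdot)$ from the preliminaries, $\square$-convergence $X_n \to X$ supplies, for each $n$, a metric on the disjoint union $X_n \sqcup X$ extending $d_{X_n}$ and $d_X$ such that $\prok(\mu_{X_n},\mu_X) \le \varepsilon_n$ for some $\varepsilon_n \to 0$. The Prokhorov inequality then lets me pass a Borel set $A$ on one side to its $\varepsilon_n$-thickening on the other side, losing at most $\varepsilon_n$ in mass and gaining at most $2\varepsilon_n$ in diameter. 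The middle inequality of the lemma is trivial, and I handle the two outer inequalities in turn.

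For the upper estimate, fix $\delta>0$ and $\eta>0$ and take a Borel $A\subset X$ with $\mu_X(A)\ge s+\delta$ and $\diam A\le \diam(X;s+\delta)+\eta$. Once $\varepsilon_n<\delta$, the thickening $A':=U_{\varepsilon_n}(A)\cap X_n$ has $\mu_{X_n}(A')\ge s+\delta-\varepsilon_n\ge s$ and $\diam A'\le \diam A + 2\varepsilon_n$, so
\[
\diam(X_n;s)\le \diam(X;s+\delta)+\eta+2\varepsilon_n.
\]
Letting $n\to\infty$, then $\eta\to 0$, then $\delta\to 0+$ yields $\limsup_n\diam(X_n;s)\le \lim_{\delta\to 0+}\diam(X;s+\delta)$.

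For the lower estimate, I run the symmetric construction: choose $\tilde A_n\subset X_n$ with $\mu_{X_n}(\tilde A_n)\ge s$ and $\diam\tilde A_n\le \diam(X_n;s)+\eta$, and set $B_n:=U_{\varepsilon_n}(\tilde A_n)\cap X$, which satisfies $\mu_X(B_n)\ge s-\varepsilon_n$ and $\diam B_n\le \diam(X_n;s)+\eta+2\varepsilon_n$. Hence $\diam(X;s-\varepsilon_n)\le \diam(X_n;s)+\eta+2\varepsilon_n$, and by monotonicity of $\diam(X;\cdot)$, for any fixed $\tau>0$ and all $n$ with $\varepsilon_n<\tau$ we have $\diam(X;s-\tau)\le \diam(X;s-\varepsilon_n)$; taking $\liminf_n$ and then $\tau\to 0+$ gives $\lim_{\tau\to 0+}\diam(X;s-\tau)\le \liminf_n\diam(X_n;s)+\eta$.

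The main obstacle is that this produces $\lim_{\tau\to 0+}\diam(X;s-\tau)$ rather than $\diam(X;s)$, so I need the left-continuity of $\alpha\mapsto\diam(X;\alpha)$. I will prove this as an auxiliary measure-theoretic lemma by a tightness plus Hausdorff compactness diagonal argument. Choose nested compacts $K_\ell\subset X$ with $\mu_X(K_\ell)\ge 1-1/\ell$, and for $s_k\uparrow s$ pick closed $A_k$ with $\mu_X(A_k)\ge s_k$ and $\diam A_k\le L+1/k$ where $L:=\lim_k\diam(X;s_k)$; by a diagonal extraction in the compact hyperspace of closed subsets of each $K_\ell$, find a common subsequence $\{k_j\}$ such that $A_{k_j}\cap K_\ell\to C^\ell$ in the Hausdorff metric of $K_\ell$ for every $\ell$. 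Since $A_{k_j}\cap K_\ell\subset U_\eta(C^\ell)$ eventually, a Portmanteau-type estimate yields $\mu_X(C^\ell)\ge \limsup_j\mu_X(A_{k_j}\cap K_\ell)\ge s-1/\ell$, and the Hausdorff limit has $\diam C^\ell\le L$. Preservation of inclusions under Hausdorff limits makes the $C^\ell$ increasing, so $C^\infty:=\overline{\bigcup_\ell C^\ell}$ is closed with $\diam C^\infty\le L$ and $\mu_X(C^\infty)\ge s$. This proves $\diam(X;s)\le L$, and combining with the previous paragraph and sending $\eta\to 0$ completes the first inequality.
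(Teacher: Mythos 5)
Your argument is correct. The transfer of near-optimal sets via Gromov--Prokhorov couplings is interchangeable, by the quoted inequality $\gp\le\square\le 2\gp$, with arguing directly through box-distance parameters, so this choice is not a substantive deviation from the cited source. You rightly observe that the lower estimate only delivers $\lim_{\tau\to 0+}\diam(X;s-\tau)\le\liminf_n\diam(X_n;s)$, so that left-continuity of $\alpha\mapsto\diam(X;\alpha)$ is the genuine extra input for the first inequality, while the upper estimate naturally produces the right-hand limit $\lim_{\delta\to 0+}\diam(X;s+\delta)$ as stated. Your auxiliary proof of left-continuity by tightness plus Hausdorff compactness is sound: the nested compacts $K_\ell$ localize the minimizing sets; the Hausdorff limits $C^\ell\subset K_\ell$ obtained along a diagonal subsequence satisfy $\diam C^\ell\le L$ since diameter is continuous in the Hausdorff metric; the estimate $\mu_X(A_{k_j}\cap K_\ell)\ge s_{k_j}-1/\ell$ together with the one-sided Hausdorff inclusion $A_{k_j}\cap K_\ell\subset U_\eta(C^\ell)$ gives $\mu_X(C^\ell)\ge s-1/\ell$; the $C^\ell$ are increasing because Hausdorff limits preserve inclusions; and the closure of their union witnesses $\diam(X;s)\le L$. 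Two minor points worth making explicit: restrict to $\ell$ with $1/\ell<s_1$ so that $A_k\cap K_\ell$ is nonempty and one works in the compact hyperspace of nonempty compact subsets of $K_\ell$; and the passage from $\mu_X(U_\eta(C^\ell))$ to $\mu_X(C^\ell)$ as $\eta\to 0+$ uses continuity of $\mu_X$ from above along $U_\eta(C^\ell)\downarrow C^\ell$, which is valid because $C^\ell$ is closed.
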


\begin{lem}
The map $r_\Dl$ is continuous on $\X_\Dl$.
\end{lem}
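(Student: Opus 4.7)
The plan is to combine the semi-continuity estimates of Lemma~\ref{lem:partial} with the dominated convergence theorem. Suppose $\{X_n\}_{n=1}^\infty \subset \X_\Dl$ $\square$-converges to $X \in \X_\Dl$; we want to show $r_\Dl(X_n) \to r_\Dl(X)$.

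First I would record pointwise convergence almost everywhere. The function $s \mapsto \diam(X;s)$ is monotone non-decreasing on $(0,1)$, hence continuous outside a countable (therefore Lebesgue-null) subset of $(\,0,(\Dl+1)/2\,]$. At any continuity point $s$ we have $\lim_{\delta \to 0+} \diam(X;s+\delta) = \diam(X;s)$, so Lemma~\ref{lem:partial} forces
\[
\lim_{n \to \infty} \diam(X_n;s) = \diam(X;s)
\]
for almost every $s \in (\,0,(\Dl+1)/2\,]$.

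Next I would produce a uniform $L^\infty$-bound on $\{\diam(X_n;\cdot)\}$. Since $\diam(X_n;\cdot)$ is non-decreasing, it suffices to bound $\diam(X_n;(\Dl+1)/2)$. Here the hypothesis $\Dl < 1$ is essential: we can pick $\delta_0 > 0$ with $(\Dl+1)/2 + \delta_0 < 1$, and then by tightness of the Borel probability measure $\mu_X$ on the complete separable metric space $X$ we have $\diam(X;(\Dl+1)/2+\delta_0) < +\infty$. Applying the right-hand inequality of Lemma~\ref{lem:partial} yields
\[
\limsup_{n \to \infty} \diam(X_n;(\Dl+1)/2) \le \lim_{\delta \to 0+}\diam(X;(\Dl+1)/2+\delta) < +\infty,
\]
so there exist $N$ and a constant $C > 0$ with $\diam(X_n;s) \le C$ for all $n \ge N$ and all $s \in (\,0,(\Dl+1)/2\,]$.

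With the almost-everywhere convergence and the uniform bound $C$ on the bounded interval $(\,0,(\Dl+1)/2\,]$, the dominated convergence theorem gives $r_\Dl(X_n) \to r_\Dl(X)$, proving continuity. The main obstacle I anticipate is the uniform bound in the second step; without it, the integrand can in principle blow up even though the measures converge, and it is precisely the restriction of the integral to $[0,(\Dl+1)/2]$ with $\Dl < 1$ that keeps the partial diameters away from the singular endpoint $s = 1$ and allows Lemma~\ref{lem:partial} to deliver a finite upper bound.
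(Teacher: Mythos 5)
Your proposal is correct and is essentially the paper's own argument: pointwise a.e.\ convergence from the monotonicity of $\diam(X;\cdot)$ together with Lemma~\ref{lem:partial}, a uniform bound obtained by pushing slightly past $(\Dl+1)/2$ (the paper takes the explicit midpoint $(\Dl+3)/4$ where you take a generic $(\Dl+1)/2+\delta_0<1$), and then dominated convergence.
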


\begin{proof}
Take any sequence $\{X_n\}_{n=1}^\infty \subset \X_\Dl$ $\square$-converging to an mm-space $X \in \X_\Dl$. Let
\[
f_n(s) := \diam(X_n; s) \quad \text{ and } \quad f(s) := \diam(X; s)
\]
for $s \in [0, \frac{\Dl+1}{2}]$. Since $f$ is nondecreasing on $[0, \frac{\Dl+1}{2}]$, the discontinuous points of $f$ are at most countable. Thus Lemma \ref{lem:partial} implies $f_n$ converges almost everywhere to $f$ and
\[
\limsup_{n\to\infty} \sup_{s \in [0,\frac{\Dl+1}{2}]} f_n(s) \leq \diam(X; \frac{\Dl+3}{4}) < +\infty.
\]
Therefore, by the dominated convergence theorem,
\[
\lim_{n\to\infty}r_\Dl(X_n) = \lim_{n\to\infty}\int_0^{\frac{\Dl+1}{2}} f_n(s) \, ds = \int_0^{\frac{\Dl+1}{2}} f(s) \, ds = r_\Dl(X).
\]
The proof is completed.
\end{proof}

\begin{proof}[Proof of Theorem \ref{thm:bundle}
for the box topology]
The continuous $1$-homogeneous map $r_\Dl \colon \X_\Dl \to \R_+$ induces the homeomorphism
$\Phi \colon \X_\Dl \to \Sigma_\Dl \times \R_+$ defined by
\[
\Phi(X) := ([X], r_\Dl(X)) \quad \text{ for } X \in \X_\Dl.
\]
Indeed, the inverse map $\Phi^{-1}$ is given by
\[
\Phi^{-1}([X], t) = r_\Dl(X)^{-1}tX.
\]
In other words, the map $r_\Dl$ produces the continuous section
\[
\Sigma_\Dl \ni [X] \mapsto r_\Dl(X)^{-1} X \in \X_\Dl,
\]
so that $\X_\Dl \to \Sigma_\Dl$ is trivial.
This implies the local triviality of our principal fiber bundle
$\R_+ \to \cX_* \to \Sigma$.

Theorem \ref{thm:not-homeo} proves
that the fiber bundle $\R_+ \to \cX_* \to \Sigma$
is globally  nontrivial.
This completes the proof of Theorem \ref{thm:bundle}
for the box topology.
\end{proof}

The following corollary is a byproduct of Theorem \ref{thm:bundle}.

\begin{cor}
There is no continuous $1$-homogeneous map $r \colon \X_* \to \R_+$
with respect to the box topology on $\cX_*$.
\end{cor}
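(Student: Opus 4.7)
The strategy is to promote the local trivialization in the proof of Theorem \ref{thm:bundle} to a global one: a hypothetical continuous $1$-homogeneous map $r\colon \X_* \to \R_+$ would allow us to mimic the argument there with $\X_\Delta$ replaced by all of $\X_*$ and $r_\Delta$ by $r$, producing a global homeomorphism $\X_* \cong \Sigma \times \R_+$ that directly contradicts Theorem \ref{thm:not-homeo}. So I would argue by contradiction.

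Assuming such an $r$ exists, first I would define $\Phi\colon \X_* \to \Sigma \times \R_+$ by $\Phi(X) := ([X], r(X))$; continuity of $\Phi$ is immediate from continuity of $\pi$ and of $r$. For the inverse, the idea is to manufacture a continuous global section $\sigma\colon \Sigma \to \X_*$ by normalization. Set $F\colon \X_* \to \X_*$, $F(X) := r(X)^{-1} X$. This map is continuous since the scale-change action $\R_+ \times \X_* \to \X_*$ is continuous and $r > 0$, and the $1$-homogeneity of $r$ forces
\[
F(tX) = r(tX)^{-1} (tX) = (t r(X))^{-1} (tX) = r(X)^{-1} X = F(X)
\]
for every $t \in \R_+$. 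Thus $F$ is $\R_+$-invariant, and because $\pi$ is by definition a topological quotient map, $F$ descends to a continuous map $\sigma\colon \Sigma \to \X_*$ satisfying $\pi \circ \sigma = \id_\Sigma$ and $r(\sigma([X])) = 1$.

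With $\sigma$ in hand, the map $\Psi\colon \Sigma \times \R_+ \to \X_*$, $\Psi([X], t) := t\,\sigma([X])$, is continuous as the composition of $\sigma \times \id_{\R_+}$ with the scale-change action, and a short computation using $r(\sigma([X])) = 1$ together with $1$-homogeneity verifies $\Phi \circ \Psi = \id_{\Sigma \times \R_+}$ and $\Psi \circ \Phi = \id_{\X_*}$. Hence $\Phi$ is a homeomorphism, contradicting Theorem \ref{thm:not-homeo}.

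I do not anticipate a genuine technical obstacle, as the argument is essentially just the global reformatting of the local trivialization already carried out for Theorem \ref{thm:bundle}; the single point requiring a moment of care is the descent of the $\R_+$-invariant map $F$ through the quotient $\pi$ to obtain a map on $\Sigma$ that is continuous for the quotient topology, which is automatic from the universal property of topological quotients.
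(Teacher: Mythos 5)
Your proof is correct and takes the same approach the paper intends: the paper states this corollary as a direct byproduct of Theorem \ref{thm:bundle}, whose nontriviality comes from Theorem \ref{thm:not-homeo}, which is exactly the contradiction you derive by promoting the local trivialization to a global one. The only difference is that you spell out the details of $\Phi$, $\sigma$, and $\Psi$, which the paper leaves implicit since they are identical to the local computation in the proof of Theorem \ref{thm:bundle}.
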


The following proposition is compared with Lemma \ref{lem:Ury}.

\begin{prop}\label{prop:Haus}
$\Sigma$ is a Hausdorff space.
\end{prop}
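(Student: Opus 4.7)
The plan is to deduce Hausdorffness of $\Sigma$ from the local triviality of the principal bundle $\pi\colon \X_* \to \Sigma$ just established. The cover $\{\X_\Dl\}_{0<\Dl<1}$ consists of $\R_+$-invariant open subsets of $\X_*$, and on each of them the homeomorphism $\Phi_\Dl\colon \X_\Dl \to \Sigma_\Dl \times \R_+$ gives $\Sigma_\Dl \times \R_+$ the structure of a metric space; in particular $\Sigma_\Dl$ itself is metrizable, as $\Sigma_\Dl \cong \Sigma_\Dl \times \{1\}$ is homeomorphic (via $\Phi_\Dl^{-1}$) to the subspace $\{Y \in \X_\Dl \mid r_\Dl(Y) = 1\}$ of the metric space $(\X, \square)$. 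Consequently each $\Sigma_\Dl$ is Hausdorff.

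Given two distinct orbits $[X] \neq [X'] \in \Sigma$, I would first pick $\Dl_0 \in (0,1)$ large enough that both $X, X' \in \X_{\Dl_0}$. This is possible because each $Y \in \X_*$ satisfies $\sup_{y} \mu_Y(\{y\}) < 1$ (the one-point space having been removed), so $Y$ lies in $\X_\Dl$ for any $\Dl$ strictly larger than this supremum; taking $\Dl_0$ to exceed both suprema simultaneously places $[X], [X'] \in \Sigma_{\Dl_0}$. Since $\Sigma_{\Dl_0}$ is Hausdorff, I choose disjoint open neighborhoods $U, U' \subset \Sigma_{\Dl_0}$ of $[X]$ and $[X']$ respectively.

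It remains to verify that $U$ and $U'$ are open in $\Sigma$, not merely in the subspace $\Sigma_{\Dl_0}$. This is immediate once one observes that $\Sigma_{\Dl_0}$ is itself open in $\Sigma$: since $\X_{\Dl_0}$ is open and $\R_+$-invariant in $\X_*$, one has $\pi^{-1}(\Sigma_{\Dl_0}) = \X_{\Dl_0}$, and openness of $\Sigma_{\Dl_0}$ follows from the definition of the quotient topology. Hence $U$ and $U'$ separate $[X]$ from $[X']$ in $\Sigma$.

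I do not anticipate a genuine obstacle here, since the delicate work has already been absorbed into the construction of the continuous $1$-homogeneous map $r_\Dl$ and the verification that it trivializes $\X_\Dl$. The argument simply transports metrizability from the total space down to the base one chart at a time. The interesting tension — highlighted against Lemma \ref{lem:Ury} — is that each individual $\Sigma_\Dl$ is fully metric, yet these charts evidently cannot be glued into a single metric on $\Sigma$; only the weaker separation property, Hausdorffness, survives the globalization, which is precisely what the non-Urysohn phenomenon forces.
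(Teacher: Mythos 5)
Your proof is correct and follows essentially the same route as the paper: pick $\Dl_0$ with $[X],[X'] \in \Sigma_{\Dl_0}$, use the local trivialization to conclude $\Sigma_{\Dl_0}$ is metrizable (hence Hausdorff), separate the two points there, and note $\Sigma_{\Dl_0}$ is open in $\Sigma$. You have merely spelled out the details (why such a $\Dl_0$ exists, why openness descends through the quotient) that the paper leaves implicit.
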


\begin{proof}
For any distinct two points $[X], [X'] \in \Sigma$, there exists $0<\Dl<1$ such that $[X], [X'] \in \Sigma_\Dl$. Since $\X_\Dl$ and $\Sigma_\Dl \times \R_+$ are homeomorphic, $\Sigma_\Dl$ is metrizable. Thus, the two points $[X], [X']$ can be separated by neighborhoods in $\Sigma$ since $\Sigma_\Dl$ is open in $\Sigma$.
The proof is completed.
\end{proof}

Before proving Theorem \ref{thm:bundle} for the concentration topology,
we study the scale-change action on $\Pi$ and prove Theorem \ref{thm:bundle-Pi} with the help of Theorem \ref{thm:Fix},
where  the proof of Theorem \ref{thm:Fix} is deferred to the next section.
The concentration case of Theorem \ref{thm:bundle}
 is obtained as a corollary of Theorem \ref{thm:bundle-Pi}.

\begin{prop}\label{prop:princ_py}
The quotient map $\pi \colon \Pi_* \to \Pi_*/\R_+$ is a principal $\R_+$-bundle.
\end{prop}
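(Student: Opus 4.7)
The plan is to parallel the proof of Proposition~\ref{prop:princ}, replacing mm-spaces by pyramids and the box distance by the metric $\rho$ on $\Pi$. Freeness of the $\R_+$-action on $\Pi_*$ is built into the definition of $\Pi_*$, so the task reduces to verifying that whenever $\{\cP_n\} \subset \Pi_*$ and $\{t_n\} \subset \R_+$ satisfy $\cP_n \to \cP \in \Pi_*$ and $t_n\cP_n \to t\cP$ weakly, then $t_n \to t$ in $\R_+$.

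The first key ingredient is the contrapositive of the implication $(3) \Rightarrow (1)$ in Theorem~\ref{thm:Fix}: since $\cP \in \Pi_*$ is not a fixed point, there exist $\kappa_0,\ldots,\kappa_N > 0$ with $\sum_{i=0}^{N} \kappa_i < 1$ such that
\[
0 < \Sep(\cP;\kappa_0,\ldots,\kappa_N) < +\infty.
\]
The second ingredient is the scaling identity $\Sep(t\cP;\kappa_0,\ldots,\kappa_N) = t\,\Sep(\cP;\kappa_0,\ldots,\kappa_N)$, which I would verify by substituting into the definition of the separation distance for pyramids and invoking the evident identity $\Sep(tX;\cdot) = t\,\Sep(X;\cdot)$ for mm-spaces, together with $t\cP = \{tX \mid X \in \cP\}$.

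The contradiction argument then closely mimics Proposition~\ref{prop:princ}. Assume $t_n \not\to t$ and extract a subsequence with $t_{n_k} \geq t+\delta$ for some $\delta > 0$ (the case $t_{n_k} \leq t-\delta$ is symmetric). Applying Theorem~\ref{thm:lim_form} to $t_{n_k}\cP_{n_k} \to t\cP$, together with $t_{n_k}\Sep(\cP_{n_k};\kappa_0-\ep,\ldots,\kappa_N-\ep) \geq (t+\delta)\Sep(\cP_{n_k};\kappa_0-\ep,\ldots,\kappa_N-\ep)$ (valid since separation distances are nonnegative), and then applying Theorem~\ref{thm:lim_form} a second time to $\cP_{n_k} \to \cP$, one obtains
\[
t\,\Sep(\cP;\kappa_0,\ldots,\kappa_N) \;\geq\; (t+\delta)\,\Sep(\cP;\kappa_0,\ldots,\kappa_N),
\]
which contradicts the finite positivity of $\Sep(\cP;\kappa_0,\ldots,\kappa_N)$.

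The main obstacle is not internal to this argument but external: the proof rests on Theorem~\ref{thm:Fix}, whose difficult implication $(3) \Rightarrow (4)$ is deferred to Section~\ref{sec:inv-pyramid}. Once that theorem is in hand, Proposition~\ref{prop:princ_py} becomes a clean adaptation of Proposition~\ref{prop:princ}. The local triviality half of Theorem~\ref{thm:bundle-Pi} is then a separate matter, requiring the $\R_+$-invariant cover $\{\Pi_\kappa\}$ and the $1$-homogeneous functions $r_\kappa$ outlined in the introduction.
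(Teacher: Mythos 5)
Your proposal is correct and takes essentially the same approach as the paper: the paper's proof simply cites ``the same argument as in the proof of Proposition~\ref{prop:princ}'' after invoking Theorem~\ref{thm:Fix} to produce a tuple with $0 < \Sep(\cP;\kappa_0,\ldots,\kappa_N) < +\infty$, and your write-up spells out that deferred argument explicitly via the scaling identity and Theorem~\ref{thm:lim_form}.
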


\begin{proof}
Assume that $\{\cP_n\}_{n=1}^\infty \subset \Pi_*$ and $\{t_n\}_{n=1}^\infty \subset \R_+$ satisfy that $\cP_n$ and $t_n\cP_n$ converge weakly to $\cP \in \Pi_*$ and $t\cP$, respectively, as $n \to \infty$.
Since $\cP \in \Pi_*$, there exist real numbers $\kappa_0, \ldots, \kappa_N > 0$ with $\sum_{i=0}^N \kappa_i < 1$ such that
\[
0 < \Sep(\cP; \kappa_0,\ldots,\kappa_N) < +\infty
\]
by Theorem \ref{thm:Fix}. By the same argument as in the proof of Proposition \ref{prop:princ}, we have $t_n \to t$ as $n\to\infty$.
The proof is completed.
\end{proof}

\begin{proof}[Proof of Theorem \ref{thm:bundle-Pi} under assuming Theorem \ref{thm:Fix}]
For any tuple $\kappa = (\kappa_0, \kappa_1, \ldots, \kappa_N)$ of positive real numbers with $\sum_{i=0}^N \kappa_i < 1$, we define
\[
\Pi_\kappa := \left\{\,\cP \in \Pi \midd \begin{array}{ll} \Sep(\cP; \kappa_0,\ldots,\kappa_N) <+\infty \text{ and } \\ \Sep(\cP; \kappa_0+\delta,\ldots,\kappa_N+\delta) > 0 \text{ for some } \delta > 0 \end{array}  \,\right\}.
\]
It is obvious that $\Pi_\kappa$ is invariant under $\R_+$-action.
We prove that $\Pi_\kappa$ is open.
Indeed, for any sequence $\{\cP_n\}_{n=1}^\infty$ of pyramids convergent weakly to a pyramid $\cP$, if $\Sep(\cP_n; \kappa_0,\ldots,\kappa_N) = +\infty$, then
\[
\Sep(\cP; \kappa_0,\ldots,\kappa_N) \geq \limsup_{n\to\infty} \Sep(\cP_n; \kappa_0,\ldots,\kappa_N) = +\infty
\]
by Theorem \ref{thm:lim_form}, and if $\Sep(\cP_n; \kappa_0+\delta,\ldots,\kappa_N+\delta) = 0$ for any $\delta>0$, then
\[
\Sep(\cP; \kappa_0+\delta,\ldots,\kappa_N+\delta) \le \liminf_{n\to\infty} \Sep(\cP_n; \kappa_0+\frac{\delta}{2},\ldots,\kappa_N+\frac{\delta}{2}) = 0
\]
for any $\delta > 0$ by Theorem \ref{thm:lim_form}. These imply that $\Pi_\kappa$ is open.

We define a map $r_\kappa \colon \Pi_\kappa \to \R_+$ by
\[
r_\kappa(\cP) := \int_0^1 \Sep(\cP; \kappa_0+s,\ldots,\kappa_N+s) \, ds, \quad \cP \in \Pi_\kappa.
\]
Then the map $r_\kappa$ is continuous and 1-homogeneous on $\Pi_\kappa$, so that $\Pi_\kappa \to \Pi_\kappa/\R_+$ is trivial.

The rest is to prove
\[
\Pi_* = \bigcup_\kappa \Pi_\kappa.
\]
By Theorem \ref{thm:Fix}, the inclusion $\bigcup_\kappa \Pi_\kappa \subset \Pi_*$ is obvious.
To prove the reverse inclusion
$\Pi_* \subset \bigcup_\kappa \Pi_\kappa$,
we take any $\cP \in \Pi_*$.
By Theorem \ref{thm:Fix},
there exist real numbers $\kappa_0, \kappa_1, \ldots, \kappa_N > 0$
with $\sum_{i=0}^N \kappa_i < 1$ such that
\[
0 < \Sep(\cP; \kappa_0, \ldots, \kappa_N) <+\infty.
\]
By the left-continuity of $\Sep(\cP;s_0,\ldots,s_N)$ in $s_i$, there is $\ep > 0$ such that
\[
\Sep(\cP; \kappa_0-\ep, \ldots, \kappa_N-\ep) <+\infty,
\]
which implies $\cP \in \Pi_{(\kappa_0-\ep, \ldots, \kappa_N-\ep)}$ and then $\Pi_* \subset \bigcup_\kappa \Pi_\kappa$.
Thus we obtain the local triviality of $\pi \colon \Pi_* \to \Pi_*/\R_+$.
The proof is completed.
\end{proof}

\begin{proof}[Proof of Theorem \ref{thm:bundle} for the concentration topology]
Assume that $\X$ is equipped with the concentration topology and consider the $\R_+$-action. Then $\pi \colon \X_* \to \X_*/\R_+$ is the restriction of bundle $\pi \colon \Pi_* \to \Pi_*/\R_+$. Thus it is also a principal $\R_+$-bundle and locally trivial (see \cite{Huse}).
This bundle is globally nontrivial because of Theorem \ref{thm:not-homeo}.
We finish the proof.
\end{proof}

\begin{cor}
\begin{enumerate}
\item
There is no continuous $1$-homogeneous map $r \colon \Pi_* \to \R_+$.
\item
There is no continuous $1$-homogeneous map $r \colon \cX_* \to \R_+$
with respect to the concentration topology on $\cX_*$.
\end{enumerate}
\end{cor}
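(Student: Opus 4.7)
The plan is to deduce both non-existence statements by showing that a continuous $1$-homogeneous function on the total space of a principal $\R_+$-bundle would trivialize it, and then invoking the already-established non-triviality results (Theorem \ref{thm:bundle-Pi} together with Theorem \ref{thm:not-homeo-Pi} for part (1), and Theorem \ref{thm:not-homeo} for part (2)). This mirrors exactly the byproduct argument given for the box topology just after the proof of Theorem \ref{thm:bundle}, so the main task is only to check that $1$-homogeneity plus continuity really do produce a global section.

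For part (1), suppose toward contradiction that $r \colon \Pi_* \to \R_+$ is continuous and satisfies $r(t\cP) = t\,r(\cP)$ for all $t \in \R_+$ and $\cP \in \Pi_*$. I would first consider the map $F \colon \Pi_* \to \Pi_*$ defined by $F(\cP) := r(\cP)^{-1}\cP$. This is continuous by the joint continuity of the scale-change action and the continuity of $r$, and the $1$-homogeneity of $r$ gives $F(t\cP) = (tr(\cP))^{-1}(t\cP) = F(\cP)$, so $F$ factors through the quotient to yield a continuous section $s \colon \Pi_*/\R_+ \to \Pi_*$ of the projection $\pi$. Then the map
\[
\Phi \colon \Pi_* \longrightarrow (\Pi_*/\R_+) \times \R_+, \qquad \Phi(\cP) := ([\cP], r(\cP)),
\]
has continuous inverse $([\cP], t) \mapsto t\,s([\cP])$, so $\Phi$ is a homeomorphism. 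This contradicts Theorem \ref{thm:not-homeo-Pi} (equivalently, it contradicts the global non-triviality asserted in Theorem \ref{thm:bundle-Pi}).

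For part (2), the argument is formally identical with $\Pi_*$ replaced by $\cX_*$ (concentration topology) and $\Pi_*/\R_+$ replaced by $\Sigma$. A continuous $1$-homogeneous $r \colon \cX_* \to \R_+$ would, by exactly the same construction, yield a homeomorphism $\cX_* \cong \Sigma \times \R_+$, contradicting Theorem \ref{thm:not-homeo} for the concentration topology.

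I do not anticipate any real obstacle here: all three ingredients (continuity of the $\R_+$-action, continuity of $r$, and $1$-homogeneity guaranteeing the normalization $r(\cP)^{-1}\cP$ depends only on the orbit) are immediate, and the contradictions with Theorems \ref{thm:not-homeo}/\ref{thm:not-homeo-Pi} are direct. The only point worth being careful about is that continuity of the inverse $\Phi^{-1}$ really does need the continuity of the section $s$, which is why it is cleaner to derive $s$ first via the quotient and then write $\Phi^{-1}([\cP], t) = t \cdot s([\cP])$ rather than trying to invert $\Phi$ by hand.
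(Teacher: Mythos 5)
Your argument is correct and matches the paper's (implicit) reasoning: the corollary is presented as a byproduct of Theorems~\ref{thm:bundle-Pi} and~\ref{thm:not-homeo-Pi} precisely because a global continuous $1$-homogeneous map would produce a global section (via $\cP \mapsto r(\cP)^{-1}\cP$) and hence a trivialization $\Pi_* \cong (\Pi_*/\R_+) \times \R_+$, which is exactly what those theorems rule out, and the same normalization argument is what the paper uses locally with $r_\kappa$ (and $r_\Delta$ in the box case). Your careful route of first factoring the normalization map through the quotient to get continuity of the section before building $\Phi^{-1}$ is exactly the right way to nail down the inverse's continuity, and part~(2) follows identically with $\Sigma$ and Theorem~\ref{thm:not-homeo}.
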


\section{Scale-invariant pyramids} \label{sec:inv-pyramid}

In this section, we prove Theorems \ref{thm:Fix} and \ref{thm:A}.
We recall that $\cA$ is the set of all monotone non-increasing sequences of nonnegative real numbers with total sum not greater than one.
We equip $\cA$ with the weak topology as a closed convex subset of the space $l^2$.
In particular, $\cA$ is compact.
For every $A = \{a_i\}_{i=1}^\infty \in \cA$, we set
\[
\cP_A := \left\{X\in\X \midd \text{There exists a sequence }\{x_i\}_{n=1}^\infty \subset X \text{ such that } \sum_{i=1}^\infty a_i\delta_{x_i} \leq \mu_X \right\}.
\]
We remark that $\cP_A$ is a pyramid a priori, where the $\square$-closedness of $\cP_A$ follows from the argument in the proof of Claim \ref{clm:in} below.

\subsection{Characterization of $\Fix(\Pi)$}

In order to prove Theorem \ref{thm:Fix}, we need a lemma.

\begin{lem}\label{lem:dissip}
Let $a_1, \ldots, a_k$, $k<+\infty$, and $\ep$ be positive numbers with $\sum_{i=1}^k a_i < 1$ and $\min_{i=1,\ldots,k}a_i \geq \ep > 0$. If an mm-space $X$ admits distinct $k$ points $\{x_i\}_{i=1}^k$ with
\[
\mu_X(\{x\}) \begin{cases}
\geq a_i & \text{ if } x=x_i, \\
< \ep & \text{ if } x \neq x_i,
\end{cases}
\quad \text{and} \quad \sum_{i=1}^k (\mu_X(\{x_i\})-a_i) < \ep,
\]
then there exist positive numbers $\kappa_0,\ldots,\kappa_N > 0$ such that $\kappa_i \leq \ep$ for every $i$,
\[
0< 1-\sum_{i=1}^k a_i - \sum_{i=0}^N \kappa_i \leq \ep, \quad \text{and} \quad \Sep(X; a_1, \ldots, a_k, \kappa_0, \ldots, \kappa_N) > 0.
\]
\end{lem}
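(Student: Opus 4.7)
My plan is to take the singletons $\{x_i\}$ as the first $k$ witnesses in the separation-distance definition---they already satisfy $\mu_X(\{x_i\})\geq a_i$ and are pairwise at positive distance---and to construct finitely many pairwise disjoint Borel sets $A_0,\ldots,A_N\subset X\setminus\{x_1,\ldots,x_k\}$ that are pairwise at positive distance and at positive distance from each $x_i$, each with $\mu_X(A_j)<\varepsilon$, and with total mass exceeding $1-\sum_i a_i-\varepsilon$. Once such sets are in hand, I would set $\kappa_j$ slightly below $\mu_X(A_j)$, so that $\sum_j\kappa_j$ lands in the required interval $[1-\sum_i a_i-\varepsilon,\,1-\sum_i a_i)$ while $\kappa_j<\mu_X(A_j)<\varepsilon$. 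The slack $\mu_X(X\setminus\{x_1,\ldots,x_k\})>1-\sum_i a_i-\varepsilon$ needed for this is a direct consequence of the hypothesis $\sum_i(\mu_X(\{x_i\})-a_i)<\varepsilon$.

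To construct the $A_j$, I would first pick a small $\rho>0$ making the balls $B(x_i,\rho)$ pairwise disjoint and $\mu_X(B(x_i,\rho))-\mu_X(\{x_i\})$ so small that $Y:=X\setminus\bigcup_iB(x_i,\rho)$, a closed (hence Polish) subspace, satisfies $\mu_X(Y)>1-\sum_i a_i-\varepsilon+\gamma$ for some $\gamma>0$. By inner regularity, take a compact $K\subset Y$ with $\mu_X(Y\setminus K)<\gamma/4$. For every $y\in K$ one has $\mu_X(\{y\})<\varepsilon$ by hypothesis, and since $r\mapsto\mu_X(B(y,r))$ is monotone with only countably many discontinuities and limit $\mu_X(\{y\})$ as $r\to 0+$, I can pick $r_y>0$ with $\mu_X(B(y,r_y))<\varepsilon$ and $\mu_X(\partial B(y,r_y))=0$. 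Extracting a finite subcover $K\subset\bigcup_{j=1}^mB(y_j,r_j)$ and forming the greedy disjoint partition $D_j:=K\cap B(y_j,r_j)\setminus\bigcup_{l<j}B(y_l,r_l)$ yields disjoint Borel sets with $\mu_X(D_j)<\varepsilon$ and $\bigsqcup_jD_j=K$.

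To turn this partition into sets at positive mutual distance, I would define $D_j^s:=\{y\in D_j\colon d(y,\bigcup_{l\neq j}D_l)\geq s\}$, which automatically gives $d(D_j^s,D_l^s)\geq s$ for $l\neq j$. The crucial estimate is $\sum_j\mu_X(D_j\setminus D_j^s)\to 0$ as $s\to 0+$: the decreasing intersection over $s$ equals $D_j\cap\overline{\bigcup_{l\neq j}D_l}$; for $l>j$ one has $\overline{D_l}\subset X\setminus B(y_j,r_j)$ so that $D_j\cap\overline{D_l}=\emptyset$, while for $l<j$ one has $D_j\cap\overline{D_l}\subset\partial B(y_l,r_l)$, which has $\mu_X$-measure zero by the choice of $r_l$. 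Choosing $s>0$ so small that $\sum_j\mu_X(D_j\setminus D_j^s)<\gamma/4$ and relabeling the nonnull members of $\{D_j^s\}$ as $A_0,\ldots,A_N$ delivers sets at pairwise distance $\geq s$, at distance $\geq\rho$ from each $x_i$, with $\mu_X(A_j)<\varepsilon$ and $\sum_j\mu_X(A_j)>1-\sum_i a_i-\varepsilon+\gamma/2$, which is enough room to select the $\kappa_j$ as described and witness $\Sep(X;a_1,\ldots,a_k,\kappa_0,\ldots,\kappa_N)>0$.

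The main obstacle will be this shrinkage step: a naive disjoint partition of $K$ into small-mass pieces need not be at positive mutual distance, and shrinking an arbitrary partition can in principle destroy substantial mass. The generic-radius trick together with the asymmetric structure of the greedy partition (honest disjointness of closures when $l>j$, but only a measure-zero sphere intersection when $l<j$) is exactly what makes the shrinkage loss vanish in the limit and thereby keeps $\sum_j\mu_X(A_j)$ inside the narrow window required to recover the desired $\kappa_0,\ldots,\kappa_N$.
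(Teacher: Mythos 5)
Your proof is correct, and it takes a genuinely different route from the one in the paper. The paper fixes a countable dense sequence $\{\xi_i\}$, pushes the measure forward along the distance functions $d_i = d_X(\xi_i,\cdot)$, iteratively strips off the $\ep$-atomic levels of these pushforward measures and covers the remainder by preimages of disjoint closed intervals, and proves termination by a nested-intersection argument (if $\bigcap_n X^{(n)}\neq\emptyset$ it would contain a single point of mass $\geq\ep$ outside $\{x_1,\ldots,x_k\}$, contradicting the hypothesis). Your argument instead uses inner regularity of the Borel probability measure on a Polish space to pass to a compact $K$, covers $K$ by finitely many small balls with $\mu_X$-null boundaries via the generic-radius observation, forms the greedy disjoint partition $D_j$, and then shrinks to $D_j^s$; the key point that the shrinkage loss $\sum_j\mu_X(D_j\setminus D_j^s)$ vanishes as $s\to 0+$ is handled correctly through the asymmetry $D_j\cap\overline{D_l}=\emptyset$ for $l>j$ versus $D_j\cap\overline{D_l}\subset\partial B(y_l,r_l)$ (null) for $l<j$, combined with continuity of measure from above and the finiteness of the family. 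Both proofs have comparable length; yours is more elementary and self-contained (standard Polish-space measure theory plus a compactness/covering argument, with no appeal to dense sequences or iterated disintegrations), whereas the paper's proof is phrased in the pushforward/dissipation language used elsewhere in the pyramid theory. One small point worth making explicit when writing this up: you should note that $\sum_j\mu_X(A_j)\le\mu_X(Y)\le 1-\sum_i\mu_X(\{x_i\})\le 1-\sum_i a_i$, which guarantees that choosing each $\kappa_j$ strictly below $\mu_X(A_j)$ indeed places $\sum_j\kappa_j$ strictly below $1-\sum_i a_i$ while the lower bound $\sum_j\kappa_j\geq 1-\sum_i a_i-\ep$ survives from the mass bookkeeping with the slack $\gamma/2$.
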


\begin{proof}
Take any mm-space $X$ having distinct $k$ points $\{x_i\}_{i=1}^k$ with
\[
\mu_X(\{x\}) \begin{cases}
\geq a_i & \text{ if } x=x_i, \\
< \ep & \text{ if } x \neq x_i.
\end{cases}
\]
Let $\{\xi_i\}_{i=1}^\infty$ be a countable dense subset of $X$ and let $d_i := d_X(\xi_i, \cdot \,)$. Put
\[
X_0 := X \setminus \bigcup_{i=1}^k U_{\ep'}(x_i)
\]
for some sufficiently small $\ep'>0$ with $\mu_X(X\setminus X_0) - \sum_{i=1}^k \mu_X(\{x_i\}) =: \eta_0 \ll \ep$.
We first find $\ep$-atomic points $\alpha_1, \ldots, \alpha_m$ of ${d_1}_* (\mu_X\lfloor_{X_0})$, i.e.,
\[
{d_1}_* (\mu_X\lfloor_{X_0})(\{t\}) \begin{cases}
\geq \ep & \text{ if } t=\alpha_i, \\
< \ep & \text{ if } t \neq \alpha_i,
\end{cases}
\]
if these exist.
We put
\[
X_i := d_1^{-1}(\{\alpha_i\})\cap X_0, \ i=1,\ldots,m, \quad \text{ and } \quad X^{(1)} := \bigcup_{i=1}^m X_i \subset X_0.
\]
There exist finitely many disjoint closed intervals $I_0, \ldots, I_{l_1}$
of $\R$ such that
\begin{align*}
&0 < {d_1}_* (\mu_X\lfloor_{X_0})(I_i) =: \kappa_i \leq \ep, \ i=0,\ldots,l_1, \\
&\text{and } \quad {d_1}_* (\mu_X\lfloor_{X_0})(\R\setminus(\{\alpha_1,\ldots,\alpha_{m}\}\sqcup \bigsqcup_{i=0}^{l_1} I_i)) =: \eta_1 \ll \ep.
\end{align*}
We put $A_i := d_1^{-1}(I_i) \cap X_0$. Note that $\mu_X(A_i) = \kappa_i$.

We next find $\ep$-atomic points $\alpha_{i1}, \ldots, \alpha_{im_i}$ of ${d_2}_* (\mu_X\lfloor_{X_i})$ for $i=1,\ldots,m$ if these exist, and we set
\[
X_{ij} := d_2^{-1}(\{\alpha_{ij}\})\cap X_i, \ j=1,\ldots,m_i, \quad \text{ and } \quad X^{(2)} := \bigcup_{i=1}^m \bigcup_{j=1}^{m_i} X_{ij} \subset X^{(1)}.
\]
There exist finitely many disjoint closed intervals $I_{l_1+1}, \ldots, I_{l_1+l_2}$ such that
\begin{align*}
&0 < {d_2}_* (\mu_X\lfloor_{X_{i(j)}})(I_j) =: \kappa_j \leq \ep, \ j=l_1+1,\ldots,l_1+l_2, \text{ for some } i(j) \in \{1,\ldots,m\},\\
&\text{and } \quad {d_2}_* (\mu_X\lfloor_{X^{(1)}})(\R\setminus(\{\alpha_{ij}\}_{i,j} \sqcup \bigsqcup_{j=l_1+1}^{l_1+l_2} I_j)) =:\eta_2 \ll \ep.
\end{align*}
We put $A_{j} := d_2^{-1}(I_j) \cap X_{i(j)}$ for every $j=l_1+1,\ldots,l_1+l_2$.

Repeating this construction, we obtain a monotone sequence $X^{(1)} \supset X^{(2)} \supset \cdots$ and a disjoint family $\{A_i\}$ on $X$.

We prove that $X^{(n)} = \emptyset$ for some $n$. It is sufficient to prove $\bigcap_{n=1}^\infty X^{(n)} = \emptyset$ since this implies $\mu_X(X^{(n)}) \to 0$ as $n \to \infty$, which contradicts the fact $\mu_X(X^{(n)}) \geq \ep$ if $X^{(n)} \neq \emptyset$.
Suppose that there exists a point $x_0 \in \bigcap_{n=1}^\infty X^{(n)} \neq \emptyset$. Then there exists a sequence $\{i_k\}_k$ such that $x_0 \in X_{i_1i_2\cdots i_k}$ for any $k$, where $i_k \in \{1, \ldots, m_{i_1i_2\cdots i_{k-1}}\}$.
Then it holds that
\[
\bigcap_{k=1}^\infty X_{i_1i_2\cdots i_k} = \{x_0\}.
\]
Indeed, suppose that there exists another point $y \in \bigcap_{k=1}^\infty X_{i_1i_2\cdots i_k}$ and let $r_0 := d_X(x_0, y)/4$. There exists a sufficiently large $k_0$ such that $d_X(\xi_{k_0}, x_0) \leq r_0$. Since $x_0, y \in X_{i_1i_2\cdots i_{k_0}}$, we have
\[
d_X(\xi_{k_0}, x_0) = d_X(\xi_{k_0}, y) = \alpha_{i_1i_2\cdots i_{k_0}},
\]
which implies the contradiction $4r_0 = d_X(x_0, y) \leq d_X(\xi_{k_0}, x_0) + d_X(\xi_{k_0}, y) \leq 2r_0$.

Thus we obtain
\[
\mu_X(\{x_0\}) = \mu_X(\bigcap_{k=1}^\infty X_{i_1i_2\cdots i_k}) = \lim_{k\to\infty} \mu_X(X_{i_1i_2\cdots i_k}) \geq \ep,
\]
but this contradicts the assumption of this lemma.

Therefore we have
\[
\Sep(X; a_1,\ldots,a_k,\kappa_0,\ldots,\kappa_{N}) \geq \min\{\min_{i\neq j}d_X(x_i,x_j), \min_{i, j}d_X(x_i,A_j), \min_{i\neq j}d_X(A_i,A_j)\} > 0,
\]
where $N = l_1+l_2+\cdots+l_n$, and
\begin{align*}
1 &= \mu_X(X_0) + \sum_{i=1}^k \mu_X(\{x_i\}) + \eta_0 \\
&= \mu_X(X^{(1)}) + \sum_{i=0}^{l_1}\kappa_i + \eta_1 + \sum_{i=1}^k \mu_X(\{x_i\}) + \eta_0 \\
&= \mu_X(X^{(2)}) + \sum_{i=0}^{l_1+l_2}\kappa_i + \sum_{i=0}^2 \eta_i + \sum_{i=1}^k \mu_X(\{x_i\}) \\
&= \mu_X(X^{(n)}) + \sum_{i=0}^{N}\kappa_i + \sum_{i=0}^n \eta_i + \sum_{i=1}^k \mu_X(\{x_i\}).
\end{align*}
Therefore, taking $\eta_i$ with $\sum_{i=0}^n \eta_i \leq \ep - \sum_{i=1}^k (\mu_X(\{x_i\})-a_i)$, we obtain the conclusion.
\end{proof}

\begin{proof}[Proof of Theorem \ref{thm:Fix}]
Since $(4) \Rightarrow (1) \Rightarrow (2) \Rightarrow (3)$ is trivial, we prove $(3) \Rightarrow (4)$. Assume that a pyramid $\cP$ satisfies the condition (3). Let $\{Y_n\}_{n=1}^\infty$ be an approximation of $\cP$ and let
\[
Y_\infty := \lim_{\longleftarrow} Y_n
\]
be the inverse limit of $\{Y_n\}_{n=1}^\infty$. There exists a probability measure $\mu_{Y_\infty}$ such that
\[
{\pi_n}_* \mu_{Y_\infty} = \mu_{Y_n}
\]
for any $n$, where $\pi_n \colon Y_\infty \to Y_n$ is the projection (see \cite{C}). Note that $Y_\infty$ admits an extended metric which is allowed to take values in $[0, +\infty]$ and $\pi_n$ is 1-Lipschitz.

Let $\{y_i\}_{i=1}^M \subset Y_\infty$, $M \leq +\infty$, be
the sequence of atomic points of $\mu_{Y_\infty}$ and let
\[
a_i := \mu_{Y_\infty}(\{y_i\})
\]
for every $i$. By relabeling, we can assume that
\[
a_1 \geq a_2 \geq \cdots \geq a_M \geq 0 =: a_{M+1} = a_{M+2} = \cdots
\]
and then $A := \{a_i\}_{i=1}^\infty \in \cA$. Then, for any $n$, we have
\[
\sum_{i=1}^M a_i \delta_{{\pi_n}(y_i)} = {\pi_n}_* (\sum_{i=1}^M a_i \delta_{y_i}) \leq {\pi_n}_* \mu_{Y_\infty} = \mu_{Y_n}.
\]

\begin{clm}\label{clm:in}
It holds that
\[
\cP \subset \cP_A.
\]
\end{clm}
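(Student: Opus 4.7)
The approach is by contradiction. Suppose $X \in \cP$ does not belong to $\cP_A$. Let $b_1 \geq b_2 \geq \cdots \geq 0$ enumerate the atom weights of $\mu_X$ in non-increasing order, padded by zeros if finite. The failure $X \notin \cP_A$ is equivalent to the existence of a smallest index $k$ with $b_k < a_k$; in particular $a_k > 0$, and $X$ admits exactly $k-1$ atoms $x_1, \ldots, x_{k-1}$ with weights $b_i = \mu_X(\{x_i\}) \geq a_i$, while every other atom of $\mu_X$ has weight at most $b_k$. We may also assume $\sum_{i<k} b_i < 1$; the degenerate case $\sum_{i<k} b_i = 1$ (where $X$ is purely atomic on $k-1$ points) is handled by the same argument after replacing each $b_i$ by $b_i - \delta$ for small $\delta > 0$ throughout.

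Pick $\ep \in (b_k, a_k)$ small enough that $\sum_{i<k} b_i + \ep < 1$. The points $x_1, \ldots, x_{k-1}$ then satisfy the hypotheses of Lemma~\ref{lem:dissip} with target weights $a_i' := b_i$: indeed $\min_i b_i = b_{k-1} \geq a_{k-1} \geq a_k > \ep$, every other atom has weight strictly less than $\ep$, and the discrepancy $\sum_i (\mu_X(\{x_i\}) - b_i)$ vanishes. Lemma~\ref{lem:dissip} produces positive numbers $\kappa_0, \ldots, \kappa_N \leq \ep$ with $0 < 1 - \sum_i b_i - \sum_j \kappa_j \leq \ep$ and
\[
\Sep(X;\, b_1, \ldots, b_{k-1}, \kappa_0, \ldots, \kappa_N) > 0.
\]
Since $\cP_X \subset \cP$, the same strict positivity transfers to $\Sep(\cP;\,\cdot)$, and hypothesis~(3) of Theorem~\ref{thm:Fix} upgrades the value to $+\infty$.

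The main obstacle is to derive a contradiction from $\Sep(\cP;\, b_1, \ldots, b_{k-1}, \kappa_0, \ldots, \kappa_N) = +\infty$ using the concrete atomic structure inherited from the inverse limit $Y_\infty$. The plan is to invoke Theorem~\ref{thm:lim_form} on the approximation $Y_n \to \cP$, which yields $\sup_n \Sep(Y_n;\, b_1 - \delta, \ldots, \kappa_N - \delta) = +\infty$ for every sufficiently small $\delta > 0$, and then to bound $\Sep(Y_n;\,\cdot)$ uniformly from above at these parameters. Because $\sum_i a_i \delta_{\pi_n(y_i)} \leq \mu_{Y_n}$ and the parameter sum $\sum_i b_i + \sum_j \kappa_j$ is within $\ep$ of $1$, any witnessing $(k+N)$-subset partition of $Y_n$ is forced to allocate essentially the full atomic budget $\sum_{i<k} a_i$ of $Y_\infty$ among the first $k-1$ sets near $\pi_n(y_1), \ldots, \pi_n(y_{k-1})$; the remaining $\kappa_j$-sets must draw their mass from the residual atoms $\pi_n(y_k), \pi_n(y_{k+1}), \ldots$ together with the continuous part of $\mu_{Y_\infty}$, which are coupled through the inverse-limit structure and cannot be arbitrarily well-separated in $Y_n$. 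Turning this mass-allocation bookkeeping into a quantitative uniform bound on $\Sep(Y_n;\,\cdot)$ — which is the technically delicate step of the proof — contradicts the unboundedness forced by (3), and therefore $X$ must lie in $\cP_A$.
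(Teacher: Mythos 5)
Your proposal does not prove the claim: you explicitly leave the ``technically delicate step'' unestablished, and the approach you sketch for filling it is unlikely to succeed.

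The paper's proof of Claim~\ref{clm:in} is a direct construction that does \emph{not} invoke condition (3). For $X\in\cP$, one has Borel maps $f_n\colon Y_n\to X$ that are $1$-Lipschitz up to $\ep_n$ with $\prok({f_n}_*\mu_{Y_n},\mu_X)\le\ep_n$; the projected atoms $f_n\circ\pi_n(y_i)$ are shown to have convergent subsequences (using only the positivity of $a_i$ and tightness of $\mu_X$), and Fatou's lemma applied to the limit points $x_i$ yields $\sum a_i\delta_{x_i}\le\mu_X$. Thus $\cP\subset\cP_A$ actually holds for \emph{every} pyramid $\cP$, irrespective of (3). Your detour through separation distances is therefore structurally unnecessary, and it misaligns the claim with the tools: Lemma~\ref{lem:dissip} and condition (3) are what the paper uses for the \emph{reverse} inclusion $\cP\supset\cP_A$ (Claim~\ref{clm:ni}), not for Claim~\ref{clm:in}.

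More seriously, the proposed contradiction looks unattainable. After applying Lemma~\ref{lem:dissip} and (3) you reach $\Sep(\cP;b_1,\dots,b_{k-1},\kappa_0,\dots,\kappa_N)=+\infty$, and you then want to \emph{bound} $\Sep(Y_n;\,\cdot\,)$ uniformly at slightly smaller parameters. But by the limit formula (Theorem~\ref{thm:lim_form}) applied to the weak convergence $Y_n\to\cP$, the equality $\Sep(\cP;\,\cdot\,)=+\infty$ forces $\Sep(Y_n;\,\cdot-\delta\,)\to+\infty$; your intuition that the residual atoms and continuous part of $\mu_{Y_\infty}$ ``cannot be arbitrarily well-separated in $Y_n$'' is precisely what the hypothesis contradicts, so a uniform upper bound cannot hold. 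In short, the missing step would have to produce a finiteness bound whose failure is built into the hypotheses, and you give no mechanism that would turn the specific atom-weight deficiency of $X$ into such a bound on the approximation sequence.

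A smaller imprecision: the criterion ``$X\notin\cP_A$ is \emph{equivalent} to the existence of a smallest $k$ with $b_k<a_k$'' is false as an equivalence, because the sequence $\{x_i\}$ in the definition of $\cP_A$ need not consist of distinct points (e.g.\ $A=(0.4,0.4,0.2)$ and $\mu_X=0.8\,\delta_{z_1}+0.2\,\delta_{z_2}$ gives $X\in\cP_A$ via $x_1=x_2=z_1$, even though $b_3=0<a_3$). Only the implication ``$X\notin\cP_A\Rightarrow$ some $b_k<a_k$'' is correct, which happens to be the direction you use, but the statement should not be asserted as an equivalence.
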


\begin{proof}
Take any $X \in \cP$. Then there exist Borel maps $f_n \colon Y_n \to X$ such that $f_n$ is 1-Lipschitz up to $\ep_n$ with nonexceptional domain $\tY_n$, that is, $\mu_{Y_n}(\tY_n) \geq 1-\ep_n$ and
\[
d_X(f_n(y), f_n(y')) \leq d_{Y_n}(y,y') + \ep_n
\]
for any $y,y'\in\tY_n$, and $\prok({f_n}_* \mu_{Y_n}, \mu_X) \leq \ep_n$ for some $\ep_n \to 0$ (see \cite{comts}*{Lemma 4.6}).

We prove that, for each $i = 1,2,\ldots,M$, the sequence $\{f_n \circ \pi_n(y_i)\}_{n=1}^\infty \subset X$ has a convergent subsequence.
Suppose that $\{f_n \circ \pi_n(y_i)\}_{n=1}^\infty$ has no convergent subsequence. Then there exist a real number $\eta > 0$ and a subsequence $\{n_k\}_{k=1}^\infty$ of $\{n\}$ such that $\{B_\eta(f_{n_k} \circ \pi_{n_k}(y_i))\}_{k=1}^\infty$ is a disjoint family.
Since $\pi_{n_k}(y_i) \in \tY_{n_k}$ for sufficiently large $k$ and
\[
B_{\frac{\eta}{2}}(\pi_{n_k}(y_i)) \cap \tY_{n_k} \subset f_{n_k}^{-1}(B_{\frac{\eta}{2}+\ep_{n_k}}(f_{n_k} \circ \pi_{n_k}(y_i))),
\]
we have
\begin{align*}
0 < a_i & \leq \mu_{Y_{n_k}}(\{\pi_{n_k}(y_i)\}) \leq \mu_{Y_{n_k}}(B_{\frac{\eta}{2}}(\pi_{n_k}(y_i))) \\
&\leq {f_{n_k}}_* \mu_{Y_{n_k}}(B_{\frac{\eta}{2}+\ep_{n_k}}(f_{n_k} \circ \pi_{n_k}(y_i))) +\ep_{n_k} \leq \mu_{X}(B_\eta(f_{n_k} \circ \pi_{n_k}(y_i))) + 2\ep_{n_k}
\end{align*}
for sufficiently large $k$, which contradicts the disjointness of $\{B_\eta(f_{n_k} \circ \pi_{n_k}(y_i))\}_{n=1}^\infty$. Thus $\{f_n \circ \pi_n(y_i)\}_{n=1}^\infty$ has a convergent subsequence.

Let
\[
x_i := \lim_{k \to \infty} f_{n_k} \circ \pi_{n_k}(y_i),
\]
where $\{n_k\}$ is a subsequence of $\{n\}$ such that $\{f_{n_k} \circ \pi_{n_k}(y_i)\}_{k=1}^\infty$ converges for any $i=1,2,\ldots,M$ (by the diagonal argument). We prove
\[
\sum_{i=1}^M a_i \delta_{x_i} \leq \mu_X.
\]
For any nonnegative bounded continuous function $\phi\colon X \to [0, +\infty)$, by Fatou's lemma, we have
\[
\sum_{i=1}^M a_i \, \phi(x_i) \leq \liminf_{k\to\infty}\sum_{i=1}^M a_i \, \phi(f_{n_k} \circ \pi_{n_k}(y_i)) \leq \lim_{k \to \infty} \int_{Y_{n_k}} \phi \circ f_{n_k} \, d\mu_{Y_{n_k}} = \int_{X} \phi \, d\mu_{X},
\]
which implies $\sum_{i=1}^M a_i \delta_{x_i} \leq \mu_X$. Therefore the proof of this claim is completed.
\end{proof}

We next prove the converse inclusion under the condition (3).
We take any $\ep > 0$ and any positive integer $k$ such that
\[
k = \sup{\left\{i \midd a_i \geq \ep\right\}}.
\]
Then, for sufficiently large $n$, since $\pi_n \colon \{y_1, \ldots, y_k\} \to Y_n$ is injective, we have
\[
\mu_{Y_n}(\{y\}) \begin{cases}
\geq a_i & \text{ if } y=\pi_n(y_i), \\
< \ep & \text{ if } y \neq \pi_n(y_i),
\end{cases}
\quad \text{and} \quad \sum_{i=1}^k (\mu_{Y_n}(\{\pi_n(y_i)\})-a_i) < \ep.
\]
Indeed, if not, then the atomic part of $\mu_{Y_\infty}$ on the inverse limit $Y_\infty$ is not equal to $\sum_{i=1}^M a_i\delta_{y_i}$.
Thus, by Lemma \ref{lem:dissip}, there exist $\kappa_0,\ldots,\kappa_N > 0$ such that $\kappa_i \leq \ep$ for every $i$,
\[
0< 1-\sum_{i=1}^k a_i - \sum_{i=0}^N \kappa_i \leq \ep, \quad \text{and} \quad \Sep(Y_n; a_1, \ldots, a_k, \kappa_0, \ldots, \kappa_N) > 0
\]
for some large $n$. Thus we have
\[
\Sep(\cP; a_1, \ldots, a_k, \kappa_0, \ldots, \kappa_N) \geq \Sep(Y_n; a_1, \ldots, a_k, \kappa_0, \ldots, \kappa_N) > 0.
\]
Combining this and the condition (3) implies that
\[
\Sep(\cP; a_1, \ldots, a_k, \kappa_0, \ldots, \kappa_N) = \infty.
\]
By the limit formula of the separation distance, we have
\[
\lim_{\eta \to 0+}\lim_{n \to \infty} \Sep(Y_n; a_1-\eta, \ldots, a_k-\eta, \kappa_0-\eta, \ldots, \kappa_N-\eta) = \infty.
\]
Now we prove the following claim.
\begin{clm}\label{clm:ni}
It holds that
\[
\cP \supset \cP_A.
\]
\end{clm}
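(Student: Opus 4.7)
To prove Claim \ref{clm:ni}, my plan is to show that every $X \in \cP_A$ can be $\square$-approximated by mm-spaces that are Lipschitz-dominated by elements of $\cP$ furnished by the separation estimate already established just before the claim; since $\cP$ is $\square$-closed and downward-closed under $\prec$, this will give $X \in \cP$. Fix $X \in \cP_A$ with witness sequence $\{x_i\}_{i=1}^M$ satisfying $\sum_i a_i \delta_{x_i} \le \mu_X$. I would first reduce to $\diam X < +\infty$ by truncating to a large metric ball, at a cost vanishing in $\square$.

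Next, for each small $\ep > 0$ I would discretize $X$ using the tuple from the computation preceding the claim. Let $k$ be the largest index with $a_k \ge \ep$, partition $X \setminus \{x_1, \ldots, x_k\}$ into Borel pieces $E_0, \ldots, E_N$ of diameter $\le \ep$ with masses $\kappa_0, \ldots, \kappa_N$ matching those produced by Lemma \ref{lem:dissip} (up to a residual of mass $\le \ep$), and choose representatives $y_j \in E_j$. The finitely supported mm-space
\[
X_\ep := \Bigl(\{x_1, \ldots, x_k\} \cup \{y_0, \ldots, y_N\},\; d_X,\; \sum_{i=1}^k a_i \delta_{x_i} + \sum_{j=0}^N \kappa_j \delta_{y_j} + (\text{residual})\Bigr)
\]
then satisfies $\square(X_\ep, X) = O(\ep)$, so it suffices to show $X_\ep \in \cP$.

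The heart of the argument is Lipschitz domination. From the calculation immediately preceding the claim, $\Sep(\cP; a_1, \ldots, a_k, \kappa_0, \ldots, \kappa_N) = +\infty$, so by Theorem \ref{thm:lim_form} I can find, for any prescribed $\eta > 0$ and $D > \diam X$, an mm-space $Y \in \cP$ together with pairwise disjoint Borel subsets $A_1, \ldots, A_k, B_0, \ldots, B_N \subset Y$ of masses $a_i - \eta$ and $\kappa_j - \eta$ respectively, at pairwise distance greater than $D$. To absorb the leftover mass of $Y$ outside $\bigcup_i A_i \cup \bigcup_j B_j$, I would enlarge $X_\ep$ to $X_\ep^+ := X_\ep \sqcup \{w\}$ by attaching a distant point $w$ at distance $D$ from everything, with mass equal to the leftover; the enlargement costs $O(\ep + \eta)$ in $\square$. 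The map $f \colon Y \to X_\ep^+$ sending each $A_i$ to $x_i$, each $B_j$ to $y_j$, and the complement to $w$ is then honestly $1$-Lipschitz (its image has diameter $\le D$ while distinct preimages are at distance $> D$) with pushforward exactly $\mu_{X_\ep^+}$, so $X_\ep^+ \prec Y \in \cP$ and hence $X_\ep^+ \in \cP$. Sending $\eta \to 0$ and then $\ep \to 0$, invoking $\square$-closedness of $\cP$ at each stage, yields $X \in \cP$.

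The main obstacle is the leftover mass of $Y$ outside the clusters: the distances from leftover points to the $A_i, B_j$ are uncontrolled by the separation estimate, so a naive collapse would break the $1$-Lipschitz property. The auxiliary far-point $w$ is the device that absorbs this mass without metric damage. The other delicate matter is coordinating the parameters $\ep$, $\eta$, $D$ and the cluster count $k + N + 1$ (which grows as $\ep \to 0$) so that all box-distance errors vanish simultaneously; choosing, e.g., $\eta \ll \ep/(k+N+1)$ before sending $\ep \to 0$ suffices.
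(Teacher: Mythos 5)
Your plan correctly identifies the overall strategy — use the separation estimate $\Sep(\cP;a_1,\dots,a_k,\kappa_0,\dots,\kappa_N)=\infty$ to produce widely separated clusters in some $Y\in\cP$, collapse them onto a discretization of $X$, and invoke $\square$-closedness and downward-closedness of $\cP$. This is also what the paper does. However, there are two genuine gaps.

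First, the auxiliary far-point $w$ does not salvage $1$-Lipschitzness; it makes it worse. Place $w$ at distance $D$ from every point of $X_\ep$, and suppose some leftover point $y'\in Y\setminus(\bigcup_i A_i\cup\bigcup_j B_j)$ happens to lie within distance $\delta<D$ of some $y\in A_i$; nothing in the separation estimate controls how close leftover mass sits to the clusters. Then $d_Y(y,y')<D$ while $d_{X_\ep^+}(f(y),f(y'))=d_{X_\ep^+}(x_i,w)=D$, so $f$ is \emph{not} $1$-Lipschitz, and no choice of the distance from $w$ to $X_\ep$ fixes this (a small distance fails for the same reason; a large distance is even worse). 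The paper's remedy is different and essential: the map $g_n\colon Y_n\to X$ is only required to be ``$1$-Lipschitz up to $\ep$'' on a nonexceptional domain $\tY_n$ of almost full measure, and the conclusion $X\in\cP$ is then extracted from the approximate-embedding result \cite{comts}*{Corollary 4.7}. Leftover mass is handled by simply being allowed to land anywhere and excluded from the Lipschitz estimate — not by a genuine Lipschitz domination $X_\ep^+\prec Y$.

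Second, you ask to ``partition $X\setminus\{x_1,\dots,x_k\}$ into Borel pieces of diameter $\le\ep$ with masses $\kappa_0,\dots,\kappa_N$ matching those produced by Lemma \ref{lem:dissip}.'' But those $\kappa_j$ are produced by applying the dissipation lemma to the approximating spaces $Y_n$, and $X$ is a fixed (after discretization, finite) space whose atom masses are what they are; there is in general no Borel partition of $X$ realizing arbitrary prescribed masses $\kappa_j$, even up to an $\ep$-residual. The paper's construction goes the other way: it reduces $X$ to a finite set $\{z_1,\dots,z_m\}$, sets $\nu_X:=\mu_X-\sum_{i\le k}a_i\delta_{x_i}$, and then \emph{groups} consecutive clusters $Y_{n,j+k+1}$ of $Y_n$ to be sent to a single point $z_l$, choosing the grouping indices $j_{l-1}\le j<j_l$ so that the accumulated $\kappa_j$'s approximate $\nu_X(\{z_l\})$ within $\ep$ (possible precisely because each $\kappa_j\le\ep$). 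This uses the freedom in combining the $\kappa_j$'s, which you do have, rather than freedom in partitioning $X$, which you do not.

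So while the skeleton is right, the key technical steps — producing an honest Lipschitz domination and matching masses — do not go through as written, and the correct resolution (approximate Lipschitz maps plus grouping of clusters, as in the paper) is qualitatively different from the far-point device you propose.
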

\begin{proof}
We take any mm-space $X$ admitting a sequence $\{x_i\}_{i=1}^\infty \subset X$ such that $\sum_{i=1}^\infty a_i\delta_{x_i} \leq \mu_X$. By the approximation, we can assume that $X$ is finite. Indeed, there exist finite nets $\cN_n$ of $X$ and Borel maps $\xi_n \colon X \to \cN_n$ such that $\lim_{n\to\infty}\prok({\xi_n}_* \mu_X, \mu_X)=0$.
Then we have
\[
\sum_{i=1}^\infty a_i\delta_{\xi_n(x_i)} \leq {\xi_n}_*\mu_X,
\]
so that $X$ can be approximated keeping our assumption. Let $\{z_1, \ldots, z_m\} := X$ and
\[
\nu_X := \mu_X - \sum_{i=1}^k a_i\delta_{x_i}.
\]

Since $\lim_{\eta \to 0+}\lim_{n \to \infty} \Sep(Y_n; a_1-\eta, \ldots, a_k-\eta, \kappa_0-\eta, \ldots, \kappa_N-\eta) = \infty$, there exist Borel subsets $Y_{n,1}, \ldots, Y_{n,N+k+1} \subset Y_n$ for any sufficiently large $n$ such that
\begin{align*}
&\mu_{Y_n}(Y_{n,i}) \geq a_i-\eta \text{ for } i=1,\ldots, k, \quad  \mu_{Y_n}(Y_{n,j+k+1}) \geq \kappa_j-\eta \text{ for } j=0,\ldots, N, \\
&\text{and} \quad \min_{i\neq j}d_{Y_n}(Y_{n,i}, Y_{n,j}) \geq \diam{X}
\end{align*}
for some $\eta < (N+k+1)^{-1}\ep$. We define a Borel map $g_n \colon Y_n \to X$ satisfying
\begin{align*}
&g_n(Y_{n,i}) = x_i \text{ for } i=1,\ldots,k \quad \text{ and } \quad g_n(Y_{n,j+k+1}) = z_l \text{ for } j_{l-1} \leq j < j_l,
\end{align*}
where
\[
j_0 := 0 \quad \text{and} \quad j_l := \max\left\{j \geq j_{l-1} \midd \nu_X(\{z_l\}) \geq \sum_{i = j_{l-1}}^{j-1} \kappa_i  \right\} \text{ for } l=1,\ldots,m.
\]
Letting $\tY_n := \bigcup_{i=1}^{j_{m}+k} Y_{n,i}$, by the definition of $g_n$, we have
\[
0 \leq \mu_X(\{z_l\}) - {g_n}_*(\mu_{Y_n}\lfloor_{\tY_n})(\{z_l\}) \leq 2\ep
\]
for any $l=1,\ldots,m$. In particular, $1-\mu_{Y_n}(\tY_n) \leq 2m\ep$. Moreover, for any $B \subset X$, we have
\[
{g_n}_* \mu_{Y_n}(B) \leq {g_n}_* (\mu_{Y_n}\lfloor_{\tY_n})(B) + 2m\ep \leq \mu_X(B) + 2m\ep.
\]
Thus $g_n$ is 1-Lipschitz up to $2m\ep$ with nonexceptional domain $\tY_n$ and
$\prok({g_n}_* \mu_{Y_n}, \mu_X) \leq 2m\ep$.
Since $Y_n \in \cP$, taking $\ep \to 0$, we obtain $X \in \cP$ (see \cite{comts}*{Corollary 4.7}).
\end{proof}
We finish the proof of this theorem.
\end{proof}

\subsection{Topological structure of $\Fix(\Pi)$}

The goal here is to prove Theorem \ref{thm:A}.

\begin{lem}\label{lem:inj}
The map $\cA \ni A \mapsto \cP_A \in \Fix(\Pi)$ is injective.
\end{lem}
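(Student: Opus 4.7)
The plan is to extract $A$ from $\cP_A$ via the ``top-$k$ atomic mass'' invariant. For an mm-space $X$, let $\alpha_j(X)$ be the $j$-th largest atomic mass of $\mu_X$ (padded with zeros if $\mu_X$ has fewer than $j$ atoms), and set $S_k(X) := \sum_{j=1}^k \alpha_j(X)$. Writing $A_k := \sum_{i=1}^k a_i$ for the cumulative sums of $A = \{a_i\}_{i=1}^\infty \in \cA$, I will establish
\[
A_k \;=\; \inf_{X \in \cP_A} S_k(X) \qquad (k \geq 1).
\]
Since the right-hand side is manifestly a function of the set $\cP_A$, this identification forces $\cP_A = \cP_B \Rightarrow A_k = B_k$ for every $k$, hence $a_k = A_k - A_{k-1} = B_k - B_{k-1} = b_k$ (with $A_0 = B_0 := 0$), yielding injectivity.

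The heart of the argument is the lower bound $S_k(X) \geq A_k$ for $X \in \cP_A$. Pick $\{x_i\} \subset X$ with $\nu := \sum_i a_i \delta_{x_i} \leq \mu_X$, group indices by $x_i$-value to rewrite $\nu = \sum_l \beta_l \delta_{y_l}$ with distinct $y_l$'s and $\beta_l := \sum_{i : x_i = y_l} a_i$ ordered $\beta_1 \geq \beta_2 \geq \cdots$. Since $\mu_X(\{y_l\}) \geq \beta_l$ for every $l$, the sorted atoms of $\mu_X$ termwise dominate those of $\nu$, so $\alpha_j(X) \geq \beta_j$ and $S_k(X) \geq \sum_{j=1}^k \beta_j$. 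For the remaining inequality $\sum_{j=1}^k \beta_j \geq A_k$, let $l(i)$ denote the group of index $i$ and set $L := \{\,l(i) : 1 \leq i \leq k,\ a_i > 0\,\}$; then $|L| \leq k$ and $\sum_{l \in L} \beta_l \geq \sum_{i \leq k} a_i = A_k$ (each group $l \in L$ contains every index mapped to it), while $\sum_{l \in L} \beta_l \leq \sum_{j=1}^k \beta_j$ because the top-$k$ values of $\beta$ dominate any $|L| \leq k$ of them.

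For the matching upper bound I construct explicit test spaces. Let $k^* := \sup\{\,i : a_i > 0\,\}$ and, for each integer $M \geq 1$, let $X_M$ be the countable discrete mm-space (with, say, the $\{0,1\}$-valued metric) having one atom $y_i$ of mass $a_i$ for each $i \leq k^*$ together with $M$ further atoms $z_1,\ldots,z_M$ each of mass $(1-\sum_i a_i)/M$. Taking $x_i := y_i$ shows $X_M \in \cP_A$. If $a_k > 0$, then once $M > (1-\sum_i a_i)/a_k$ the top-$k$ atoms of $X_M$ are precisely $a_1,\ldots,a_k$, giving $S_k(X_M) = A_k$; if $a_k = 0$, then $A_k = A_{k^*}$ and $S_k(X_M) = A_{k^*} + \min(M, k-k^*)(1-\sum_i a_i)/M \to A_{k^*}$ as $M \to \infty$. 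Either way $\inf_{X \in \cP_A} S_k(X) \leq A_k$, completing the identification.

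The main obstacle is the combinatorial core of the lower bound: it is tempting to read off $\alpha_j(X) \geq a_j$ coordinatewise from $\nu \leq \mu_X$, but this can fail since the $x_i$'s need not be distinct and several $a_i$'s may collapse into a single atom of $\mu_X$. The argument must therefore allow arbitrary groupings of $\{a_i\}$ into atoms of $X$, and the pigeonhole observation $|L| \leq k$ is precisely what salvages the \emph{cumulative} inequality $S_k \geq A_k$ despite this flexibility.
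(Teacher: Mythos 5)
Your proof is correct. The combinatorial lower bound $S_k(X)\geq A_k$ for $X\in\cP_A$ is verified carefully (the pigeonhole step handling non-distinct $x_i$'s is exactly the subtle point, and you treat it properly), and the test spaces $X_M$ give the matching upper bound.

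The paper's own proof rests on the same underlying insight — that the finite partial sums $\sum_{i\le k}a_i$ are detectable from the atom masses of elements of $\cP_A$ — but packages it as a direct contradiction rather than as a reconstruction formula. The paper fixes the first index $k$ where $A$ and $A'$ disagree (WLOG $a_k<a'_k$), builds a \emph{single} test space $X=[0,1]$ carrying atoms $a_i$ at $2^{-i}$ plus a Lebesgue continuous part, and observes that $X\in\cP_A$ while any putative $\{x_i\}$ witnessing $X\in\cP_{A'}$ would force $\sum_{i\le k}a'_i\le\mu_X(\{x_1,\dots,x_k\})\le\sum_{i\le k}a_i$, a contradiction. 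This is the special case of your lower bound applied to their specific $X$, and because they argue by contradiction at a single index, they never need the upper-bound direction of your infimum identity (your family $X_M$). In exchange, you get a cleaner structural statement — an explicit invariant $\inf_{X\in\cP_A}S_k(X)$ recovering $A$ from the \emph{set} $\cP_A$ — which makes injectivity immediate rather than a case analysis. Both arguments are elementary and of comparable length; yours is slightly more informative, theirs slightly more economical.

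One small remark: in your test space $X_M$, when $\sum_i a_i=1$ there are no $z_j$'s, and when $k^*=\infty$ the space has countably many $y_i$'s; both cases are fine (the space remains a complete separable discrete metric space with a Borel probability measure), but it is worth noting explicitly that for $M$ large enough $(1-\sum_i a_i)/M<a_{k^*}$ (or the $z_j$'s are absent), so the displayed formulas for $S_k(X_M)$ hold verbatim.
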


\begin{proof}
Take any $A=\{a_i\}_{i=1}^\infty, A'=\{a'_i\}_{i=1}^\infty \in \cA$ with $A \neq A'$.
There exists a number $k$ such that $a_i=a'_i$ for any $i<k$ and $a_k \neq a'_k$. We can assume that $a_k < a'_k$. An mm-space $X$ is defined as the unit interval $([0,1], |\cdot|)$ with probability measure
\[
\mu_X := \sum_{i=1}^\infty a_i \delta_{2^{-i}} + \left(1-\sum_{i=1}^\infty a_i\right) \Leb^1,
\]
where $\Leb^1$ is the Lebesgue measure on $[0,1]$.
Then we have $X \in \cP_A$ and $X \not\in \cP_{A'}$. Indeed, if $X \in \cP_{A'}$, then there exists a sequence $\{x_i\}_{i=1}^\infty \subset X$ such that $\sum_{i=1}^\infty a'_i\delta_{x_i} \leq \mu_X$.
Since $A$ is monotone non-increasing, we have
\[
\sum_{i=1}^k a'_i \leq \mu_X(\{x_1,\ldots,x_k\}) \leq \sum_{i=1}^k a_i < \sum_{i=1}^k a'_i,
\]
which is a contradiction. Therefore we obtain $\cP_A \neq \cP_{A'}$. This completes the proof.
\end{proof}

\begin{lem}\label{lem:conti}
The map $\cA \ni A \mapsto \cP_A \in \Fix(\Pi)$ is continuous.
\end{lem}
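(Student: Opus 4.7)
The plan is to combine the $\rho$-compactness of $\Pi$, the closedness of $\Fix(\Pi)$, Theorem~\ref{thm:Fix}, and Lemma~\ref{lem:inj}. Since $\cA \subset l^2$ is bounded in $l^\infty$, its $l^2$-weak topology agrees on sequences with coordinate-wise convergence, so I assume $A^{(n)} = \{a_i^{(n)}\}_i \to A = \{a_i\}_i$ pointwise in $i$ and must show $\cP_{A^{(n)}} \to \cP_A$ weakly in $\Pi$. By the $\rho$-metrizability and compactness of $\Pi$ (Theorem~\ref{thm:weak}), it suffices to verify that every $\rho$-convergent subsequence has limit $\cP_A$; for notational ease I assume the whole sequence converges to a pyramid $\cQ$.

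First I would check that $\cQ \in \Fix(\Pi)$. The scale-change map $(t,\cP) \mapsto t\cP$ is continuous on $\R_+ \times \Pi$ (an easy consequence of the continuity of scaling on $(\X,\square)$ together with the definition of weak convergence), so $\Fix(\Pi) = \bigcap_{t \neq 1} \{\cP : t\cP = \cP\}$ is closed. Each $\cP_{A^{(n)}}$ lies in $\Fix(\Pi)$ by Theorem~\ref{thm:Fix}, hence so does $\cQ$, and therefore $\cQ = \cP_B$ for some $B \in \cA$. By Lemma~\ref{lem:inj}, proving $\cQ = \cP_A$ is equivalent to $B = A$ and will complete the proof.

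I would then establish $\cQ = \cP_A$ via two inclusions. For $\cQ \subset \cP_A$: given $X \in \cQ$, weak convergence yields $X_n \in \cP_{A^{(n)}}$ with $\square(X_n, X) \to 0$, and each $X_n$ carries $\{x_i^{(n)}\}_i \subset X_n$ with $\sum_i a_i^{(n)} \delta_{x_i^{(n)}} \leq \mu_{X_n}$. Realizing the box-convergence inside a common complete separable metric space and running the tightness-and-diagonal argument from the proof of Claim~\ref{clm:in}, I extract for every $i$ with $a_i > 0$ a subsequential limit $x_i \in X$; for indices with $a_i = 0$ there is nothing to do. Applying Fatou's lemma to bounded continuous nonnegative test functions gives $\sum_i a_i \delta_{x_i} \leq \mu_X$, so $X \in \cP_A$. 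For $\cP_A \subset \cQ$: given $X \in \cP_A$, write $\mu_X = \sum_i a_i \delta_{x_i} + \nu_X$ with $\nu_X \geq 0$ of mass $1 - \sum_i a_i$, and construct $X_n \in \cP_{A^{(n)}}$ with $\square(X_n, X) \to 0$ by (i) redistributing mass near each $x_i$ so that its total mass becomes at least $a_i^{(n)}$, (ii) Prokhorov-discretizing part of $\nu_X$ into small atoms near its support to host the tail weights $a_i^{(n)}$ for large $i$, and (iii) attaching a single extra point $z_n$ at distance $\eta_n \to 0$ from a base point of $X$ to absorb any surplus $(\sigma_n - \sigma)^+$, where $\sigma_n := \sum_i a_i^{(n)}$ and $\sigma := \sum_i a_i$. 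Coordinate-wise convergence $a_i^{(n)} \to a_i$ controls the TV cost of (i)--(ii) on each finite truncation, the tail contribution is bounded by the $\square$-diameter of the attached small cluster, and the standard inequality $\square \leq 2\tv$ on a common space yields $\square(X_n, X) \to 0$; the construction is arranged so that an explicit assignment $\{y_i^{(n)}\} \subset X_n$ realizes $\sum_i a_i^{(n)} \delta_{y_i^{(n)}} \leq \mu_{X_n}$.

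The main obstacle lies in the inclusion $\cP_A \subset \cQ$. Coordinate-wise convergence of $A^{(n)}$ to $A$ does not imply $l^1$-convergence, so the total-mass surplus $\sigma_n - \sigma$ need not vanish and mass may leak into the tail of $A^{(n)}$. The delicate point is to simultaneously raise the masses at the fixed atoms $x_i$ from $a_i$ to $a_i^{(n)}$, create fresh small atoms to receive the tail $\{a_i^{(n)}\}_{i > K}$, and absorb any residual surplus, using only the material provided by $\nu_X$ together with one extra point $z_n$ at vanishing distance, all while keeping both the total-variation and the metric perturbation small; balancing these three sources of error is the technical heart of the argument.
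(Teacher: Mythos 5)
Your high-level strategy matches the paper's: you reduce to showing, for a subsequential limit $\cQ$, the two inclusions $\cQ \subset \cP_A$ and $\cP_A \subset \cQ$, which are exactly the two halves of the weak convergence criterion that the paper verifies directly (the compactness-of-$\Pi$ wrapper and the invocation of closedness of $\Fix(\Pi)$ plus Lemma \ref{lem:inj} are a harmless but unnecessary detour, since you end up proving $\cQ=\cP_A$ by two inclusions anyway). Your treatment of $\cQ \subset \cP_A$, via the diagonal/tightness argument of Claim \ref{clm:in} and Fatou's lemma, is the same as the paper's.

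The gap lies in your step (iii) for $\cP_A \subset \cQ$. You identify the quantity to be absorbed by the attached point $z_n$ as $(\sigma_n-\sigma)^+$ with $\sigma_n=\sum_i a_i^{(n)}$, $\sigma=\sum_i a_i$, and you correctly note this need not vanish. But then putting $\Theta(1)$ mass on a single point at vanishing distance from one base point of $X$ cannot keep $\square(X_n,X)$ small unless $\mu_X$ already concentrates near that base point; for, e.g., $A=0$, $A^{(n)}=(1/n,\dots,1/n,0,\dots)$ and $X$ the uniform interval, the surplus is $1$ and your $X_n$ would be a two-atom space, nowhere near $X$. The quantity to be handled is not $(\sigma_n-\sigma)^+$: once you set the masses at $x_1,\dots,x_k$ to $a_1^{(n)},\dots,a_k^{(n)}$ (rather than $a_1,\dots,a_k$), the total mass left to distribute over the rest of the space is $1-\sum_{i\le k}a_i^{(n)}$, which automatically matches the available mass $1-\sum_{i\le k}a_i$ up to $\sum_{i\le k}|a_i^{(n)}-a_i|\to 0$; the entire tail of $A^{(n)}$ is absorbed together with the free mass in this $1-\sum_{i\le k}a_i^{(n)}$ budget, and there is no $\Theta(1)$ surplus. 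This is precisely what the paper's construction does: after reducing $X$ to a finite space $\tX$, it assigns the first $k$ coordinates $a_{ni}$ to designated points $\xi_i$ and distributes the remaining small weights $c_{nj}$ (tail coordinates of $A^{(n)}$ interleaved with a discretization of $1-\sigma_n$) greedily over the points $\eta_l$, getting a total-variation error of order $N\ep$, whence $\square(X_n,X)\le 4N\ep$. You should drop the attached-point step and replace it with this mass-balance observation; otherwise the construction as written would fail whenever $\sigma_n\not\to\sigma$.
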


\begin{proof}
Assume that $A_n = \{a_{ni}\}_{i=1}^\infty \in \cA$ converges weakly to $A = \{a_i\}_{i=1}^\infty \in \cA$. Let us prove that $\cP_{A_n}$ converges weakly to $\cP_A$.

We first prove that $\lim_{n\to\infty} \square(X, \cP_{A_n}) = 0$ for any $X \in \cP_A$. By the standard approximation, $X$ can be assumed to be a finite mm-space. Take any $\ep > 0$ and find a number $k$ such that $a_{k+1} < \ep$. Then, for sufficiently large $n$, we have
\[
|a_{ni}-a_i| < \frac{\ep}{2^i} \text{ for } i=1,\ldots,k \quad \text{and} \quad a_{n,k+1} < \ep.
\]
Since $A_n$ is a monotone non-increasing sequence, $a_{n,k+1} < \ep$ implies $\sup_{i>k} a_{ni} < \ep$. Take such large $n$ and fix it. Let $\{x_i\}_{i=1}^\infty$ be a sequence in $X$ such that $\sum_{i=1}^\infty a_i\delta_{x_i} \leq \mu_X$ and let $\{y_1,\ldots,y_N\} := X$.
We define
\[
\tX := \{\xi_1,\ldots,\xi_k, \eta_1,\ldots,\eta_N\}
\]
and define two maps $\phi\colon\tX\to X$ and $\psi\colon X \to\tX$ by
\[
\phi(x) := \begin{cases}
x_i & \text{if } x=\xi_i, \\
y_i & \text{if } x=\eta_i,
\end{cases}
\quad \text{and} \quad
\psi(y_i) := \eta_i.
\]
We now define two probability measures $\mu_{\tX}$ and $\mu_{\tX_n}$ on $\tX$ as
\[
\mu_{\tX} := \sum_{i=1}^k a_i \delta_{\xi_i} + \psi_*(\mu_X-\sum_{i=1}^k a_i\delta_{x_i}), \quad \mu_{\tX_n}\lfloor_{\{\xi_1, \ldots, \xi_k\}} := \sum_{i=1}^k a_{ni} \delta_{\xi_i}
\]
and $\mu_{\tX_n}\lfloor_{\{\eta_1, \ldots, \eta_N\}}$ is determined as follows. Find finitely many real numbers $b_{n1},\ldots,b_{nM} \in [0,\ep)$ with
\[
\sum_{i=1}^M b_{ni} = 1-\sum_{i=1}^\infty a_{ni}
\]
and set
\[
c_{nj} := \begin{cases}
b_{nj} & \text{if } 1\leq j\leq M, \\
a_{n, j-M+k} & \text{if } j>M.
\end{cases}
\]
Note that $\sup_{j} c_{nj} < \ep$. We define
\[
\mu_{\tX_n}(\{\eta_i\}) := \sum_{j=j_{i-1}+1}^{j_i} c_{nj}
\]
for $i=1,\ldots,N$, where $j_0 := 0$, $j_N := +\infty$, and
\[
j_i := \inf\left\{j > j_{i-1} \midd \sum_{l = j_{i-1}+1}^j c_{nl} \geq \mu_{\tX}(\{\eta_i\}) \right\}
\]
for $i=1,\ldots,N-1$.
Under $\inf{\emptyset} = +\infty$, if there exists $i_0<N$ such that $j_{i_0} = \cdots =j_N = +\infty$, then we understand
\[
\mu_{\tX_n}(\{\eta_{i_0+1}, \ldots, \eta_{N}\}) = 0.
\]
Letting $i_0 := \min\left\{1\leq i \leq N \midd j_i=+\infty \right\}$,
we have
\[
\mu_{\tX}(\{\eta_i\}) \leq \mu_{\tX_n}(\{\eta_i\}) \leq \mu_{\tX}(\{\eta_i\})+\ep
\]
for any $i < i_0$ by the definition. On the other hand, since
\[
1-\sum_{i=1}^k a_{ni} = \sum_{j=1}^\infty c_{nj} \leq \sum_{i=1}^{i_0} \mu_{\tX}(\{\eta_i\}) + (i_0-1)\ep,
\]
we have
\[
\sum_{i=i_0+1}^N \mu_{\tX}(\{\eta_{i}\}) = 1-\sum_{i=1}^k a_{i} - \sum_{i=1}^{i_0} \mu_{\tX}(\{\eta_i\}) \leq \sum_{i=1}^k|a_{ni}-a_i| + (i_0-1)\ep \leq i_0\ep.
\]
These imply that
\begin{align*}
&|\mu_{\tX_n}(\{\eta_{i_0}\}) - \mu_{\tX}(\{\eta_{i_0}\})| \\
&\leq \sum_{i=1}^{i_0-1}|\mu_{\tX_n}(\{\eta_i\}) - \mu_{\tX}(\{\eta_i\})| + \sum_{i=i_0+1}^N \mu_{\tX}(\{\eta_{i}\}) + \sum_{i=1}^k |a_{ni}-a_i| \\
&\leq 2i_0\ep \leq 2N\ep.
\end{align*}
Hence we have
\[
\tv(\mu_{\tX_n}, \mu_{\tX}) = \frac{1}{2} \sum_{i=1}^k |a_{ni}-a_i| + \frac{1}{2} \sum_{i=1}^N |\mu_{\tX_n}(\{\eta_i\})-\mu_{\tX}(\{\eta_i\})| \leq 2N\ep.
\]
Therefore, letting $X_n := (X, d_X, \phi_* \mu_{\tX_n})$, we obtain $X_n \in \cP_{A_n}$ and
\[
\square(X, X_n) \leq 2\tv(\mu_X, \phi_*\mu_{\tX_n}) \leq 2\tv(\mu_{\tX}, \mu_{\tX_n}) \leq 4N\ep,
\]
which imply $\lim_{n\to\infty} \square(X, \cP_{A_n}) = 0$.

We next prove that $\liminf_{n\to\infty} \square(X, \cP_{A_n}) > 0$ for any $X \in \X\setminus\cP_A$. It is sufficient to prove that if $X_n \in \cP_{A_n}$ $\square$-converges to $X$, then we have $X \in \cP_A$, due to consider the contraposition and to extract a subsequence. Assume that $X_n \in \cP_{A_n}$ $\square$-converges to $X$.
Let $\{x_{ni}\}_{i=1}^\infty$ be a sequence in $X_n$ with
\[
\sum_{i=1}^\infty a_{ni}\delta_{x_{ni}} \leq \mu_{X_n}.
\]
There exist Borel maps $f_n \colon X_n \to X$ and a sequence $\ep_n \to 0$ such that $f_n$ is 1-Lipschitz up to $\ep_n$ and $\prok({f_n}_* \mu_{X_n}, \mu_X) \leq \ep_n$ (actually, $f_n$ can be assumed to be an $\ep_n$-mm-isomorphism but this is unnecessary here). The sequence $\{f_n(x_{ni})\}_{n=1}^\infty$ has a convergent subsequence by the same argument as in the proof of Claim \ref{clm:in}. Let
\[
x_i := \lim_{k \to \infty} f_{n_k}(x_{n_ki}),
\]
where $\{n_k\}$ is a subsequence of $\{n\}$ such that $\{f_{n_k}(x_{n_ki})\}_{k=1}^\infty$ converges for any $i$. Then, since $A_n$  converges weakly to $A$ and ${f_n}_* \mu_{X_n}$ converges weakly to $\mu_X$, we have
\[
\sum_{i=1}^\infty a_i \, \phi(x_i) \leq \liminf_{k\to\infty}\sum_{i=1}^\infty a_{n_ki} \, \phi(f_{n_k}(x_{n_ki})) \leq \lim_{k \to \infty} \int_{X_{n_k}} \phi \circ f_{n_k} \, d\mu_{X_{n_k}} = \int_{X} \phi \, d\mu_{X}
\]
for any nonnegative bounded continuous function $\phi\colon X \to [0, +\infty)$, where the first inequality follows from Fatou's lemma. This implies $\sum_{i=1}^\infty a_i \delta_{x_i} \leq \mu_X$, so that $X \in \cP_A$. The proof of this lemma is now completed.
\end{proof}

\begin{proof}[Proof of Theorem \ref{thm:A}]
By Lemmas \ref{lem:inj} and \ref{lem:conti}, the map $\cA \ni A \mapsto \cP_A \in \Fix(\Pi)$ is a continuous bijection from the compact space $\cA$ to the Hausdorff space $\Fix(\Pi)$. Thus this is a homeomorphism. The proof is completed.
\end{proof}

\section{Further questions} \label{sec:Questions}

It is asked in \cite{BCZ}*{Question 9.1}
if the Gromov-Hausdorff space is homeomorphic to $l^2$.
In our previous paper \cite{KNS},
we have proved that $\cX$ is not homeomorphic to $l^2$
with respect to the concentration topology.
The following question remains.

\begin{qst}
Is $\cX$ homeomorphic to $l^2$ with respect to the box topology?
\end{qst}

It follows from Theorem \ref{thm:bundle} (resp.~\ref{thm:bundle-Pi})
that $\X_*$ and $\Sigma$ (resp.~$\Pi_*$ and $\Pi_*/\R$)
are weakly homotopy equivalent to each other.
We also ask the following.

\begin{qst} \label{qst:contra}
Is the space $\cX_*$ contractible and/or locally contractible
with respect to the box and/or concentration topologies?
\end{qst}

\begin{qst} \label{qst:contra-quot}
Is $\Sigma$ contractible with respect to the quotient of the box and/or concentration topologies?
\end{qst}

\begin{qst}
What about the same questions as Questions
\ref{qst:contra} and \ref{qst:contra-quot}
for $\Pi$, $\Pi_*$, and $\Pi_*/\R_+$
instead of $\X$, $\X_*$, and $\Sigma$?
\end{qst}

The following is already stated in our previous paper \cite{KNS}.

\begin{qst} \label{qst:geod}
Is the observable metric on $\cX$ geodesic?
Is the pyramidal compactification $\Pi$ of $\cX$ geodesic?
\end{qst}

\subsection*{Acknowledgements}
The authors would like to thank Professors Theo Sturm, Max von Renesse, Ayato Mitsuishi, Atsushi Katsuda, Kohei Suzuki,
and Shouhei Honda for their valuable comments.

\begin{bibdiv}
\begin{biblist}

\bib{BCZ}{article}{
author={Banakh, T.},
author={Cauty, R.},
author={Zarichnyi, M.},
title={Open problems in infinite-dimensional topology},
book={
  title={Open Problems in Topology, II},
  editor={Pearl, E.},
  publisher={Elsevier},
  date={2007},
},
pages={601--624},
}

\bib{C}{article}{
   author={Choksi, J. R.},
   title={Inverse limits of measure spaces},
   journal={Proc. London Math. Soc. (3)},
   volume={8},
   date={1958},
   pages={321--342},
   issn={0024-6115},
}

\bib{D}{article}{
   author={Dold, Albrecht},
   title={Partitions of unity in the theory of fibrations},
   journal={Ann. of Math. (2)},
   volume={78},
   date={1963},
   pages={223--255},
   issn={0003-486X},
}

\bib{EKM}{article}{
   author={Esaki, Syota},
   author={Kazukawa, Daisuke},
   author={Mitsuishi, Ayato},
   title={Invariants for Gromov's pyramids and their applications},
   note={in preparation},
}

\bib{GPW}{article}{
   author={Greven, Andreas},
   author={Pfaffelhuber, Peter},
   author={Winter, Anita},
   title={Convergence in distribution of random metric measure spaces
   {\rm (}$\Lambda$-coalescent measure trees{\rm )}},
   journal={Probab. Theory Related Fields},
   volume={145},
   date={2009},
   number={1-2},
   pages={285--322},
   issn={0178-8051},
}

\bib{Grmv}{book}{
   author={Gromov, Misha},
   title={Metric structures for Riemannian and non-Riemannian spaces},
   series={Modern Birkh\"auser Classics},
   edition={Reprint of the 2001 English edition},
   publisher={Birkh\"auser Boston, Inc., Boston, MA},
   date={2007},
   pages={xx+585},
   isbn={978-0-8176-4582-3},
   isbn={0-8176-4582-9},
}

\bib{Huse}{book}{
   author={Husemoller, Dale},
   title={Fibre bundles},
   series={Graduate Texts in Mathematics},
   volume={20},
   edition={3},
   publisher={Springer-Verlag, New York},
   date={1994},
   pages={xx+353},
   isbn={0-387-94087-1},
}

\bib{IT}{article}{
   author={Ivanov, Alexander O.},
   author={Tuzhilin, Alexey A.},
   title={Isometry group of Gromov-Hausdorff space},
   journal={Mat. Vesnik},
   volume={71},
   date={2019},
   number={1-2},
   pages={123--154},
   issn={0025-5165},
}

\bib{comts}{article}{
   author={Kazukawa, Daisuke},
   title={Convergence of metric transformed spaces},
   journal={Israel J. Math.},
   volume={252},
   date={2022},
   number={1},
   pages={243--290},
   issn={0021-2172},
}

\bib{KNS}{article}{
   author={Kazukawa, Daisuke},
   author={Nakajima, Hiroki},
   author={Shioya, Takashi},
   title={Topological aspects of the space of metric measure spaces},
   note={preprint},
}

\bib{Led}{book}{
   author={Ledoux, Michel},
   title={The concentration of measure phenomenon},
   series={Mathematical Surveys and Monographs},
   volume={89},
   publisher={American Mathematical Society, Providence, RI},
   date={2001},
   pages={x+181},
   isbn={0-8218-2864-9},
}

\bib{Levy}{book}{
   author={L\'{e}vy, Paul},
   title={Probl\`emes concrets d'analyse fonctionnelle. Avec un compl\'{e}ment sur
   les fonctionnelles analytiques par F. Pellegrino},
   language={French},
   note={2d ed},
   publisher={Gauthier-Villars, Paris},
   date={1951},
   pages={xiv+484},
}

\bib{Lohr}{article}{
   author={L\"{o}hr, Wolfgang},
   title={Equivalence of Gromov-Prohorov- and Gromov's
   $\underline\square_\lambda$-metric on the space of metric measure spaces},
   journal={Electron. Commun. Probab.},
   volume={18},
   date={2013},
   pages={no. 17, 10},
}

\bib{VMil}{article}{
   author={Milman, V. D.},
   title={The heritage of P. L\'{e}vy in geometrical functional analysis},
   note={Colloque Paul L\'{e}vy sur les Processus Stochastiques (Palaiseau,
   1987)},
   journal={Ast\'{e}risque},
   number={157-158},
   date={1988},
   pages={273--301},
   issn={0303-1179},
}

\bib{OS}{article}{
   author={Ozawa, Ryunosuke},
   author={Shioya, Takashi},
   title={Limit formulas for metric measure invariants and phase transition
   property},
   journal={Math. Z.},
   volume={280},
   date={2015},
   number={3-4},
   pages={759--782},
   issn={0025-5874},
}

\bib{MMG}{book}{
   author={Shioya, Takashi},
   title={Metric measure geometry},
   series={IRMA Lectures in Mathematics and Theoretical Physics},
   volume={25},
   note={Gromov's theory of convergence and concentration of metrics and
   measures},
   publisher={EMS Publishing House, Z\"urich},
   date={2016},
   pages={xi+182},
   isbn={978-3-03719-158-3},
}

\bib{St}{article}{
   author={Sturm, Karl-Theodor},
   title={The space of spaces: curvature bounds and gradient flows on the space of metric measure spaces},
   note={arXiv:1208.0434v2, to appear in Memoirs AMS},
}

\end{biblist}
\end{bibdiv}
\end{document}